\numberwithin{equation}{section}
\numberwithin{figure}{section}
\theoremstyle{plain}
  \theoremstyle{definition}
  \theoremstyle{plain}
  \theoremstyle{plain}
  \theoremstyle{plain}
  \theoremstyle{plain}
  \newtheorem*{cor*}{\protect\corollaryname}
  \theoremstyle{remark}
  \theoremstyle{definition}
\newcommand{\R}{\mathbb{R}}
\newcommand{\M}{\mathcal{M}}
\renewcommand{\H}{\mathcal{H}}
 \newcommand{\tn}{\textnormal}
\newcommand{\po}{\partial} 
\newcommand{\var}{\varepsilon}
 \renewcommand{\O}{{\mathbf O}}
 \newcommand{\supp}{\text{\rm supp}\,}
 \renewcommand{\div}{\textnormal{div }}
 \renewcommand{\supp}{\text{\rm supp}\,}
\renewcommand{\O}{\Omega} \newcommand{\Lip}{\text{\rm Lip}}
\theoremstyle{plain}
\newtheorem{theorem}{Theorem}[section]
\newtheorem{lemma}[theorem]{Lemma}
\newtheorem{definition}[theorem]{Definition}
\newtheorem{proposition}[theorem]{Proposition}
\newtheorem{corollary}[theorem]{Corollary}
\newtheorem{remark}[theorem]{Remark}
\newcommand{\rnm}{\mathbb{R}^{n}}
\newcommand{\norm}[1]{\left\Vert#1\right\Vert}
\renewcommand{\div}{\text{\sl div\,}}
\newcommand{\FF}{{\boldsymbol F}}
\newcommand{\ve}{{\varepsilon}}
\def\rightangle{\vcenter{\hsize5.5pt
    \hbox to5.5pt{\vrule height7pt\hfill}
    \hrule}}
\def\rtangle{\mathrel{\rightangle}}
\def\intave#1{\int_{#1}\hbox{\llap{$\raise2.3pt\hbox{\vrule
height.9pt width7pt}\phantom{\scriptstyle{#1}}\mkern-2mu$}}}
\def\intav#1{\mathchoice
          {\mathop{\vrule width 9pt height 3 pt depth -2.6pt
                  \kern -9pt \intop}\nolimits_{\kern -6pt#1}}%
          {\mathop{\vrule width 5pt height 3 pt depth -2.6pt
                  \kern -6pt \intop}\nolimits_{#1}}%
          {\mathop{\vrule width 5pt height 3 pt depth -2.6pt
                  \kern -6pt \intop}\nolimits_{#1}}%
          {\mathop{\vrule width 5pt height 3 pt depth -2.6pt
                  \kern -6pt \intop}\nolimits_{#1}}}
\def\intav#1{\vint_{#1}}
\newcommand{\bnu}{\boldsymbol\nu}
\renewcommand{\div}{\text{\sl div\,}}
  \providecommand{\corollaryname}{Corollary}
  \providecommand{\definitionname}{Definition}
  \providecommand{\examplename}{Example}
  \providecommand{\lemmaname}{Lemma}
  \providecommand{\propositionname}{Proposition}
  \providecommand{\remarkname}{Remark}
\providecommand{\theoremname}{Theorem}
\providecommand{\corollaryname}{Corollary}
  \providecommand{\definitionname}{Definition}
  \providecommand{\examplename}{Example}
  \providecommand{\lemmaname}{Lemma}
  \providecommand{\propositionname}{Proposition}
  \providecommand{\remarkname}{Remark}
\providecommand{\theoremname}{Theorem}
\providecommand{\corollaryname}{Corollary}
  \providecommand{\definitionname}{Definition}
  \providecommand{\examplename}{Example}
  \providecommand{\lemmaname}{Lemma}
  \providecommand{\propositionname}{Proposition}
  \providecommand{\remarkname}{Remark}
\providecommand{\theoremname}{Theorem}
  \providecommand{\corollaryname}{Corollary}
  \providecommand{\definitionname}{Definition}
  \providecommand{\examplename}{Example}
  \providecommand{\lemmaname}{Lemma}
  \providecommand{\propositionname}{Proposition}
  \providecommand{\remarkname}{Remark}
\providecommand{\theoremname}{Theorem}
\begin{document}

\title[Measures in the dual of  $BV$]{Characterizations of signed measures in the dual of $BV$\\ and related isometric isomorphisms}

\author{Nguyen Cong Phuc and Monica Torres}

\address{{[}Nguyen Cong Phuc{]}{ Department of Mathematics, Lousiana State University, 303 Lockett Hall, Baton Rouge, LA 70803, USA.
http://www.math.lsu.edu/\~{}pcnguyen} }

\email{pcnguyen@math.lsu.edu}

\address{{[}Monica Torres{]}{ Department of Mathematics\\
 Purdue University, 150 N. University Street, West Lafayette, IN 47907-2067,
USA. http://www.math.purdue.edu/\~{}torres} }

\email{torres@math.purdue.edu}

\keywords{BV space, dual of BV, measures}

\begin{abstract}
We characterize all (signed) measures in $BV_{\frac{n}{n-1}}(\R^n)^*$, where $BV_{\frac{n}{n-1}}(\R^n)$ is defined as the space of all functions $u$ in $L^{\frac{n}{n-1}}(\R^n)$ such that $Du$ is a finite vector-valued measure. We also show that $BV_{\frac{n}{n-1}}(\R^n)^*$ and $BV(\R^n)^*$ are isometrically isomorphic, where $BV(\R^n)$ is defined as the space of all functions $u$ in $L^{1}(\R^n)$ such that $Du$ is a finite vector-valued measure. As a consequence of our characterizations,  an old issue raised in Meyers-Ziemer  \cite{MZ} is resolved by constructing a locally integrable function $f$ such that $f$ belongs to $BV(\R^n)^{*}$ but $|f|$ does not. Moreover, we  show that  the  measures in $BV_{\frac{n}{n-1}}(\R^n)^*$ coincide with the measures in $\dot W^{1,1}(\R^n)^*$, the dual of the homogeneous Sobolev space $\dot W^{1,1}(\R^n)$, in the sense of isometric isomorphism. For  a  bounded open set $\O$ with Lipschitz boundary, we characterize the measures in the dual space $BV_0(\O)^*$. One of the goals of this paper is to make precise the definition of $BV_0(\O)$, which is the space of functions of bounded variation with zero trace on the boundary of $\O$. We show that the measures in $BV_0(\O)^*$ coincide with the measures in $W^{1,1}_0(\O)^*$. Finally, the class of finite measures in $BV(\O)^*$ is also characterized. 

\begin{comment}
For  a  bounded open set $\O$ with Lipschitz boundary, we characterize the measures in the dual space $BV_0(\O)^*$. One of the goals of this paper is to make precise the definition of $BV_0(\O)$, which is the space of functions of bounded variation with zero trace on the boundary of $\O$. This result extends,  to bounded domains,  a previous characterization of the signed measures in $\R^n$ that belong to $BV(\R^n)^*$ obtained in Phuc-Torres \cite{PT} and a characterization of the positive measures in $BV(\R^n)^*$ obtained in Meyers-Ziemer \cite{MZ}. We also study the space $BV_{\frac{n}{n-1}}(\R^n)$, defined as the space of all functions $u$ in $L^{\frac{n}{n-1}}(\R^n)$ such that $Du$ is a finite vector-valued measure. We show that $BV(\R^n)^*$ and $BV_{\frac{n}{n-1}}(\R^n)^*$ are isometrically isomorphic. As a consequence of our characterizations,  an old issue raised in Meyers-Ziemer  \cite{MZ} is resolved by constructing a locally integrable function $f$ such that $f$ belongs to $BV(\R^n)^{*}$ but $|f|$ does not.
Moreover, we  show that  the  measures in $BV_{\frac{n}{n-1}}(\R^n)^*$ coincide with the measures in $\dot W^{1,1}(\R^n)^*$, the dual of the homogeneous Sobolev space $\dot W^{1,1}(\R^n)$, and that the measures in $BV_0(\O)^*$ coincide with the measures in $W^{1,1}_0(\O)^*$. Finally, the class of finite measures in $BV(\O)^*$ is also characterized.
\end{comment}
 \end{abstract}
\maketitle

\section{Introduction}

It is a challenging problem in geometric measure theory to give a full characterization of the dual of $BV$, the space of functions of bounded variation. Meyers and Ziemer characterized in \cite{MZ} 
the positive measures in $\R^n$ that belong to the dual of $BV(\R^n)$. They defined $BV(\R^n)$ as the space of all functions in $L^1(\R^n)$ whose distributional gradient is a finite vector-measure in $\R^n$ with norm given by
\begin{equation*}
\norm{u}_{BV(\R^n)}= \norm{D u}(\R^n).
\end{equation*}
They showed that the positive measure $\mu$ belongs to $BV(\R^n)^*$ if and only if $\mu$ satisfies the condition
\begin{equation*}
%\label{conbolas}
 \mu(B(x,r)) \leq Cr^{n-1}
\end{equation*}
for every open ball $B(x,r) \subset \R^n$ and $C=C(n)$. Besides the classical paper by Meyers and Ziemer, we  refer the interested reader to the paper by De Pauw \cite{Thierrysbv}, where the author analyzes $SBV^*$, the dual of the space of special functions of bounded variation.

In Phuc-Torres \cite{PT} we showed that there is a connection between the problem of characterizing $BV^*$ and the study of the solvability of the equation $\div \FF =T$. Indeed, we showed that the (signed) measure $\mu$ belongs to $BV(\R^n)^*$ if and only if there exists a bounded vector field $\FF \in L^\infty(\R^n,\R^n)$ such that $\div \FF= \mu$. Also, we showed that $\mu$ belongs to $BV(\R^n)^*$ if and only if
\begin{equation}
\label{consmoothsets}
 |\mu (U) | \leq C\, \H^{n-1}(\partial U)
\end{equation}
for any open (or closed) set $U \subset \R^n$ with smooth boundary. The solvability of the equation $\div \FF= T$, in various spaces of functions, has been studied in Bourgain-Brezis \cite{BB}, De Pauw-Pfeffer \cite{PP}, De Pauw-Torres \cite{DPT} and Phuc-Torres \cite{PT} (see also Tadmor \cite{Tadmor}).

In De Pauw-Torres \cite{DPT}, another $BV$-type space was considered, the space $BV_{\frac{n}{n-1}}(\R^n)$, defined as the 
space of all functions $u \in L^{\frac{n}{n-1}}(\R^n)$ such that $D u$, the distributional gradient of $u$, is a finite vector-measure in $\R^n$. A closed subspace of  $BV_{\frac{n}{n-1}}(\R^n)^*$, which is a Banach space denoted as $CH_0$, was characterized in \cite{DPT} and it was proven that $T \in CH_0$ if and only if $T= \div \FF$, for a continuous vector field $\FF  \in C(\R^n,\R^n)$ vanishing at infinity. 
 
In this paper we continue the analysis of $BV(\R^n)^*$ and $BV_{\frac{n}{n-1}}(\R^n)^*$. We show that  $BV(\R^n)^*$ and $BV_{\frac{n}{n-1}}(\R^n)^*$ are isometrically isomorphic (see Corollary \ref{iso-R}). We also show that  the  measures in $BV_{\frac{n}{n-1}}(\R^n)^*$ coincide with the measures in  $\dot W^{1,1}(\R^n)^*$, the dual of the homogeneous Sobolev space $\dot W^{1,1}(\R^n)$ (see Theorem \ref{nuevo}),  in the sense of isometric isomorphism. We remark that the space $\dot W^{1,1}(\R^n)^*$ is denoted as the $G$ space in image processing (see Meyer \cite{Meyer}), and that it plays a key role in modeling the structured component of an image.

It is obvious that if $\mu$ is a locally finite signed Radon measure then $\norm{\mu}\in BV(\R^n)^*$ implies that $\mu\in BV(\R^n)^*$. The converse was unknown to  Meyers and Ziemer as they raised this issue in their classical paper \cite[page 1356]{MZ}. In Section \ref{MZ-ex}, 
we show that the converse does not hold true in general by constructing a locally integrable function $f$ such that $f\in BV(\R^n)^{*}$ but $|f|\not \in BV(\R^n)^{*}$.

 In this paper we also study these characterizations in bounded domains. Given a bounded open set $\O$ with Lipschitz boundary, we consider  the space $BV_0(\Omega)$ defined as the space of functions of bounded variation with zero trace on $\partial \O$. One of the goals of this paper is to make precise the definition of this space (see Theorem \ref{sonlomismo}) . We then characterize all (signed) measures in $\O$ that belong to $BV_0(\Omega)^*$ . We show that a locally finite signed measure $\mu$ belongs to  $BV_0(\Omega)^*$ if and only if \eqref{consmoothsets} holds for any smooth open (or closed) set $U \subset \subset \O$, and if and only if $\mu =\div \FF$ for a vector field $\FF \in L^\infty(\O,\R^n)$  (see Theorem \ref{superestrellabounded}). Moreover, we show that the measures in $BV_0(\O)^*$ coincide with the measures in $W^{1,1}_0(\O)^*$ (see Theorem \ref{nuevo1}), in the sense of isometric isomorphism.

 In the case of $BV(\O)$, the space of functions of bounded variation in  a   bounded open set $\O$ with Lipschitz boundary
(but without the condition of having zero trace on $\partial\O$), we shall restrict 
our attention only to  measures in $BV(\O)^*$ with bounded total variation in $\O$, i.e., finite measures.
This is in a sense natural since any {\it positive} measure that belongs to $BV(\O)^*$ must be finite due to the fact that the function $1\in BV(\O)$.  We show that  a finite measure $\mu$ belongs to $BV(\O)^*$ if and only if \eqref{consmoothsets} holds for every smooth open set $U \subset \subset \R^n$,
where $\mu$ is extended by zero to $\R^n\setminus\O$ (see Theorem \ref{resultforbounded}).

\section{Functions of bounded variation}
In this section we define all the spaces that will be relevant in this paper.
\begin{definition}
Let $\O$ be any open set. The space $\M(\O)$ consists of all finite (signed) Radon measures $\mu$ in $\O$; that is, the total variation of $\mu$, denoted as $\norm{\mu}$, satisfies  $\norm{\mu}(\O) < \infty$. The space $\M_{loc}(\O)$ consists of all locally finite  Radon measures $\mu$ in $\O$; that is, $\norm{\mu}(K) < \infty$ for every compact set $K \subset \O$. 
\end{definition}

Note here that  $\M_{loc}(\O)$ is identified with the dual of the locally convex space
$C_c(\O)$ (the space of continuous real-valued functions with compact support in $\O$) (see \cite{Di}), and thus it is a real vector space. For  $\mu \in \M_{loc}(\O)$,
it is not required that either the positive part or the negative part of $\mu$ has finite total variation in $\O$.

\begin{definition}
Let $\O$ be any open set. The space of functions of bounded variation, denoted as $BV(\O)$, is defined as the space of all functions $u \in L^1(\O)$ such that  the distributional gradient $Du$ is a finite vector-valued measure in $\O$. The space $BV(\O)$ is a Banach space with the norm
\begin{equation}
\label{normaenbv}
\norm{u}_{BV(\O)}= \norm{u}_{L^1(\O)} + \norm{D u} (\O),
\end{equation}
where $\norm{D u} (\O)$ denotes the total variation of the vector-valued measure $Du$ over $\O$. For the case when $\O=\R^n$  we will equip $BV(\R^n)$ with the homogeneous norm given by
\begin{equation}
\label{normaenbvtodoelespacio}
\norm{u}_{BV(\R^n)}= \norm{D u} (\R^n).
\end{equation}
Another $BV$-like space is $BV_{\frac{n}{n-1}}(\R^n)$, defined as the space of all functions in $L^{\frac{n}{n-1}}(\R^n)$ such that $Du$ is a finite vector-valued measure. The space $BV_{\frac{n}{n-1}}(\R^n)$ is a Banach space when equipped with the norm
\begin{equation*}
%\label{normaenbvtodoelespacio2}
\norm{u}_{BV_{\frac{n}{n-1}}(\R^n)}= \norm{D u} (\R^n).
\end{equation*}  
\end{definition}

\begin{remark}
$BV(\R^n)$ is not a Banach space under the norm \eqref{normaenbvtodoelespacio}. Also, we have 
$$\norm{D u} (\O)=\sup\left\{ \int_\O u \, \div \varphi dx: \varphi\in C_c^1(\O) \text{ and } |\varphi(x)|\leq 1\, \forall x\in\O\right\},$$
where $\varphi=(\varphi_1, \varphi_2, ..., \varphi_n)$  and $|\varphi(x)|=(\varphi_1(x)^2 + \varphi_2(x)^2 + \cdots +\varphi_n(x)^2)^{1/2}$. In what follows, we shall also write $\int_\O |Du|$ instead of  $\norm{D u} (\O)$.
\end{remark}

We will use the following Sobolev's inequality for functions in $BV(\R^n)$  whose proof can be found in \cite[Theorem 3.47]{afp}:

\begin{theorem}
\label{mesorprendi}
Let $u \in BV(\R^n)$. Then
\begin{equation}
\label{muyutil}
\norm{u}_{L^{\frac{n}{n-1}}(\R^n)} \leq C(n) \norm{Du}(\R^n).
\end{equation}
\end{theorem}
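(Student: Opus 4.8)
The last statement above is the Gagliardo--Nirenberg--Sobolev inequality for $BV$ functions, and the plan is to reduce it to the classical case of smooth compactly supported functions and then pass to the limit along a mollified, truncated approximation. \textbf{Step 1 (the smooth case).} First I would establish that every $u\in C_c^1(\R^n)$ satisfies
\[
\norm{u}_{L^{\frac{n}{n-1}}(\R^n)}\le C(n)\int_{\R^n}|\grad u|\,\di x .
\]
Writing $|u(x)|\le\int_{\R}|\partial_i u|\,\di x_i$ for each $i=1,\dots,n$, multiplying these $n$ inequalities, raising the product to the power $\frac{1}{n-1}$, and then integrating one coordinate at a time with the generalized Hölder inequality (all exponents equal to $n-1$), one arrives at $\int_{\R^n}|u|^{\frac{n}{n-1}}\,\di x\le\prod_{i=1}^n\bigl(\int_{\R^n}|\partial_i u|\,\di x\bigr)^{\frac{1}{n-1}}$; the arithmetic--geometric mean inequality together with $\sum_i|\partial_i u|\le\sqrt{n}\,|\grad u|$ then gives the displayed bound with, say, $C(n)=n^{-1/2}$ (the precise constant is irrelevant here).

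\textbf{Step 2 (regularization with compact support).} Given $u\in BV(\R^n)$, so that $u\in L^1(\R^n)$ and $Du$ is a finite measure, I would set $u_\varepsilon=(u*\rho_\varepsilon)\,\eta_\varepsilon$, where $\rho_\varepsilon$ is a standard mollifier and $\eta_\varepsilon\in C_c^\infty(\R^n)$ equals $1$ on $B(0,1/\varepsilon)$, is supported in $B(0,2/\varepsilon)$, and satisfies $|\grad\eta_\varepsilon|\le C\varepsilon$. Then $u_\varepsilon\in C_c^\infty(\R^n)$, $u_\varepsilon\to u$ in $L^1(\R^n)$ (hence, along a subsequence, a.e.), and since $\grad u_\varepsilon=\bigl((Du)*\rho_\varepsilon\bigr)\eta_\varepsilon+(u*\rho_\varepsilon)\grad\eta_\varepsilon$ and convolution does not increase total variation, $\int_{\R^n}|\grad u_\varepsilon|\,\di x\le\norm{Du}(\R^n)+C\varepsilon\norm{u}_{L^1(\R^n)}$; in particular $\limsup_{\varepsilon\to0}\int_{\R^n}|\grad u_\varepsilon|\,\di x\le\norm{Du}(\R^n)$.

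\textbf{Step 3 (passage to the limit).} Applying Step 1 to $u_\varepsilon$ and letting $\varepsilon\to0$ along the a.e.-convergent subsequence, Fatou's lemma yields
\[
\norm{u}_{L^{\frac{n}{n-1}}(\R^n)}\le\liminf_{\varepsilon\to0}\norm{u_\varepsilon}_{L^{\frac{n}{n-1}}(\R^n)}\le C(n)\limsup_{\varepsilon\to0}\int_{\R^n}|\grad u_\varepsilon|\,\di x\le C(n)\norm{Du}(\R^n),
\]
which is \eqref{muyutil}. The one delicate point is Step 2: producing \emph{compactly supported} smooth approximants whose $L^1$-gradient norms converge up to $\norm{Du}(\R^n)$. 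The truncation is what forces the error term $C\varepsilon\norm{u}_{L^1}$, and it is essential that $u\in L^1(\R^n)$ (which is built into the definition of $BV(\R^n)$ used here) for this term to vanish; without compact support one would instead have to justify the slicing argument of Step 1 directly for $C^\infty\cap L^1$ functions with $\grad u\in L^1$. A self-contained alternative that bypasses approximation is to combine the coarea formula $\norm{Du}(\R^n)=\int_{\R}P(\{u>t\})\,\di t$, the isoperimetric inequality, and the layer-cake representation of $\norm{u}_{L^{\frac{n}{n-1}}(\R^n)}$, but that route presupposes those two tools.
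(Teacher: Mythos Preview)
Your argument is correct. The paper does not actually prove this statement: it merely quotes the inequality and refers the reader to \cite[Theorem~3.47]{afp}. So there is no ``paper's own proof'' to compare against beyond that citation.

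For what it is worth, the argument in \cite{afp} (and in most textbook treatments of the $BV$ Sobolev inequality) goes through the coarea formula and the isoperimetric inequality, i.e.\ exactly the alternative route you sketch in your final paragraph. Your main approach---Gagliardo--Nirenberg for $C_c^1$ functions, then mollify-and-cut-off approximation controlled by $\norm{u}_{L^1(\R^n)}$, then Fatou---is a perfectly legitimate and more elementary substitute: it avoids having to develop the coarea formula and the isoperimetric inequality as prerequisites, at the cost of not producing the sharp constant. The only point worth flagging is that the cutoff error in Step~2 genuinely requires $u\in L^1(\R^n)$, which you correctly note; this is precisely why the paper later (Theorem~\ref{density}) has to work harder, using H\"older's inequality on annuli, to get the analogous density statement in $BV_{\frac{n}{n-1}}(\R^n)$ where only $u\in L^{\frac{n}{n-1}}$ is available.
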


Inequality \eqref{muyutil} immediately implies the following continuous embedding
\begin{equation}
\label{inmerso}
BV(\R^n) \hookrightarrow BV_{\frac{n}{n-1}}(\R^n).
\end{equation}

We recall that the standard Sobolev space $W^{1,1}(\O)$ is defined as the space of all functions $u \in L^1(\O)$ such that $Du \in L^1(\O)$. The Sobolev space $W^{1,1}(\O)$ is a Banach space with the norm
\begin{equation}
\label{normaenw11}
\norm{u}_{W^{1,1}(\O)}= \norm{u}_{L^1(\O)} + \norm{D u} _{L^1(\O)}= \int_{\O} \left[|u| +(|D_1u|^2 + |D_2u|^2 +\dots+ |D_nu|^2)^{\frac{1}{2}}\right] dx.
\end{equation}

However, we will often refer to the following homogeneous Sobolev space. Hereafter, we let $C_c^{\infty}(\O)$ 
denote the space of smooth functions with compact support in a general  open set $\O$.
\begin{definition}
\label{homogeneous}
Let $\dot{W}^{1,1}(\R^n)$ denote the space of all functions $ u \in L^{\frac{n}{n-1}}(\R^n)$ such that $Du \in L^1(\R^n)$. Equivalently, the space $\dot{W}^{1,1}(\R^n)$ can also be defined as the closure of $C_c^{\infty}(\R^n)$ in $BV_{\frac{n}{n-1}}(\R^n)$ (i.e., in the norm $\norm{D u}_{L^1(\R^n)})$. Thus, $u \in \dot{W}^{1,1}(\R^n)$ if and only if there exists a sequence $u_k \in C_c^{\infty}(\R^n)$ such that $\int_{\R^n}|D(u_k-u)|dx=0$, and moreover,
$$\dot{W}^{1,1}(\R^n) \hookrightarrow BV_{\frac{n}{n-1}}(\R^n).$$ 
\end{definition}

\begin{definition}
Given a bounded open set $\O$, we say that the boundary $\partial\O$ is Lipschitz if for each $x\in \partial\O$, there exist $r>0$ and a Lipschitz mapping 
$h:\R^{n-1}\rightarrow \R$ such that, upon rotating and relabeling the coordinate axes if necessary, we have
$$\O\cap B(x,r)=\{y=(y_1, \dots, y_{n-1}, y_n): h(y_1, \dots, y_{n-1})<y_n\}\cap B(x,r).$$
\end{definition}

\begin{remark}
Let $\O$ be a bounded open set with Lipschitz boundary. 
We denote by  $W^{1,1}_0(\O)$ the Sobolev space consisting of all functions in $W^{1,1}(\O)$  with zero trace on $\partial \O$. Then it is well-known
that $C_c^{\infty}(\O)$ is dense in $W^{1,1}_0(\O)$.
One of the goals of this paper is to make precise the definition of $BV_0(\O)$, the space of all functions in $BV(\O)$ with zero trace on $\partial \O$ (see Theorem \ref{sonlomismo}). In this paper we equip the two spaces, $BV_0(\O)$ and $W_0^{1,1}(\O)$, with the equivalent norms (see Theorem \ref{sobolevbv0})   to \eqref{normaenbv} and \eqref{normaenw11}, respectively, given by
\begin{equation*}
\norm{u}_{BV_0(\O)}= \norm{D u} (\O), \quad \text{and} \quad \norm{u}_{W^{1,1}_0(\O)}= \int_{\O} |Du| dx.
\end{equation*}
\end{remark}

\begin{definition}
For any open set $\Omega$, we let  $BV_c(\O)$ denote the space of functions
 in $BV(\O)$ with  compact support in $\O$. Also, $BV^{\infty}(\O)$ and $BV_{0}^{\infty}(\O)$ denote the space of bounded functions in $BV(\O)$ and $BV_0(\O)$, respectively. Finally, 
$BV_c^{\infty}(\O)$ is the space of all bounded functions  in $BV({\O})$ with compact support in $\O$. 
\end{definition}

We will use the following result (see \cite[Proposition 1.13]{Giusti}). We include the  proof here for the sake of completeness. 
\begin{lemma}
\label{muyimportante-1}
Suppose $\{u_k\}$ is a sequence in $BV(\O)$ such that $u_k \to u$ in $L^1_{loc}(\O)$ and
\begin{equation}
\label{loquese}
\lim_{k \to \infty} \int_{\O}|Du_k| =\int_{\O}|Du|.
\end{equation}
Then for every open set $A \subset \O$,
\begin{equation*}
\int_{\overline{A} \cap \O}|Du| \geq \limsup_{k \to \infty} \int_{\overline{A} \cap \O}|Du_k|.
\end{equation*}
In particular, if $\int_{\partial A \cap \O}|Du|=0$, then
\begin{equation}
\label{clave}
\int_{A}|Du| = \lim_{k \to \infty} \int_{A}|Du_k|.
\end{equation}
\end{lemma}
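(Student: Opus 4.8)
The plan is to prove the lower semicontinuity inequality $\int_{\overline A\cap\O}|Du|\ge\limsup_k\int_{\overline A\cap\O}|Du_k|$ first, and then deduce \eqref{clave} by a complementation argument using the global convergence \eqref{loquese}. For the first part, I would invoke lower semicontinuity of the total variation with respect to $L^1_{\loc}$ convergence: since $u_k\to u$ in $L^1_{\loc}(\O)$, for \emph{every} open set $V\subset\O$ we have $\int_V|Du|\le\liminf_k\int_V|Du_k|$ (this is standard, obtained by testing against $\varphi\in C_c^1(V)$ with $|\varphi|\le1$ and passing to the limit in $\int u_k\,\div\varphi\,dx$). Apply this with $V=\O\setminus\overline A$, which is open: $\int_{\O\setminus\overline A}|Du|\le\liminf_k\int_{\O\setminus\overline A}|Du_k|$.

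Now I combine this with \eqref{loquese}. Write $\int_\O|Du_k| = \int_{\overline A\cap\O}|Du_k| + \int_{\O\setminus\overline A}|Du_k|$, and similarly for $u$ (the sets $\overline A\cap\O$ and $\O\setminus\overline A$ partition $\O$ up to the $|Du|$-null issue on $\partial A$—actually they partition $\O$ exactly as Borel sets since $\overline A$ is closed). Then
\[
\int_{\overline A\cap\O}|Du|
=\int_\O|Du|-\int_{\O\setminus\overline A}|Du|
\ge\lim_k\int_\O|Du_k|-\liminf_k\int_{\O\setminus\overline A}|Du_k|
=\limsup_k\left(\int_\O|Du_k|-\int_{\O\setminus\overline A}|Du_k|\right)
=\limsup_k\int_{\overline A\cap\O}|Du_k|,
\]
using \eqref{loquese} to turn the $\lim$ of the total mass into a genuine limit, which is what lets $-\liminf$ combine with it into $\limsup$ of the difference. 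This gives the displayed inequality.

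For the ``in particular'' statement, assume $\int_{\partial A\cap\O}|Du|=0$. Then $\int_{\overline A\cap\O}|Du|=\int_{A}|Du|$ and $\int_{\O\setminus\overline A}|Du|=\int_{\O\setminus A}|Du|$, since the difference is supported on $\partial A\cap\O$, which is $|Du|$-null. Applying lower semicontinuity on the open set $A$ gives $\int_A|Du|\le\liminf_k\int_A|Du_k|$; applying the inequality just proved (with $\overline A\cap\O$ replaced by $A$, legitimate because the two differ by a $|Du|$-null set) gives $\int_A|Du|\ge\limsup_k\int_{\overline A\cap\O}|Du_k|\ge\limsup_k\int_A|Du_k|$. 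The two bounds together force $\lim_k\int_A|Du_k|=\int_A|Du|$.

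The only delicate point is the interchange of partitioning and limits: one must be careful that the lower semicontinuity statement is available for the specific (open) set $\O\setminus\overline A$, not just for $A$ itself, and that the hypothesis \eqref{loquese} is genuinely a limit (not merely $\limsup$ or $\liminf$), since it is precisely this that upgrades the additive decomposition into the desired $\limsup$ bound. No compactness or rectifiability structure of $\partial A$ is needed; everything is soft functional analysis plus the definition of total variation via duality.
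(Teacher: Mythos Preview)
Your proof is correct and follows essentially the same approach as the paper's: both apply lower semicontinuity on the open complement $B=\O\setminus\overline A$, combine this with the global convergence \eqref{loquese} via the additive decomposition $\int_\O=\int_{\overline A\cap\O}+\int_B$, and use the elementary $\limsup$/$\liminf$ manipulation (your identity $\lim a_k-\liminf b_k=\limsup(a_k-b_k)$ for convergent $a_k$ is equivalent to the paper's use of $\limsup(x_k+y_k)\ge\limsup x_k+\liminf y_k$). The deduction of \eqref{clave} is also the same, combining lower semicontinuity on $A$ with the just-proved upper bound and the monotonicity $\int_{\overline A\cap\O}|Du_k|\ge\int_A|Du_k|$.
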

\begin{proof}
Consider the open set $B= \O \setminus \overline{A}$. Since $u_k \to u$ in $L^1_{loc}(\O)$, by the lower semicontinuity property we have
\begin{equation}
\label{lower}
\int_{A}|Du| \leq \liminf_{k \to \infty} \int_{A}|Du_k|, \tn{ and } \int_{B}|Du| \leq \liminf_{k \to \infty} \int_{B}|Du_k|.
\end{equation}
On the other hand,
\begin{eqnarray}
\int_{\overline{A} \cap \O}|Du|+ \int_{B}|Du| &=& \int_{\O}|Du| \nonumber\\
 &=& \lim_{k \to \infty} \int_{\O}|Du_k|=\lim_{k \to \infty} \left(\int_{\overline{A} \cap \O}|Du_k| + \int_{B}|Du_k|\right), \tn{ by } \eqref{loquese}\nonumber\\
 &=& \limsup_{k \to \infty} \left(\int_{\overline{A} \cap \O}|Du_k| + \int_{B}|Du_k|\right)\nonumber\\
 &\geq& \limsup_{k \to \infty} \int_{\overline{A} \cap \O}|Du_k| + \liminf_{k \to \infty} \int_{B}|Du_k|\nonumber\\ 
 &\geq& \limsup_{k \to \infty} \int_{\overline{A} \cap \O}|Du_k| +  \int_{B}|Du|, \tn{ by }  \eqref{lower}, \nonumber
\end{eqnarray}
and hence
\begin{equation*}
\int_{\overline{A} \cap \O}|Du| \geq \limsup_{k \to \infty} \int_{\overline{A} \cap \O}|Du_k|.
\end{equation*}
In particular, if $\int_{\partial A \cap \O}|Du| =0$ then we obtain from the last inequality
\begin{equation*}
\int_{A} |Du| =\int_{\overline{A} \cap \O}|Du| \geq \limsup_{k \to \infty} \int_{\overline{A} \cap \O}|Du_k| \geq \limsup_{k \to \infty} \int_{A}|Du_k| \geq \liminf_{k \to \infty} \int_{A}|Du_k|,
\end{equation*}
and since, by \eqref{lower},
\begin{equation*}
\int_{A} |Du| \leq  \liminf_{k \to \infty} \int_{A}|Du_k|\leq \limsup_{k \to \infty} \int_{A}|Du_k|,
\end{equation*}
clearly \eqref{clave} follows.
\end{proof}

The following theorem from functional analysis (see \cite[Theorem 1.7]{Reed} ) will be fundamental in this paper:

\begin{theorem}
\label{BLT}
Let $X$ be a normed linear space and $Y$ be a Banach space. Suppose $T:D \to Y$   is a bounded linear transformation, where
$D \subset X$ is a dense linear subspace. Then $T$ can be uniquely extended to a bounded linear transformation $\hat{T}$ from $X$ to $Y$.  In addition, the operator norm of $T$ is $c$ if and only if the norm of $\hat{T}$ is $c$.
\end{theorem}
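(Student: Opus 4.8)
The plan is the standard extension-by-density argument, in which the only essential use of the hypotheses is that completeness of $Y$ produces the values of $\hat T$. First I would fix $x\in X$ and, using the density of $D$, choose a sequence $\{x_k\}\subset D$ with $x_k\to x$ in $X$. Since $T$ is bounded, $\norm{Tx_k-Tx_j}_Y\le \norm{T}\,\norm{x_k-x_j}_X$, and as $\{x_k\}$ is convergent (hence Cauchy) in $X$, the sequence $\{Tx_k\}$ is Cauchy in $Y$. Because $Y$ is a Banach space, $\{Tx_k\}$ converges, and I would set $\hat T x:=\lim_{k\to\infty}Tx_k$.

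Next I would verify this is well defined: if $\{x_k\}$ and $\{y_k\}$ are two sequences in $D$ both converging to $x$, then $x_k-y_k\to 0$, so $\norm{Tx_k-Ty_k}_Y\le \norm{T}\,\norm{x_k-y_k}_X\to 0$ and the two limits coincide. Linearity of $\hat T$ follows from the linearity of $T$ together with the continuity of vector-space operations: if $x_k\to x$ and $z_k\to z$ with $x_k,z_k\in D$, then $x_k+\alpha z_k\to x+\alpha z$, and passing to the limit in $T(x_k+\alpha z_k)=Tx_k+\alpha Tz_k$ gives $\hat T(x+\alpha z)=\hat T x+\alpha\hat T z$. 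Taking the constant sequence $x_k\equiv x$ for $x\in D$ shows $\hat T x=Tx$, i.e. $\hat T$ extends $T$.

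For the norm statement, from $\norm{Tx_k}_Y\le \norm{T}\,\norm{x_k}_X$ and the continuity of the norm I would pass to the limit to obtain $\norm{\hat T x}_Y\le \norm{T}\,\norm{x}_X$ for every $x\in X$, so $\hat T$ is bounded with $\norm{\hat T}\le\norm{T}$; conversely, since $\hat T|_D=T$ one has $\norm{\hat T}\ge \sup\{\norm{\hat T x}_Y:\ x\in D,\ \norm{x}_X\le 1\}=\norm{T}$, whence $\norm{\hat T}=\norm{T}$, which is precisely the asserted equivalence with constant $c$. Finally, for uniqueness, if $S:X\to Y$ is any bounded (hence continuous) linear map with $S|_D=T$, then $S$ and $\hat T$ are continuous and agree on the dense set $D$, so for $x\in X$ and $x_k\to x$ with $x_k\in D$ we get $Sx=\lim_k Sx_k=\lim_k Tx_k=\hat T x$. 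There is no genuine obstacle here: the single point requiring care is that completeness of $Y$ is indispensable for the existence of $\lim_k Tx_k$, and everything else is a routine limiting argument.
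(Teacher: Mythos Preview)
Your proof is correct and is the standard extension-by-density argument. The paper does not actually supply a proof of this theorem: it is stated as a known result from functional analysis and cited from Reed--Simon \cite[Theorem 1.7]{Reed}, so there is no paper proof to compare against, but what you have written is precisely the classical argument one finds in that reference.
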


The following formula will be important in this paper.
\begin{lemma}
\label{laformula}
Let $\mu \in \mathcal{M}_{loc}(\R^n)$ and  $f$ be a function such that $\int_{\R^n} |f| d\norm{\mu}<+\infty$. Then
\begin{equation*}
\int_{\R^n} f d\mu= \int_{0}^{\infty} \mu(\{f\geq t\})dt - \int_{-\infty}^{0} \mu(\{ f \leq t \})dt.
\end{equation*}
The same equality also holds if we replace the sets $\{f\geq t\}$ and $\{f\leq t\}$ by $\{f> t\}$ and $\{f< t\}$, respectively.
\end{lemma}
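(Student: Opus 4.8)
The plan is to reduce everything to the classical layer-cake (Cavalieri) formula for nonnegative functions against the positive measure $\norm{\mu}$, and then use the Hahn–Jordan decomposition $\mu=\mu^+-\mu^-$ to handle signs. First I would write $f=f^+-f^-$ and treat the two parts separately; by linearity of both sides it suffices to prove the identity for $f\ge 0$, in which case the claimed formula reduces to
\[
\int_{\R^n} f\, d\mu = \int_0^\infty \mu(\{f\ge t\})\, dt .
\]
The hypothesis $\int_{\R^n}|f|\,d\norm{\mu}<\infty$ guarantees that $f^+$ and $f^-$ are each $\norm{\mu}$-integrable, hence $\mu^+$- and $\mu^-$-integrable, so all integrals below are finite and absolutely convergent, and splitting $f=f^+-f^-$ introduces no $\infty-\infty$ ambiguity. (For the part coming from $f^-$, a change of variable $t\mapsto -t$ turns $\int_0^\infty\mu(\{f^-\ge t\})\,dt$ into $-\int_{-\infty}^0\mu(\{f\le t\})\,dt$, up to care about the set $\{f=0\}$, which contributes $0$ to the $dt$-integral since it is a single level.)

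For $f\ge 0$, I would apply the standard Cavalieri formula to the \emph{finite positive} measures $\mu^+$ and $\mu^-$ on the (finite-measure, after truncation) sets where $f$ lives: for a nonnegative measurable $g$ and a positive measure $\lambda$ with $\int g\,d\lambda<\infty$ one has $\int g\,d\lambda=\int_0^\infty \lambda(\{g\ge t\})\,dt=\int_0^\infty\lambda(\{g>t\})\,dt$, the two versions agreeing because $\{t:\lambda(\{g\ge t\})\ne\lambda(\{g>t\})\}$ is at most countable. Applying this with $\lambda=\mu^+$ and $\lambda=\mu^-$ and subtracting gives
\[
\int_{\R^n} f\, d\mu=\int_0^\infty \mu^+(\{f\ge t\})\,dt-\int_0^\infty\mu^-(\{f\ge t\})\,dt=\int_0^\infty \mu(\{f\ge t\})\,dt,
\]
where the last step uses $\mu(\{f\ge t\})=\mu^+(\{f\ge t\})-\mu^-(\{f\ge t\})$ and the fact that both terms are finite for a.e.\ $t$ (indeed for all $t>0$, since $\{f\ge t\}\subset\{|f|\ge t\}$ has finite $\norm{\mu}$-measure by Chebyshev). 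The same computation with $\{f\ge t\}$ replaced by $\{f>t\}$ goes through verbatim, proving the final assertion.

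The main obstacle — really the only subtlety — is bookkeeping with signs and with the measurability/finiteness of the level sets: one must be sure that $t\mapsto\mu(\{f\ge t\})$ is measurable (it is a difference of monotone functions of $t$) and integrable on $(0,\infty)$, and that no cancellation of the form $\infty-\infty$ occurs. All of this is controlled by the single hypothesis $\int|f|\,d\norm{\mu}<\infty$ via Chebyshev's inequality $\norm{\mu}(\{|f|\ge t\})\le t^{-1}\int|f|\,d\norm{\mu}$, which also bounds $\int_0^\infty\norm{\mu}(\{|f|\ge t\})\,dt=\int|f|\,d\norm{\mu}<\infty$ and thereby dominates the $dt$-integrals of $\mu^\pm(\{f\ge t\})$. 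Once these finiteness points are in place, the argument is a routine application of the layer-cake formula term by term.
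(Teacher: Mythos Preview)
Your proposal is correct and follows essentially the same route as the paper: decompose $f=f^+-f^-$, apply the layer-cake formula to each piece, and change variables $t\mapsto -t$ to obtain the integral over $(-\infty,0)$. The only difference is one of explicitness: the paper applies the identity $\int g\,d\mu=\int_0^\infty\mu(\{g\ge t\})\,dt$ directly for nonnegative $g$ and signed $\mu$ without further comment, whereas you justify that step by additionally decomposing $\mu=\mu^+-\mu^-$ and invoking the classical (positive-measure) Cavalieri formula, together with the finiteness checks via $\int|f|\,d\norm{\mu}<\infty$.
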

\begin{proof}
We write $f= f^{+}-f^{-}$, where $f^{+} \geq 0$ and $f^{-} \geq 0$ are the positive and negative parts of $f$. Then
\begin{eqnarray*}
\int_{\R^n} f d\mu &= & \int_{\R^n} (f^{+} - f^{-}) d\mu \\
   &= & \int_{0}^{\infty} \mu(\{f^{+}\geq t\})dt - \int_{0}^{\infty}\mu(\{ f^{-} \geq t \})dt \\
   &= & \int_{0}^{\infty} \mu(\{f \geq t \})dt - \int_{0}^{\infty}\mu(\{ -f  \geq t \})dt \\
   & =& \int_{0}^{\infty} \mu(\{f \geq t\})dt - \int_{0}^{\infty}\mu(\{ f \leq -t \})dt \\
   &=&  \int_{0}^{\infty} \mu(\{f \geq t\})dt - \int_{-\infty}^{0}\mu(\{ f \leq  s\})ds, \text{ by making the change of variables $t=-s$},    
\end{eqnarray*}
which is the desired result.
\end{proof}

\section{$BV_c^{\infty}(\R^n)$ is dense in $BV_{\frac{n}{n-1}}(\R^n)$}

\begin{theorem}\label{density}
Let $u\in BV_{\frac{n}{n-1}}(\R^n)$, $u\geq 0$, and $\phi_k\in C_c^\infty(\R^n)$ be
a nondecreasing sequence of
smooth functions satisfying: 
\begin{equation}
\label{parece1}
0 \leq \phi_k \leq 1,\, \phi_k\equiv 1 \textnormal{ on } B_k(0),\, \phi_k\equiv 0 \textnormal{ on } \R^n \setminus B_{2k}(0)
\textnormal{ and }|{D \phi_k}|\leq c/k.
\end{equation}
Then \begin{equation}\label{approxg}
\lim_{k\rightarrow\infty}\norm{(\phi_k u)- u}_{BV_{\frac{n}{n-1}}(\R^n)}=0,
\end{equation}
and for each fixed $k>0$ we have 
\begin{equation}\label{approxg-j}
\lim_{j\rightarrow\infty}\norm{(\phi_k u)\wedge j- \phi_k u}_{BV_{\frac{n}{n-1}}(\R^n)}=0.
\end{equation}

In particular, $BV_c^\infty(\R^n)$ is dense in $BV_{\frac{n}{n-1}}(\R^n)$.
\end{theorem}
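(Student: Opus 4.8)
The plan is to follow the classical two-step argument for the density of $BV_c^\infty$ in $BV(\R^n)$ --- first a cutoff by $\phi_k$, then a vertical truncation at level $j$ --- adapting only the cutoff estimate to the exponent $\tfrac{n}{n-1}$.

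First I would prove \eqref{approxg}. Since $u\in L^{n/(n-1)}(\R^n)\subset L^1_{\loc}(\R^n)$ and $Du$ is a finite measure, $u\in BV(B_R(0))$ for every $R>0$, so the product rule for $BV$ functions, applied on any ball containing $\supp\phi_k$, gives $\phi_k u\in BV_{\frac{n}{n-1}}(\R^n)$ with $D(\phi_k u)=\phi_k\,Du+u\,\nabla\phi_k\,dx$ as measures on $\R^n$ (membership of $\phi_k u$ in $L^{n/(n-1)}$ is clear from $0\le\phi_k\le1$, and $u\,\nabla\phi_k\,dx\in L^1$ by the estimate below). Hence $D(\phi_k u-u)=(\phi_k-1)\,Du+u\,\nabla\phi_k\,dx$, and
$$\norm{\phi_k u-u}_{BV_{\frac{n}{n-1}}(\R^n)}\le\int_{\R^n}|\phi_k-1|\,d\norm{Du}+\int_{\R^n}|u|\,|\nabla\phi_k|\,dx.$$
The first integral tends to $0$ by dominated convergence, since $|\phi_k-1|\le1$, $\norm{Du}$ is finite, and $\phi_k(x)\to1$ for every $x$ (because $\phi_k\equiv1$ on $B_k(0)$). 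For the second integral, which is where the exponent $\tfrac{n}{n-1}$ actually enters, I would use $|\nabla\phi_k|\le c/k$, the fact that $\nabla\phi_k$ is supported in $B_{2k}(0)\setminus B_k(0)$, and H\"older's inequality with conjugate exponents $\tfrac{n}{n-1}$ and $n$ to get
$$\int_{\R^n}|u|\,|\nabla\phi_k|\,dx\le\frac{c}{k}\int_{B_{2k}(0)\setminus B_k(0)}|u|\,dx\le\frac{c}{k}\,|B_{2k}(0)|^{1/n}\,\norm{u}_{L^{n/(n-1)}(\R^n\setminus B_k(0))}\le C\,\norm{u}_{L^{n/(n-1)}(\R^n\setminus B_k(0))},$$
where the point is that $|B_{2k}(0)|^{1/n}\le Ck$, so the factor $1/k$ is exactly absorbed. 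Since $u\in L^{n/(n-1)}(\R^n)$, the right-hand side tends to $0$ as $k\to\infty$, proving \eqref{approxg}.

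Next I would prove \eqref{approxg-j} for fixed $k$. Put $v:=\phi_k u\ge0$, so that $v\in BV_{\frac{n}{n-1}}(\R^n)$ with $\supp v\subset B_{2k}(0)$; also $v\wedge j\in BV_{\frac{n}{n-1}}(\R^n)$ since $0\le v\wedge j\le v$ and truncation preserves bounded variation. Because $v-(v\wedge j)=(v-j)^+$, we have $v\wedge j-v=-(v-j)^+$, hence $\norm{v\wedge j-v}_{BV_{\frac{n}{n-1}}(\R^n)}=\int_{\R^n}|D(v-j)^+|$. Since $\{(v-j)^+>t\}=\{v>j+t\}$ for $t>0$, the coarea formula for $BV$ functions yields
$$\int_{\R^n}|D(v-j)^+|=\int_0^\infty\mathcal{H}^{n-1}(\partial^*\{v>j+t\})\,dt=\int_j^\infty\mathcal{H}^{n-1}(\partial^*\{v>s\})\,ds,$$
while the coarea formula applied to $v$ itself shows $s\mapsto\mathcal{H}^{n-1}(\partial^*\{v>s\})$ is integrable on $(0,\infty)$; thus the last integral is the tail of a convergent integral and tends to $0$ as $j\to\infty$, giving \eqref{approxg-j}.

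Finally, for the density of $BV_c^\infty(\R^n)$, I would split an arbitrary $u\in BV_{\frac{n}{n-1}}(\R^n)$ as $u=u^+-u^-$; both $u^\pm\ge0$ lie in $BV_{\frac{n}{n-1}}(\R^n)$ (they are truncations of $\pm u$ with $0\le u^\pm\le|u|$), so \eqref{approxg} and \eqref{approxg-j} apply to each. Given $\varepsilon>0$, choose $k$ with $\norm{\phi_k u^\pm-u^\pm}_{BV_{\frac{n}{n-1}}(\R^n)}<\varepsilon$ and then $j$ with $\norm{(\phi_k u^\pm)\wedge j-\phi_k u^\pm}_{BV_{\frac{n}{n-1}}(\R^n)}<\varepsilon$; the functions $(\phi_k u^\pm)\wedge j$ are bounded by $j$, supported in $B_{2k}(0)$, and of bounded variation, so they and their difference $(\phi_k u^+)\wedge j-(\phi_k u^-)\wedge j$ belong to $BV_c^\infty(\R^n)$, and this difference is within $4\varepsilon$ of $u$ in the $BV_{\frac{n}{n-1}}$ norm. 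Letting $\varepsilon\to0$ gives the claim. The step I expect to require the most care is the bound on $\int_{\R^n}|u|\,|\nabla\phi_k|\,dx$: it is the only place where replacing $L^1$ by $L^{n/(n-1)}$ forces a change from the classical proof, and it succeeds precisely because the cutoff scaling $|\nabla\phi_k|\lesssim1/k$ balances the volume growth $|B_{2k}(0)|^{1/n}\lesssim k$ under H\"older with the sharp conjugate exponent $n$; the remaining steps are routine adaptations.
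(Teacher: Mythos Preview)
Your proof is correct and follows essentially the same route as the paper: the product rule plus H\"older with exponents $\tfrac{n}{n-1}$ and $n$ for \eqref{approxg}, the coarea formula for \eqref{approxg-j}, and the decomposition $u=u^+-u^-$ for the density statement. The only place the paper adds a bit more detail is in justifying $u^\pm\in BV_{\frac{n}{n-1}}(\R^n)$ by invoking the local estimate $\norm{Du^+}(A)\le\norm{Du}(A)$ from \cite[Theorem 3.96]{afp}, whereas you pass over this as a standard truncation fact.
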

\begin{proof}
 As $BV_{\frac{n}{n-1}}(\R^n)\subset BV_{loc}(\R^n)$, the product rule for $BV_{loc}$ functions gives that 
  $D(\phi_ku)=\phi_k D u + u D \phi_k$ (as measures) (see \cite[Proposition 3.1]{afp}) and hence $\phi_ku\in BV(\R^n) \subset BV_{\frac{n}{n-1}}(\R^n)$. Thus 
\begin{eqnarray}
&&\int_{\R^n}|D(u\phi_k-u)|= \int_{\R^n}|\phi_k Du-Du +uD\phi_k| \nonumber\\
&& \leq \int_{\R^n}|\phi_k-1||Du| + \int_{\R^n \cap \supp (D\phi_k)}|u||D\phi_k| \nonumber\\
&& \leq \int_{\R^n}|\phi_k-1||Du| + \frac{c}{k} \int_{B_{2k}\setminus B_k}|u| \nonumber \\
&& \leq \int_{\R^n}|\phi_k-1||Du| + \frac{c}{k} \left(\int_{B_{2k} \setminus B_k}|u|^{\frac{n}{n-1}}\right)^{\frac{n-1}{n}}|B_{2k}\setminus B_k|^{\frac{1}{n}} \nonumber\\
&& \leq \int_{\R^n}|\phi_k-1||Du| + c \left(\int_{B_{2k}\setminus B_k}|u|^{\frac{n}{n-1}}\right)^{\frac{n-1}{n}}. \label{bienagotada}
\end{eqnarray}
We let $k\to\infty$ in \eqref{bienagotada} and use \eqref{parece1} and  the dominated convergence theorem together with the fact that $u \in L^{\frac{n}{n-1}}$
to obtain \eqref{approxg}.

On the other hand, the coarea formula for $BV$ functions yields
\begin{eqnarray*}
\int_{\R^n}|D(\phi_k u- (\phi_k u)\wedge j )|&=&\int_0^\infty \mathcal{H}^{n-1}(\partial^*\{\phi_k u- (\phi_k u)\wedge j >t\})dt\\
&=&\int_0^\infty \mathcal{H}^{n-1}(\partial^*\{\phi_k u -j>t\})dt\\
&=&\int_0^\infty \mathcal{H}^{n-1}(\partial^*\{\phi_k u>j+t\})dt\\
&=&\int_j^\infty \mathcal{H}^{n-1}(\partial^*\{\phi_k u>s\})ds.
%&\leq&\int_k^\infty \mathcal{H}^{n-1}(\partial^*\{u>s\}) ds.
\end{eqnarray*}
%\textcolor{red}{I think we need to split the theorem in two parts, since this last inequality does not seem correct to me. We are truncating at the same time %that we put compact support with the same $k$}
Here $\partial^* E$ stands for the reduced boundary of a set $E$.
Since $\int_0^\infty\mathcal{H}^{n-1}(\partial^*\{\phi_k u>s\})ds < \infty$, the Lebesgue dominated convergence theorem yields 
the limit \eqref{approxg-j} for each fixed $k>0$.

By the triangle inequality and \eqref{approxg}-\eqref{approxg-j}, each nonnegative     $ u\in BV_{\frac{n}{n-1}}(\R^n)$ 
can be approximated by a function in $BV_c^\infty(\R^n)$. For a general $ u\in BV_{\frac{n}{n-1}}(\R^n)$, let $u^{+}$ be the positive part of $u$. From the proof of \cite[Theorem 3.96]{afp}, we have  $u^{+}\in BV_{loc}(\R^n)$ and $\norm{Du^{+}}(A)\leq \norm{Du}(A)$ for any open set $A\Subset\R^n$. Thus $\norm{Du^{+}}(\R^n)\leq \norm{Du}(\R^n)<+\infty$ and  $u^+$ belongs to  $BV_{\frac{n}{n-1}}(\R^n)$. Likewise, we have $u^-\in BV_{\frac{n}{n-1}}(\R^n)$. Now
 by considering  separately the positive and negative parts of a function $u\in BV_{\frac{n}{n-1}}(\R^n)$, it is then easy to see 
the density of $BV_c^\infty(\R^n)$  in $BV_{\frac{n}{n-1}}(\R^n)$.
\end{proof}
We have the following corollaries of Theorem \ref{density}:

\begin{corollary}
$BV_c^{\infty}(\R^n)$ is dense in $BV(\R^n)$.
\end{corollary}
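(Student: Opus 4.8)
The plan is to obtain the corollary as an immediate consequence of Theorem~\ref{density} together with the embedding \eqref{inmerso}. The point to keep in mind is that on $BV(\R^n)$ we are using the homogeneous norm \eqref{normaenbvtodoelespacio}, namely $\norm{u}_{BV(\R^n)}=\norm{Du}(\R^n)$, which is literally the same quantity as the norm on $BV_{\frac n{n-1}}(\R^n)$; so even though the two spaces are defined through different integrability hypotheses ($L^1$ versus $L^{\frac n{n-1}}$), they carry the same norm, and the inclusion $BV(\R^n)\subset BV_{\frac n{n-1}}(\R^n)$ is isometric onto its image.

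First I would record this inclusion: it is exactly \eqref{inmerso}, which follows from Theorem~\ref{mesorprendi}, since any $u\in BV(\R^n)$ already has $Du$ a finite vector measure and, by \eqref{muyutil}, $\norm{u}_{L^{\frac n{n-1}}(\R^n)}\le C(n)\norm{Du}(\R^n)<\infty$. Next I would observe that $BV_c^\infty(\R^n)$, the space of bounded compactly supported $BV$ functions, sits inside both $BV(\R^n)$ and $BV_{\frac n{n-1}}(\R^n)$ (a bounded, compactly supported, locally integrable function lies in every $L^p(\R^n)$), so the notion of density of $BV_c^\infty(\R^n)$ makes sense in either space, and — the ambient norms being identical — density in one is equivalent to density in the other.

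Finally, given $u\in BV(\R^n)$, I would view $u$ as an element of $BV_{\frac n{n-1}}(\R^n)$ via \eqref{inmerso} and apply Theorem~\ref{density}: there is a sequence $v_k\in BV_c^\infty(\R^n)$ with $\norm{D(v_k-u)}(\R^n)=\norm{v_k-u}_{BV_{\frac n{n-1}}(\R^n)}\to 0$, that is, $\norm{v_k-u}_{BV(\R^n)}\to 0$, which is the asserted density. There is no real obstacle here; the only subtlety worth flagging explicitly is that the statement refers to the homogeneous norm \eqref{normaenbvtodoelespacio} and not to \eqref{normaenbv} — otherwise one would additionally need $\norm{v_k-u}_{L^1(\R^n)}\to 0$, which Theorem~\ref{density} does not deliver, in accordance with the Remark that $BV(\R^n)$ under the homogeneous norm fails to be a Banach space.
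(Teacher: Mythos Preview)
Your proposal is correct and follows exactly the paper's approach: the paper's proof is the single line ``This follows immediately from \eqref{inmerso} and Theorem \ref{density},'' and you have simply spelled out why that implication holds (the two norms coincide on $BV(\R^n)$, so density in $BV_{\frac{n}{n-1}}(\R^n)$ restricts to density in $BV(\R^n)$). Your remark about the homogeneous versus full norm is a useful clarification but not an addition to the argument itself.
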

\begin{proof}
This follow immediately from \eqref{inmerso} and Theorem \ref{density}.
\end{proof}

\begin{corollary}
\label{iso-R}
The spaces $BV(\R^n)^*$ and $BV_{\frac{n}{n-1}}(\R^n)^*$ are isometrically isomorphic.
\end{corollary}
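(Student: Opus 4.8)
The plan is to realize $BV(\R^n)^*$ as the space of restrictions of functionals on $BV_{\frac{n}{n-1}}(\R^n)$, and then to invoke the extension Theorem \ref{BLT} together with the density result just proved.

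First I would record the structural observation that makes everything work. By the continuous embedding \eqref{inmerso}, $BV(\R^n)$ is a linear subspace of $BV_{\frac{n}{n-1}}(\R^n)$, and \emph{both} spaces are equipped with the same homogeneous norm $\norm{Du}(\R^n)$ (this is a genuine norm on $BV(\R^n)$ because $L^1(\R^n)$ contains no nonzero constant). Hence the inclusion $\iota\colon BV(\R^n)\hookrightarrow BV_{\frac{n}{n-1}}(\R^n)$ is an \emph{isometric} embedding, not merely a continuous one. Moreover its range is dense: indeed $BV_c^\infty(\R^n)\subset BV(\R^n)\subset BV_{\frac{n}{n-1}}(\R^n)$, and $BV_c^\infty(\R^n)$ is dense in $BV_{\frac{n}{n-1}}(\R^n)$ by Theorem \ref{density}. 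Thus $BV(\R^n)$ is a dense normed subspace of the Banach space $BV_{\frac{n}{n-1}}(\R^n)$; equivalently, $BV_{\frac{n}{n-1}}(\R^n)$ is a copy of the completion of $BV(\R^n)$ under $\norm{Du}(\R^n)$.

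Next I would define the restriction map $R\colon BV_{\frac{n}{n-1}}(\R^n)^*\to BV(\R^n)^*$ by $R(T)=T\circ\iota=T|_{BV(\R^n)}$. It is linear, and $\norm{R(T)}_{BV(\R^n)^*}\le\norm{T}_{BV_{\frac{n}{n-1}}(\R^n)^*}$ since $\iota$ preserves norms. For surjectivity and the reverse norm inequality I would apply Theorem \ref{BLT} with $X=BV_{\frac{n}{n-1}}(\R^n)$, the dense subspace $D=BV(\R^n)$, and $Y=\R$: any $S\in BV(\R^n)^*$, being a bounded linear functional on $D$, extends uniquely to $\hat S\in BV_{\frac{n}{n-1}}(\R^n)^*$ with $\norm{\hat S}=\norm{S}$, and clearly $R(\hat S)=S$, so $R$ is onto. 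Applying the same uniqueness to $S=R(T)=T|_{D}$: the functional $T$ is itself a bounded extension of $T|_{D}$ to $X$, so by uniqueness $T=\widehat{T|_{D}}$ and hence $\norm{T}=\norm{T|_{D}}=\norm{R(T)}$. Injectivity follows from the isometry property (or again from uniqueness: if $T|_{D}=0$ then $T$ is the unique extension of $0$, so $T=0$). Therefore $R$ is a surjective linear isometry, i.e.\ an isometric isomorphism, with inverse $S\mapsto\hat S$.

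The argument is essentially soft once Theorem \ref{density} is available; the only point needing care is the remark that $\iota$ is isometric rather than just continuous — this is exactly what upgrades the isomorphism of the dual spaces to an \emph{isometric} isomorphism, and it holds precisely because both $BV(\R^n)$ and $BV_{\frac{n}{n-1}}(\R^n)$ carry the homogeneous norm $\norm{Du}(\R^n)$. I do not expect any further obstacle.
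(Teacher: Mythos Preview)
Your proof is correct and follows essentially the same approach as the paper: define the restriction map from $BV_{\frac{n}{n-1}}(\R^n)^*$ to $BV(\R^n)^*$, then use density together with Theorem~\ref{BLT} to produce the norm-preserving inverse. The only cosmetic difference is that the paper routes the extension through the smaller dense subspace $BV_c^\infty(\R^n)$, whereas you apply Theorem~\ref{BLT} directly with $D=BV(\R^n)$; your observation that the inclusion $\iota$ is isometric (since both spaces carry the homogeneous norm $\norm{Du}(\R^n)$) makes this streamlining possible.
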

\begin{proof}
We define the map
\begin{equation*}
S:BV_{\frac{n}{n-1}}(\R^n)^* \to BV(\R^n)^*
\end{equation*}
as 
\begin{equation*}
S(T)= T \rtangle BV(\R^n).
\end{equation*}
First, we note the $S$ is injective since $S(T)=0$ implies that $T \rtangle BV(\R^n) \equiv 0$. In particular, $T \rtangle BV_c^{\infty}(\R^n) \equiv 0$. Since $BV_c^{\infty}(\R^n)$ is dense in $BV_{\frac{n}{n-1}}(\R^n)$ and $T$ is continuous on $BV_{\frac{n}{n-1}}(\R^n)$, it is easy to see that 
 $T \rtangle BV_{\frac{n}{n-1}}(\R^n)\equiv 0$. We now proceed to show that $S$ is surjective. Let $T \in BV(\R^n)^*$. Then $T \rtangle BV_c^{\infty}(\R^n)$ is a continuous linear functional. Using again that $BV_c^{\infty}(\R^n)$ is dense in $BV_{\frac{n}{n-1}}(\R^n)$, $T \rtangle BV_c^{\infty}(\R^n)$ has a unique continuous extension $\hat{T} \in BV_{\frac{n}{n-1}}(\R^n)^*$ and clearly $S(\hat{T})= T$. 
Moreover, for any $T \in BV(\R^n)^*$, the unique extension $\hat{T}$ to $BV_{\frac{n}{n-1}}(\R^n)$ has the same norm (see Theorem \ref{BLT}), that is,
\begin{equation*}
\norm{T}_{BV(\R^n)^*}= \norm{\hat{T}}_{BV_{\frac{n}{n-1}}(\R^n)^*},
\end{equation*}
and hence
\begin{equation*}
\norm{S(\hat{T})}_{BV(\R^n)^*}= \norm{\hat{T}}_{BV_{\frac{n}{n-1}}(\R^n)^*},
\end{equation*}
which implies that $S$ is an isometry.
\end{proof}

We now proceed to make precise our definitions of measures in  $\dot{W}^{1,1}(\R^n)^*$
and $BV_{\frac{n}{n-1}}(\R^n)^*$.
\begin{definition}
\label{panza1}
We let
\begin{equation*}
\mathcal{M}_{loc} \cap \dot{W}^{1,1}(\R^n)^*:= \{T \in \dot{W}^{1,1}(\R^n)^* : T(\varphi)=\int_{\R^n} \varphi d\mu \text{ for some } \mu \in \mathcal{M}_{loc}(\R^n), \forall \varphi \in C_c^{\infty}(\R^n)\}. 
\end{equation*}
Therefore, if $\mu \in \mathcal{M}_{loc}(\R^n) \cap \dot{W}^{1,1}(\R^n)^*$, then the action $<\mu,u>$ can be uniquely defined for all $u \in \dot{W}^{1,1}(\R^n)$ (because of the density of $C_c^{\infty}(\R^n)$ in $\dot{W}^{1,1}(\R^n))$.
\end{definition}

\begin{definition}
\label{panza2}
We let
\begin{equation*}
\mathcal{M}_{loc} \cap BV_{\frac{n}{n-1}}(\R^n)^*:=\{T \in BV_{\frac{n}{n-1}}(\R^n)^*: T(\varphi)=\int_{\R^n} \varphi^* d\mu \text{ for some } \mu \in \mathcal{M}_{loc}, \forall \varphi \in BV_c^{\infty}(\R^n)\},
\end{equation*} 
where $\varphi^*$ is the precise representative of $\varphi$ in $BV_c^{\infty}(\R^n)$. Thus, if $\mu \in \mathcal{M}_{loc} \cap BV_{\frac{n}{n-1}}(\R^n)^*$, then the action $<\mu,u>$ can be uniquely defined for all $u \in BV_{\frac{n}{n-1}}(\R^n)$ (because of the density of $BV_c^{\infty}(\R^n)$
in $BV_{\frac{n}{n-1}}(\R^n))$.
\end{definition}

%\begin{definition}
%We let $\mathcal{E}:= \mathcal{M}_{loc} \cap BV_{\frac{n}{n-1}}(\R^n)^*$ and $\mathcal{F}:= %\mathcal{M}_{loc} \cap w^{1,1}(\R^n)^*$. Both $\mathcal{F}$ and $\mathcal{G}$ are normed linear spaces.
%\end{definition}
We will study the normed linear spaces $\mathcal{M}_{loc} \cap \dot{W}^{1,1}(\R^n)^*$
and $\mathcal{M}_{loc} \cap BV_{\frac{n}{n-1}}(\R^n)^*$ in the next section. In particular, we will show in Theorem \ref{nuevo} below that these spaces are isometrically isomorphic.  In Definition \ref{panza2}, if we use $C_c^{\infty}(\R^n)$ instead of $BV_c^{\infty}(\R^n)$, then by the Hahn-Banach Theorem there exist a non-zero $T \in BV_{\frac{n}{n-1}}(\R^n)^*$ that is represented by the zero measure, which would cause a problem of injectivity in Theorem \ref{nuevo}.

%It is easy to see that
%\begin{equation}
%\norm{S(T)}_{BV(\R^n)^*} \leq \norm{T}_{BV_{\frac{n}{n-1}}(\R^n)^*},
%\end{equation}
%and that it is not possible to have $\norm{S(T)}_{BV(\R^n)^*} < \norm{T}_{BV_{\frac{n}{n-1}}%(\R^n)^*}$
%\textcolor{red}{}.

\section{Characterizations of measures in $BV_{\frac{n}{n-1}}(\R^n)^*$}
The following lemma characterizes all the distributions in $\dot{W}^{1,1}(\R^n)^*$. We recall that $\dot{W}^{1,1}(\R^n)$ is the homogeneous Sobolev space introduced in Definition \ref{homogeneous}.  
\begin{lemma}
\label{lemma1}
The distribution $T$ belongs to $\dot{W}^{1,1}(\R^n)^*$ if and only if $T=\div \FF$ for some vector field $\FF \in L^{\infty}(\R^n,\R^n)$. Moreover,
\begin{equation*}
 \norm{T}_{\dot{W}^{1,1}(\R^n)^*}= \min \{ \norm{\FF}_{L^{\infty}(\R^n,\R^n)} \},
\end{equation*}
where the minimum is taken over all $\FF \in L^{\infty}(\R^n,\R^n)$ such that $\div \FF =T$. Here we use the norm
$$\norm{\FF}_{L^{\infty}(\R^n,\R^n)}:=  \norm{(F_1^2 + F_2^2 + \cdots + F_n^2)^{1/2}}_{L^{\infty}(\R^n)} \text{ for } \FF=(F_1, \dots, F_n).$$
\end{lemma}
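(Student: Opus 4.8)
The plan is to identify $\dot W^{1,1}(\R^n)^*$ with a quotient of $L^\infty(\R^n,\R^n)$ via the gradient embedding, exactly in the spirit of the duality between a subspace and a quotient space. First I would observe that the map $u \mapsto \nabla u$ is a linear isometry from $\dot W^{1,1}(\R^n)$ (with norm $\|Du\|_{L^1}$) onto a closed subspace $E := \{\nabla u : u \in \dot W^{1,1}(\R^n)\}$ of $L^1(\R^n,\R^n)$; closedness of $E$ follows because $\|\nabla u_k - g\|_{L^1}\to 0$ forces $u_k$ to be Cauchy in $\dot W^{1,1}$, hence convergent to some $u$ with $\nabla u = g$ by uniqueness of distributional limits. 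Given $T \in \dot W^{1,1}(\R^n)^*$, the functional $\nabla u \mapsto T(u)$ is then a bounded linear functional on $E$ with the same norm; by Hahn--Banach it extends to a bounded linear functional $L$ on all of $L^1(\R^n,\R^n)$ with $\|L\| = \|T\|_{\dot W^{1,1}(\R^n)^*}$. Since $(L^1(\R^n,\R^n))^* = L^\infty(\R^n,\R^n)$, there is $\FF \in L^\infty(\R^n,\R^n)$ with $\|\FF\|_{L^\infty(\R^n,\R^n)} = \|L\| = \|T\|_{\dot W^{1,1}(\R^n)^*}$ and $L(g) = \int_{\R^n} \FF\cdot g\, dx$ for all $g \in L^1(\R^n,\R^n)$. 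In particular, for $\varphi \in C_c^\infty(\R^n) \subset \dot W^{1,1}(\R^n)$,
\begin{equation*}
T(\varphi) = L(\nabla \varphi) = \int_{\R^n} \FF\cdot \nabla\varphi\, dx = -(\div \FF)(\varphi),
\end{equation*}
so $T = -\div\FF$ as distributions; replacing $\FF$ by $-\FF$ (which does not change the $L^\infty$ norm) gives $T = \div\FF$ with $\|\FF\|_{L^\infty} = \|T\|_{\dot W^{1,1}(\R^n)^*}$.

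For the converse and the minimality claim, I would argue that if $\FF \in L^\infty(\R^n,\R^n)$ and $T = \div\FF$, then for every $u \in C_c^\infty(\R^n)$ we have $|T(u)| = |\int \FF\cdot\nabla u\,dx| \le \|\FF\|_{L^\infty}\|Du\|_{L^1}$, and by density of $C_c^\infty(\R^n)$ in $\dot W^{1,1}(\R^n)$ (Definition \ref{homogeneous}) together with Theorem \ref{BLT}, $T$ extends uniquely to an element of $\dot W^{1,1}(\R^n)^*$ with $\|T\|_{\dot W^{1,1}(\R^n)^*} \le \|\FF\|_{L^\infty(\R^n,\R^n)}$. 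Combining this with the previous paragraph: the extension constructed there realizes the norm, so the infimum over all representing $\FF$ is attained and equals $\|T\|_{\dot W^{1,1}(\R^n)^*}$; hence the minimum in the statement is justified. One subtlety to record carefully is that "$T = \div\FF$" should be interpreted as equality of distributions, i.e. $T(u) = -\int \FF\cdot\nabla u$ for $u \in C_c^\infty$, and that this determines $T$ uniquely on $\dot W^{1,1}(\R^n)$ by density — so there is no ambiguity in speaking of "the" distribution $T$.

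The main obstacle, though a mild one, is the verification that $E = \{\nabla u : u \in \dot W^{1,1}(\R^n)\}$ is genuinely closed in $L^1(\R^n,\R^n)$, since the whole Hahn--Banach argument produces a functional with the right norm on $E$ but one must know $E$ is a closed subspace for the extension's norm to equal $\|T\|_{\dot W^{1,1}(\R^n)^*}$ rather than merely bound it; this in turn rests on the fact that $\dot W^{1,1}(\R^n)$ is complete under $\|Du\|_{L^1}$, which is where the embedding $\dot W^{1,1}(\R^n) \hookrightarrow BV_{\frac{n}{n-1}}(\R^n)$ and the Sobolev inequality \eqref{muyutil} enter: a Cauchy sequence in $\|Du\|_{L^1}$ is Cauchy in $L^{\frac n{n-1}}$ as well, so it converges to some $u \in L^{\frac n{n-1}}$ with $\nabla u \in L^1$, i.e. $u \in \dot W^{1,1}(\R^n)$. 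With completeness in hand, the rest is the standard subspace/quotient duality and bookkeeping of norms.
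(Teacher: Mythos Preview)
Your proof is correct and follows essentially the same route as the paper: embed $\dot W^{1,1}(\R^n)$ isometrically into $L^1(\R^n,\R^n)$ via the gradient, transfer $T$ to a functional on the range, extend by Hahn--Banach, and represent the extension by some $\FF\in L^\infty(\R^n,\R^n)$ via the duality $(L^1)^*=L^\infty$. One small remark: the closedness of $E$ is not actually needed for the norm equality --- Hahn--Banach preserves the norm on any linear subspace, closed or not, and the identity $\|T_1\|_{E^*}=\|T\|_{\dot W^{1,1}(\R^n)^*}$ follows directly from $u\mapsto\nabla u$ being an isometry --- so your ``main obstacle'' is harmless but also unnecessary.
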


\begin{proof}
It is easy to see that if $T= \div \FF$ where $\FF \in L^{\infty}(\R^n,\R^n)$ then $T \in \dot{W}^{1,1}(\R^n)^*$ with
\begin{equation*}
\norm{T}_{\dot{W}^{1,1}(\R^n)^*} \leq \norm{\FF}_{L^{\infty}(\R^n,\R^n)}.
\end{equation*} 
Conversely, let $T \in \dot{W}^{1,1}(\R^n)^*$. Define
\begin{equation*}
A:\dot{W}^{1,1}(\R^n)\to L^1(\R^n,\R^n), \quad A(u)=D u,
\end{equation*}
and note that the range of $A$ is a closed subspace of $L^{1}(\R^n,\R^n)$ since $\dot{W}^{1,1}(\R^n)$ is complete.
%\begin{equation*}
%\norm{Au}_{L^1(\R^n)}=\norm{D u}_{L^1(\R^n)}=\norm{u}_{\dot{W}^{1,1}(\R^n)}= \int_{\R^n} |Du| dx.
%\end{equation*}
%Therefore, $A$ has closed range in $L^{1}(\R^n,\R^n)$ (see \cite[Theorem 2.21] {Brezis}). 
We denote the range of $A$ by $R(A)$ and we define
\begin{equation*}
T_1:R(A) \to \R
\end{equation*}
as
\begin{equation*}
T_1(Du)=T(u), \text{ for each } Du \in R(A).
\end{equation*}
Then we have
\begin{equation*}
\norm{T_1}_{R(A)^*} = \norm{T}_{\dot{W}^{1,1}(\R^n)^*}.
\end{equation*}
By Hahn-Banach Theorem there exists a norm-preserving extension $T_2$ of $T_1$ to all $L^1(\R^n,\R^n)$. On the other hand, by the Riesz Representation Theorem for vector valued functions (see \cite[pp. 98--100]{DU}) there exists a vector field $\FF \in L^{\infty}(\R^n,\R^n)$ such that 
\begin{equation*}
T_2(v)= \int_{\R^n} \FF \cdot v, \text{ for every } v \in L^{1}(\R^n,\R^n),
\end{equation*}
and
\begin{equation*}
 \norm{\FF}_{L^{\infty}(\R^n,\R^n)}= \norm{T_2}_{L^1(\R^n,\R^n)^*}= \norm{T_1}_{R(A)^*}=\norm{T}_{\dot{W}^{1,1}(\R^n)^*}.
\end{equation*}
In particular, for each $\varphi \in C_c^{\infty}(\R^n)$ we have
\begin{equation*}
  T(\varphi)= T_1(D \varphi)= T_2(D\varphi) = \int_{\R^n} \FF \cdot D\varphi,
\end{equation*}
which yields
\begin{equation*}
 T= \div (-\FF),
\end{equation*}
with
\begin{equation*}
\norm{-\FF}_{L^{\infty}(\R^n,\R^n)}= \norm{T}_{\dot{W}^{1,1}(\R^n)^*}.
\end{equation*}
\end{proof}

\begin{theorem}
\label{pmcondicion}
Let $\O \subset \R^n$ be any open set and suppose $\mu \in \M_{loc}(\O)$  such that
\begin{equation} \label{pmcondition}
|\mu (U)|\leq C\, \H^{n-1} (\partial U)
\end{equation}
for any smooth open and bounded set $U\subset \subset \O$.
Let $A$ be a compact set of $\O$.
If $\H^{n-1} (A)=0$, then $\mu(A)=0$.
\end{theorem}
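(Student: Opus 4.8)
\emph{Proof proposal.} The plan is to show $|\mu(A)|\le (C+1)\varepsilon$ for every $\varepsilon>0$. The conceptual point is that \eqref{pmcondition} only lets us estimate $\mu(U)$ for a \emph{smooth} open set $U$, whereas $A$ is an arbitrary compact set; so before smoothing we first control the mass of $\mu$ in a neighborhood of $A$. Since $\mu\in\M_{loc}(\O)$ its total variation $\norm{\mu}$ is a locally finite, hence outer regular, Radon measure on $\O$, and $\norm{\mu}(A)<\infty$ because $A\Subset\O$. Thus, given $\varepsilon>0$, I would first choose an open set $V$ with $A\subset V$, $\overline V$ compact, $\overline V\subset\O$, and $\norm{\mu}(V\setminus A)<\varepsilon$.

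Next I would build a smooth open set $U$ with $A\subset U\subset V$, $U\Subset\O$ and $\H^{n-1}(\partial U)\le\varepsilon$. Since $\H^{n-1}(A)=0$ and $A$ is compact, cover $A$ by finitely many open balls $B_i=B(x_i,r_i)$, $i=1,\dots,N$, with $\overline{B_i}\subset V$ and $\sum_{i=1}^N r_i^{\,n-1}$ as small as we like, and put $W=\bigcup_{i=1}^N B_i$; then $A\subset W$, $\overline W\subset V$, and $\norm{D\chi_W}(\R^n)\le\H^{n-1}(\partial W)\le\sum_{i=1}^N\H^{n-1}(\partial B_i)=c(n)\sum_{i=1}^N r_i^{\,n-1}$ is small. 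As $W$ is not smooth I would mollify: set $v_\sigma=\rho_\sigma*\chi_W$ for a standard mollifier $\rho_\sigma$, so $v_\sigma\in C_c^\infty(\R^n)$, $0\le v_\sigma\le 1$, $\supp v_\sigma\subset\{\operatorname{dist}(\cdot,\overline W)\le\sigma\}$, $\int_{\R^n}|\nabla v_\sigma|\di x\le\norm{D\chi_W}(\R^n)$, and for $\sigma$ small enough $\supp v_\sigma\subset V$ and $v_\sigma\equiv 1$ on $A$ (using $\operatorname{dist}(A,\R^n\setminus W)>0$). By the coarea formula $\int_0^1\H^{n-1}(\{v_\sigma=t\})\di t=\int_{\R^n}|\nabla v_\sigma|\di x$, so by Chebyshev's inequality the set of $t\in(0,1)$ with $\H^{n-1}(\{v_\sigma=t\})\le 2\int_{\R^n}|\nabla v_\sigma|\di x$ has measure $\ge 1/2$; intersecting it with the conull set of regular values of $v_\sigma$ (Sard's theorem) I pick such a regular value $t_0\in(0,1)$ and set $U=\{v_\sigma>t_0\}$. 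Then $U$ is a smooth bounded open set, $\partial U=\{v_\sigma=t_0\}$ has $\H^{n-1}(\partial U)\le 2\int_{\R^n}|\nabla v_\sigma|\di x\le 2c(n)\sum_i r_i^{\,n-1}$ (hence $\le\varepsilon$ after the initial choice), $A\subset\{v_\sigma=1\}\subset U$, and $U\subset\{v_\sigma>0\}\subset\supp v_\sigma\subset V\Subset\O$.

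Finally, since $U\Subset\O$ is smooth, \eqref{pmcondition} gives $|\mu(U)|\le C\,\H^{n-1}(\partial U)\le C\varepsilon$. Writing $\mu(A)=\mu(U)-\mu(U\setminus A)$ and using $U\setminus A\subset V\setminus A$,
\[
|\mu(A)|\ \le\ |\mu(U)|+\norm{\mu}(U\setminus A)\ \le\ C\,\H^{n-1}(\partial U)+\norm{\mu}(V\setminus A)\ \le\ (C+1)\varepsilon.
\]
Letting $\varepsilon\to 0$ gives $\mu(A)=0$.

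I expect the main obstacle to be exactly the coupling of the two ingredients in the last display: \eqref{pmcondition} bounds $\mu$ only on smooth supersets of $A$, so one must additionally know that $\mu$ carries little mass on the thin region $U\setminus A$ surrounding $A$, which is precisely what the outer-regularity choice of $V$ supplies. The passage from the finite union of balls $W$ to the smooth set $U$—via mollification, the coarea formula and Sard's theorem—is routine, but has to be arranged so that $U$ simultaneously contains $A$, stays inside $V$, and has boundary of small $\H^{n-1}$-measure.
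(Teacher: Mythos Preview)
Your proof is correct and follows essentially the same approach as the paper: cover $A$ by finitely many small balls, mollify the characteristic function of their union, and use the coarea formula together with Sard's theorem to extract a smooth superlevel set $U\supset A$ with small $\H^{n-1}(\partial U)$, to which \eqref{pmcondition} applies. The only minor difference is in the last step: the paper arranges the smooth sets $W'_\varepsilon$ to lie in the $2\varepsilon$-neighborhood $A_{2\varepsilon}$ and concludes $\mu(W'_\varepsilon)\to\mu(A)$ by dominated convergence, whereas you instead invoke outer regularity of $\norm{\mu}$ to pick $V\supset A$ with $\norm{\mu}(V\setminus A)<\varepsilon$ and finish with the triangle-type inequality $|\mu(A)|\le|\mu(U)|+\norm{\mu}(V\setminus A)$; both endings are standard and equivalent in strength.
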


\begin{proof}
As $\H^{n-1}(A)=0$, for any $0 < \varepsilon  <  \frac{1}{2}\textnormal{dist}(A, \partial  \O)$ (or for any $\varepsilon > 0$, if $\O=\R^n$), we can find a finite number of balls $B(x_i,r_i)$, $i\in I$, with 
$2r_i<\varepsilon$ such that $A\subset\bigcup\limits_{i\in I}B(x_i,r_i) \subset \O$ and 
\begin{equation} 
\label{estrella2}
\sum_{i\in I} r_i^{n-1} < \varepsilon.
\end{equation}
Let $W_\varepsilon=\bigcup\limits_{i\in I} B(x_i,r_i)$.
Then 
\begin{equation*}
%\label{codo}
A\subset\subset W_\varepsilon\subset A_\varepsilon\colon = \{x\in\Bbb R^n\colon \textrm{dist}(x,A) < \varepsilon\}.
\end{equation*}
The first inclusion follows since $A$ is compact and $W_\varepsilon$ is open; the second one follows since $2r_i<\varepsilon$ and since we may assume that $B(x_i,r_i)\cap A\neq \emptyset$ for any $i\in I$.

\noindent
We now claim that for each $\varepsilon > 0$ there exists an open set $W'_\varepsilon$ such that $W'_\varepsilon$ has smooth boundary and
\begin{equation}
\label{estrella}
\begin{cases}
A\subset\subset  W'_\varepsilon \subset A_{2\varepsilon}\\
\H^{n-1} (\partial W'_\varepsilon)\leq P(W_\varepsilon,\O),\end{cases}
\end{equation}
where $P(E,\O)$ denotes  the perimeter of a set $E$ in $\O$.  Assume for now that  $\eqref{estrella}$ holds. Then, since $A$ is compact,
\begin{equation*}
\chi_{W'_\varepsilon} {\rightarrow}{\chi_A}    \tn{ pointwise  as } \varepsilon\to 0,
\end{equation*}
and 
\begin{eqnarray*}
|\mu (W'_\varepsilon)|&\leq& C \H^{n-1} (\partial W'_\varepsilon), \tn{ by our hypothesis }\eqref{pmcondition}\\
&\leq& C P(W_\varepsilon,\O)\\
&\leq& C \sum_{i\in I} r_i^{n-1}\leq \varepsilon C, \textnormal{ by } \eqref{estrella2}.
\end{eqnarray*}
Thus, the Lebesgue dominated convergence theorem yields, after letting $\varepsilon\to 0$, the desired result:
\begin{equation*}
|\mu (A)|=0.
\end{equation*}

\noindent
We now proceed to prove \eqref{estrella}. Let $\rho$ be a standard symmetric mollifier:
\begin{equation*}
\rho\geq 0,\  \rho\in C_0^\infty (B(0,1)),\  \int_{\Bbb R^n} \rho(x) dx=1,\ \text{and } \rho(x)=\rho(-x). 
\end{equation*}
Define $\rho_{1/ k}(x)=k^n \rho(k x)$ and 
\begin{equation*}
u_k(x)=\chi_{W_\varepsilon}*\rho_{1/ k}(x)=k^n\int\rho(k(x-y))\chi_{W_\varepsilon}(y) dy
\end{equation*}
for $k=1,2,\ldots$ For $k$ large enough, say for $k\geq k_0=k_0(\epsilon)$, it follows that
\begin{eqnarray}
u_k\equiv 1\textrm{ on }A, \tn{ since } A\subset\subset W_\varepsilon \label{arriba2},\\
u_k\equiv0 \tn{ on }  \O \backslash A_{2\varepsilon}, \textrm{ since } W_\varepsilon\subset A_\varepsilon \label{abajo}.
\end{eqnarray}
We have
\begin{eqnarray*}
P(W_\varepsilon,\O)&=&|D\chi_{W_\varepsilon}|(\O)\\
&\geq& |D u_k|(\O)\\
&=& \int_0^1 P (F_t^k,\O) dt, \tn{ since } 0 \leq u_k \leq 1,
\end{eqnarray*}
where
\begin{equation*}
F_t^k := \{x\in\O\colon u_k (x) > t\}.
\end{equation*}
Note that for $k\geq k_0$, and $t\in (0,1)$ we have, by \eqref{arriba2} and \eqref{abajo},
\begin{equation*}
A\subset\subset F_t^k\subset A_{2\varepsilon}.
\end{equation*}
For a.e.~$t\in (0,1)$ the sets $F_t^k$ have smooth boundaries.
Thus we can choose $t_0 \in(0,1)$ with this property and such that
\begin{equation*}
P(F_{t_0}^k,\O)\leq P(W_\varepsilon,\O),
\end{equation*}
which is 
\begin{equation*}
\H^{n-1} (\partial F_{t_0}^k)\leq P(W_\varepsilon,\O).
\end{equation*}

Finally, we choose $W'_\varepsilon=F^k_{t_0}$ for any fixed $k\geq k_0$.
\end{proof}

\begin{corollary}
\label{pmcondition2}
If $\mu \in \M_{loc}(\O)$ satisfies the hypothesis of Theorem \ref{pmcondicion}, then $\norm{\mu} << \H^{n-1}$ in $\Omega$; that is, if $A \subset \Omega$ is any Borel measurable set such that $\H^{n-1}(A)=0$ then $\norm{\mu}(A)=0$.
\end{corollary}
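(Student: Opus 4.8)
The plan is to reduce the assertion to compact sets by inner regularity, then transfer it from $\mu$ to the total variation $\norm{\mu}$ by means of the Hahn decomposition, so that Theorem \ref{pmcondicion} can be applied directly.

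First I would observe that $\mu\in\M_{loc}(\O)$ forces its positive part $\mu^{+}$, its negative part $\mu^{-}$, and its total variation $\norm{\mu}=\mu^{+}+\mu^{-}$ all to be locally finite Borel measures on the open set $\O\subset\R^n$, hence Radon measures; in particular each of them is inner regular on every Borel subset of $\O$. Thus, given any Borel set $A\subset\O$ with $\H^{n-1}(A)=0$, I would write
\[
\norm{\mu}(A)=\sup\{\norm{\mu}(K)\colon K\subset A,\ K\ \text{compact}\},
\]
and note that each such $K$ satisfies $\H^{n-1}(K)\le\H^{n-1}(A)=0$. Hence it suffices to prove that $\norm{\mu}(K)=0$ for every compact $K\subset\O$ with $\H^{n-1}(K)=0$.

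Next I would fix such a $K$ together with a Hahn decomposition $\O=P\cup N$ for $\mu$, so that $\mu^{+}$ is concentrated on $P$, $\mu^{-}$ is concentrated on $N$, and $\mu=\mu^{+}$ on Borel subsets of $P$. Then $\norm{\mu}(K)=\mu^{+}(K\cap P)+\mu^{-}(K\cap N)$, and by inner regularity of $\mu^{+}$,
\[
\mu^{+}(K\cap P)=\sup\{\mu(K')\colon K'\subset K\cap P,\ K'\ \text{compact}\}.
\]
Each competitor $K'$ is a compact subset of $\O$ with $\H^{n-1}(K')\le\H^{n-1}(K)=0$, so Theorem \ref{pmcondicion} gives $\mu(K')=0$; therefore $\mu^{+}(K\cap P)=0$, and applying the same reasoning to $-\mu$ on $N$ gives $\mu^{-}(K\cap N)=0$. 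Hence $\norm{\mu}(K)=0$, which by the first step yields $\norm{\mu}<<\H^{n-1}$ in $\O$.

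I expect the only delicate point to be the passage from $\mu$ to $\norm{\mu}$: since the Hahn sets $P$ and $N$ are merely Borel, $K\cap P$ and $K\cap N$ need not be compact, which is precisely why one must first use inner regularity of $\mu^{\pm}$ to pass to honest compact subsets before invoking Theorem \ref{pmcondicion}. Beyond this bookkeeping, the argument needs nothing more than standard regularity of locally finite Radon measures, so I do not anticipate a genuine obstacle.
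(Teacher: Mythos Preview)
Your proof is correct and follows essentially the same route as the paper's: Hahn decomposition to reduce $\norm{\mu}$ to $\mu$ on a positive set, then inner regularity to pass to compact subsets where Theorem~\ref{pmcondicion} applies. The only difference is cosmetic---you first reduce the Borel set $A$ to a compact $K$ and then reduce $K\cap P$ to a further compact $K'$, whereas the paper applies the Hahn decomposition directly to $A$ and then invokes inner regularity once; your first reduction is harmless but unnecessary.
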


\begin{proof}
The domain $\O$ can be decomposed as $\O=\O^{+}\cup \O^{-}$, such that $\mu^{+}=\mu \rtangle \O^{+}$  and $\mu^{-}=\mu \rtangle \O^{-}$, where
$\mu^{+}$ and $\mu^{-}$ are the positive and negative parts of $\mu$, respectively. Let $A \subset \O$ be a Borel set satisfying $\H^{n-1}(A)=0$. By writing 
$A=(A\cap\O^+) \cup (A\cap\O^-)$, we may assume that $A \subset \O^{+}$ and hence $\norm{\mu}(A)=\mu^{+}(A)$. Moreover, since $\mu^{+}$ is a Radon measure we can assume that $A$ is compact. Hence, Theorem \ref{pmcondicion} yields $\norm{\mu}(A) = \mu^{+}(A)=\mu(A)=0$.
\end{proof}

The following theorem characterizes all the signed measures in $BV_{\frac{n}{n-1}}(\R^n)^*$. This result was first proven in Phuc-Torres \cite{PT} for the space $BV(\R^n)^*$
with no sharp control on the involving constants.  In this paper we offer a new and direct proof of ${\bf (i)} \Rightarrow {\bf (ii)}$. We also clarify the first part of ${\bf (iii)}$. Moreover, our proof of ${\bf (ii)} \Rightarrow {\bf (iii)}$ yields a sharp constant that will be needed for the proof of Theorem \ref{nuevo} below.

\begin{theorem}\label{superestrella}
 Let $\mu \in \mathcal{M}_{loc} (\rnm)$ be a locally finite signed measure. The following are equivalent:
 
 {\rm {\bf(i)}} There exists a vector field $\FF \in L^{\infty}(\rnm,\rnm)$ such that
 $\div \FF = \mu$ in the sense of distributions.
 \begin{comment}
 {\rm {\bf(ii)}} For any bounded set of finite perimeter
 $E$, there exist functions $\mathscr{F}_i\cdot \nu$ and
 $\mathscr{F}_e\cdot \nu$ in $L^{\infty}(\partial^{*}E)$ such that 
\begin{equation*}
\mu(E^1) = \int_{\po^* E}(\mathscr{F}_i \cdot\nu)(y)d\H^{n-1}(y) 
\end{equation*}
and 
\begin{equation*}
\mu(E^1 \cup \po^* E)= \int_{\po^* E}(\mathscr{F}_e \cdot
\nu)(y)d\H^{n-1}(y).
\end{equation*}
Moreover, $\norm{\mathscr{F}_i\cdot \nu}_{L^{\infty}(\partial^{*}E)}$ and 
$\norm{\mathscr{F}_e\cdot \nu}_{L^{\infty}(\partial^{*}E)}$ are independent of $E$.

 {\rm {\bf(iii)}} There is a constant $C$ such that 
 $$\max \{ |\mu(E^1)|, |\mu(\po^* E)| \} \leq C\H^{n-1}(\po^* E)$$ for any bounded set of finite perimeter
 $E$.
 \end{comment}
 
{\rm {\bf(ii)}} There is a constant $C$ such that 
$$| \mu(U)| \leq C\, \H^{n-1}(\po U)$$
 for any smooth bounded open (or closed) set $U$ with $\H^{n-1}(\po U) < +\infty$. 
 %Here by  a smooth bounded  closed set $U$ we mean that $U$ is the closure of a smooth bounded open set.

 {\rm {\bf(iii)}} $\H^{n-1}(A)=0$ implies $\norm{\mu}(A)=0$ for all Borel sets $A$ and there is a constant $C$ such that, for all $ u \in BV_c^\infty(\rnm)$,
 $$
|<\mu,u>|:=\left|\int_{\rnm} u^*d\mu\right| \leq C\int_{\rnm}| D u|, 
$$
where $u^*$ is the representative in the class of $u$ that is defined
$\H^{n-1}$-almost everywhere.

{\rm {\bf(iv)}}  $\mu \in BV_{\frac{n}{n-1}}(\rnm)^*$. The action of $\mu$ on any $u \in BV_{\frac{n}{n-1}}(\rnm)$
is defined (uniquely) as
$$ 
<\mu, u>:= \lim_{k \to \infty} <\mu,u_k>= \lim_{k \to \infty} \int_{\rnm} u_{k}^*d\mu,
$$
where $u_k \in BV_c^\infty(\rnm)$ 
converges to $u$ in $BV_{\frac{n}{n-1}}(\rnm)$. In particular, if $u \in BV_c^{\infty}(\rnm)$ then
$$<\mu,u>=\int_{\rnm} u^*d\mu,$$
and moreover, if $\mu$ is a non-negative measure then,
for all $u \in BV_{\frac{n}{n-1}}(\rnm)$,
$$
<\mu,u>=\int_{\rnm} u^*d\mu.
$$

 %{\rm (iv)} There is a constant $C$ such that $\int_{\rnm} u d\mu \leq C\int_{\rnm}|\nabla u|dx, \tn{ for all } u \in BV_c^\infty(\rnm)$.
 
%{\rm (vii)} There is a constant $C$ such that
 %$$\left|\int_{\rnm} u d\mu\right| \leq C\int_{\rnm}|\nabla u|dx$$ 
 %for all  $u \in C_0^\infty(\rnm)$.
 
 \end{theorem}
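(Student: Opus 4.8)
The plan is to prove the cycle of implications $\mathbf{(i)}\Rightarrow\mathbf{(ii)}\Rightarrow\mathbf{(iii)}\Rightarrow\mathbf{(iv)}\Rightarrow\mathbf{(i)}$, keeping track of the constant $C$ so that each arrow preserves its value; this will simultaneously yield $\norm{\mu}_{BV_{\frac{n}{n-1}}(\R^n)^*}=\min\{\norm{\FF}_{L^{\infty}(\R^n,\R^n)}:\div\FF=\mu\}$. For $\mathbf{(i)}\Rightarrow\mathbf{(ii)}$, let $U$ be a smooth bounded open set and $d$ the signed distance to $\partial U$ (negative inside), which is $C^1$ with $|\nabla d|\equiv 1$ on a tubular neighborhood of $\partial U$. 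Choosing $g_\var\in C^\infty(\R)$ nonincreasing with $g_\var\equiv 1$ on $(-\infty,-\var]$ and $g_\var\equiv 0$ on $[0,\infty)$, set $\varphi_\var=g_\var\circ d\in C_c^\infty(\R^n)$; then $0\le\varphi_\var\le 1$, $\varphi_\var\to\chi_U$ pointwise, and by the coarea formula and the smoothness of $\partial U$,
\[
\int_{\R^n}|\nabla\varphi_\var|\,dx=\int_{-\var}^{0}|g_\var'(s)|\,\H^{n-1}(d^{-1}(s))\,ds\ \longrightarrow\ \H^{n-1}(\partial U)\qquad\text{as }\var\to 0 .
\]
Since $\mu=\div\FF$ we have $\int_{\R^n}\varphi_\var\,d\mu=-\int_{\R^n}\FF\cdot\nabla\varphi_\var\,dx$, hence $\bigl|\int_{\R^n}\varphi_\var\,d\mu\bigr|\le\norm{\FF}_{L^\infty}\int_{\R^n}|\nabla\varphi_\var|\,dx$; letting $\var\to 0$ and using dominated convergence (legitimate since $\norm{\mu}$ is finite on a compact neighborhood of $\overline U$) gives $|\mu(U)|\le\norm{\FF}_{L^\infty}\H^{n-1}(\partial U)$. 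The closed-set case is identical upon shifting the support of $g_\var$ to $[0,\var]$.

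For $\mathbf{(ii)}\Rightarrow\mathbf{(iii)}$, the assertion $\norm{\mu}\ll\H^{n-1}$ is exactly Corollary \ref{pmcondition2} (applied with $\O=\R^n$). For the inequality I first treat $v\in C_c^\infty(\R^n)$: by Sard's theorem the sets $\{v>t\}$ ($t>0$) and $\{v<t\}$ ($t<0$) are smooth bounded open sets for a.e.\ $t$, so Lemma \ref{laformula}, hypothesis $\mathbf{(ii)}$, and the classical coarea formula give
\[
\Bigl|\int_{\R^n}v\,d\mu\Bigr|\le\int_0^\infty|\mu(\{v>t\})|\,dt+\int_{-\infty}^0|\mu(\{v<t\})|\,dt\le C\int_{-\infty}^\infty\H^{n-1}(v^{-1}(t))\,dt=C\int_{\R^n}|\nabla v|\,dx .
\]
For general $u\in BV_c^\infty(\R^n)$, mollify with a \emph{symmetric} kernel: $u_\delta:=u*\rho_\delta\in C_c^\infty(\R^n)$ satisfies $\int_{\R^n}|\nabla u_\delta|\,dx\to\int_{\R^n}|Du|$ and $u_\delta\to u^*$ at $\H^{n-1}$-a.e.\ point, hence $\norm{\mu}$-a.e.\ by the absolute continuity just established; since $|u_\delta|\le\norm{u}_{L^\infty}$ with supports in a fixed compact set, dominated convergence gives $\int_{\R^n}u^*\,d\mu=\lim_\delta\int_{\R^n}u_\delta\,d\mu$, and the displayed estimate passes to the limit to yield $\bigl|\int_{\R^n}u^*\,d\mu\bigr|\le C\int_{\R^n}|Du|$.

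For $\mathbf{(iii)}\Rightarrow\mathbf{(iv)}$, the absolute continuity makes $u\mapsto\int_{\R^n}u^*\,d\mu$ a well-defined linear functional on $BV_c^\infty(\R^n)$ (independent of the $\H^{n-1}$-a.e.\ representative) bounded by $C$; as $BV_c^\infty(\R^n)$ is dense in $BV_{\frac{n}{n-1}}(\R^n)$ (Theorem \ref{density}), Theorem \ref{BLT} extends it uniquely to $\mu\in BV_{\frac{n}{n-1}}(\R^n)^*$ of the same norm, with action given by the stated limit and equal to $\int_{\R^n}u^*\,d\mu$ on $BV_c^\infty(\R^n)$. When $\mu\ge 0$ and $u\ge 0$, choose (via Theorem \ref{density}, taking the $\phi_k$ nondecreasing and a diagonal truncation level) $u_k\in BV_c^\infty(\R^n)$ with $u_k\uparrow u$ pointwise $\H^{n-1}$-a.e.\ and $u_k\to u$ in norm; monotone convergence gives $\langle\mu,u\rangle=\int_{\R^n}u^*\,d\mu$, and the signed case follows from $u=u^+-u^-$ with $u^\pm\in BV_{\frac{n}{n-1}}(\R^n)$ by Theorem \ref{density} and $(u^+)^*-(u^-)^*=u^*$ $\H^{n-1}$-a.e. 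Finally, for $\mathbf{(iv)}\Rightarrow\mathbf{(i)}$: since $\dot{W}^{1,1}(\R^n)\hookrightarrow BV_{\frac{n}{n-1}}(\R^n)$ (Definition \ref{homogeneous}), $\mu$ restricts to a functional $T\in\dot{W}^{1,1}(\R^n)^*$ with $T(\varphi)=\int_{\R^n}\varphi\,d\mu$ for every $\varphi\in C_c^\infty(\R^n)$; Lemma \ref{lemma1} gives $T=\div\FF$ with $\FF\in L^{\infty}(\R^n,\R^n)$, so $\int_{\R^n}\varphi\,d\mu=-\int_{\R^n}\FF\cdot\nabla\varphi\,dx$ for all $\varphi\in C_c^\infty(\R^n)$, i.e.\ $\mu=\div\FF$ in the sense of distributions, and tracing norms through Lemma \ref{lemma1}, Theorem \ref{BLT}, and the estimates above produces the asserted identity for $\norm{\mu}_{BV_{\frac{n}{n-1}}(\R^n)^*}$.

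The step I expect to be the main obstacle is $\mathbf{(ii)}\Rightarrow\mathbf{(iii)}$: beyond the coarea slicing, one must know that the symmetric mollifications $u_\delta$ converge to the \emph{precise} representative $u^*$ at $\H^{n-1}$-almost every point (which rests on the fine structure of $BV$ functions at their jump set), so that the absolute continuity $\norm{\mu}\ll\H^{n-1}$ can be used to upgrade this to $\norm{\mu}$-a.e.\ convergence and make the limiting argument rigorous. The passage from smooth sets to sets of finite perimeter (equivalently, to arbitrary superlevel sets of $u$) is the other delicate point, but mollifying $u$ itself rather than the sets circumvents it.
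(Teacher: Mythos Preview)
Your proof is correct and follows the same overall cycle $\mathbf{(i)}\Rightarrow\mathbf{(ii)}\Rightarrow\mathbf{(iii)}\Rightarrow\mathbf{(iv)}\Rightarrow\mathbf{(i)}$ as the paper, with the steps $\mathbf{(ii)}\Rightarrow\mathbf{(iii)}$ (layer-cake on mollifications, then pass to $u^*$ via $\norm{\mu}\ll\H^{n-1}$), $\mathbf{(iii)}\Rightarrow\mathbf{(iv)}$ (density plus BLT, monotone convergence for $\mu\ge 0$), and $\mathbf{(iv)}\Rightarrow\mathbf{(i)}$ (restrict to $\dot W^{1,1}$ and apply Lemma~\ref{lemma1}) essentially identical to the paper's arguments.

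The genuine difference is in $\mathbf{(i)}\Rightarrow\mathbf{(ii)}$. The paper proceeds in two stages: first it mollifies $\chi_U$ symmetrically, which converges to the precise representative $\chi_U^*$ (equal to $\tfrac12$ on $\partial U$), obtaining only the ``half-boundary'' estimate $\bigl|\mu(\mathrm{Int}(U))+\tfrac12\mu(\partial U)\bigr|\le\norm{\FF}_\infty\H^{n-1}(\partial U)$; then it uses a one-sided distance cutoff $F_h$ supported \emph{outside} $\overline U$ to get $|\mu(\overline U)|\le\norm{\FF}_\infty\H^{n-1}(\partial U)$, and combines the two to reach $C=\norm{\FF}_\infty$ for closed sets but only $C=3\norm{\FF}_\infty$ for open sets. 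Your approach---a one-sided cutoff $g_\var\circ d$ supported \emph{inside} $U$ (respectively outside $\overline U$)---converges pointwise to $\chi_U$ (respectively $\chi_{\overline U}$) rather than to $\chi_U^*$, so the boundary never carries weight $\tfrac12$ and you obtain the sharp constant $C=\norm{\FF}_\infty$ for both open and closed sets in one stroke. This is cleaner, and it is precisely what is needed for the norm identity $\norm{\mu}_{BV_{\frac{n}{n-1}}(\R^n)^*}=\min\{\norm{\FF}_\infty:\div\FF=\mu\}$ that you track; the paper recovers the sharp constant only in a subsequent remark via the Gauss--Green formula for sets of finite perimeter from \cite{ctz2}, a considerably heavier tool.
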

 \begin{proof}  Suppose {\bf(i)} holds.  Then for every $\varphi \in C_c^{\infty}(\R^n)$ we have
 \begin{equation}
 \label{manita1}
 \int_{\R^n} \FF \cdot D \varphi dx = -\int_{\R^n} \varphi d\mu.
 \end{equation}
 Let $U \subset \subset \R^n$ be any open set (or closed set) with smooth boundary satisfying $\H^{n-1}(\partial U) < \infty$. Consider the characteristic function $\chi_U$ and a sequence of mollifications
\begin{equation*}
u_k:= \chi_U * \rho_{1/k},
\end{equation*}   
where $\{\rho_{1/k}\}$ is as in the proof of Theorem \ref{pmcondicion}. Then, since $U$ has a smooth boundary, we have 
\begin{equation}
\label{manota}
u_k(x) \to \chi_U^*(x) \text{ pointwise everywhere},
\end{equation}
where $\chi_U^*(x)$ is the precise representative of $\chi_U$ (see \cite[Corollary 3.80]{afp} given by 
\[
\chi_U^*(x)=
\begin{cases}
1&\text{$x\in \textnormal{Int}(U)$,}\\
\frac{1}{2}&\text{$x\in\partial U$,}\\
0&\text{$x\in \R^n \setminus \overline{U}$.}
\end{cases}
\]
We note that $\chi_U^*$ is the same for  $U$ open or closed, since both are the same set of finite perimeter (they differ only on $\partial U$, which is  a set of Lebesgue measure zero).
From \eqref{manita1}, \eqref{manota}, and the dominated convergence theorem we obtain
\begin{eqnarray}
\label{marisol}
\left|\mu ({\rm Int}(U)) + \frac{1}{2} \mu (\partial U)\right| &=& \left|\int_{\R^n} \chi_U^* d\mu \right|=  \lim_{k \to \infty} \left|\int_{\R^n} u_k d\mu \right|\\ 
&=& \lim_{k \to \infty} \left| \int_{\R^n} \FF \cdot D u_k dx \right| \nonumber \\
&\leq & \lim_{k\rightarrow\infty} \norm{\FF}_{\infty} \int_{\R^n} |D u_k| dx \nonumber \\
&= & \norm{\FF}_{\infty} \int_{\R^n} |D \chi_U| = \norm{\FF}_{\infty} \H^{n-1}(\partial U). \nonumber 
\end{eqnarray}
We now let 
\begin{equation*}
K:= \overline{U}.
\end{equation*}
%%%%his argument starts here %%%%%%%%%%%%%%%%%%%%%%
For each $h>0$ we define the function
\begin{equation*}
F_h(x)= 1 - \frac{\min\{d_{K}(x),h\}}{h}, \qquad x \in \R^n,
\end{equation*}
where $d_{K}(x)$ denotes the distance from $x$ to $K$, i.e., $d_{K}(x)=\inf\{|x-y|: y\in K\}$.
 Note that $F_h$ is a Lipschitz function such that $F_h(x) \leq 1$, $F_h(x)=1$ if $x \in  K$ and $F_h(x)=0$ if $d_{K}(x) \geq h$. Moreover, $F_h$ is differentiable $\mathcal{L}^n$-almost everywhere and
\begin{equation*}
%\label{pata}
|DF_h(x)| \leq \frac{1}{h} \text{ for } \mathcal{L}^n\text{-a.e. } x \in \R^n.
\end{equation*}
By standard smoothing techniques, \eqref{manita1} holds for the Lipschitz function $F_h$. Therefore,
\begin{eqnarray}
\label{pata1}
\left| \int_{\R^n} F_h d\mu \right|&= & \left| \int_{\R^n} \FF \cdot DF_h dx \right|. 
\end{eqnarray}
Since $F_h \to \chi_{K}$ pointwise, it follows from the dominated convergence theorem that
\begin{equation}
\label{pata2}
|\mu (K)| = \left|\int_{\R^n} \chi_{K} d\mu \right|=  \lim_{h \to 0} \left|\int_{\R^n} F_h d\mu \right|.
\end{equation}
On the other hand, using the coarea formula for Lipschitz maps, we have
\begin{eqnarray}
\label{pata3}
 \left| \int_{\R^n} \FF \cdot DF_h dx\right|  &\leq& \norm{\FF}_{\infty}  \int_{\R^n}| DF_h| dx \\
&= &  \norm{\FF}_{\infty}\frac{1}{h} \int_{\{0 < d_{ K} < h \} } |D d_{ K}| dx \nonumber \\  
&= & \norm{\FF}_{\infty} \frac{1}{h} \int_{0}^{h} \H^{n-1} (d_{ K}^{-1} (t) ) dt \nonumber \\
&=& \norm{\FF}_{\infty} \H^{n-1} (d_{ K}^{-1} (t_e^h))\nonumber,
\end{eqnarray}
where $0<t^h_e<h$, and $d_{ K}^{-1} (t_e^h) \subset (\R^n \setminus K)$. Because $K$ is smoothly bounded, it follows that
\begin{equation}
\label{pata4}
\H^{n-1} (d_{ K}^{-1}(t_e^h)) \to \H^{n-1}(\partial K)  \text{ as } h \to 0.
\end{equation}
Since $K= \overline{U}$ and $\partial K= \partial U$, it follows from \eqref{pata1}-\eqref{pata4} that 
%\textcolor{red}{ (In this alternative way, we get the right estimate for $|\mu(\overline{U})|$ and thus my question is if we can use closed sets in Lemma 2.9 %instead of open sets so that we don't need to use my result in Theorem 4.6) }
\begin{equation}
\label{johnpaul}
|\mu(\overline{U})| \leq \norm{\FF}_{\infty} \H^{n-1}(\partial U).
\end{equation}
From \eqref{marisol} and \eqref{johnpaul} we conclude that, for any open set (or closed) $U \subset \subset \R^n$ with smooth boundary and finite perimeter,
$$\frac{1}{2}|\mu(\partial U)|= \left| \mu(\overline{U}) -[\mu({\rm Int}(U)) + \frac{1}{2}\mu(\partial U)]\right| \leq 2\norm{\FF}_{\infty} \H^{n-1}(\partial U),$$
and hence
\begin{equation*}
|\mu({\rm Int}(U))| \leq  3\norm{\FF}_{\infty} \H^{n-1}(\partial U).
\end{equation*}

This completes the proof of {\bf(i)} $\Rightarrow$ {\bf (ii)} with $C=\norm{\FF}_{\infty}$ for closed sets and $C=3\norm{\FF}_{\infty}$ for open sets.

 We proceed now to show that {\bf(ii)} $\Rightarrow$ {\bf(iii)}. Corollary \ref{pmcondition2} says that $\norm{\mu} << \H^{n-1}$, which proves the first part of {\bf(iii)}.  We let
 $u \in BV_c^\infty(\rnm)$ 
 \begin{comment}
 
 and we consider its positive and negative
parts $u^+$ and $u^-$. We note that, 
\begin{equation*}
u^+=f_{max}(u), \quad {\rm where~}  f_{max}(x)=\max(0,x) 
\end{equation*}
and
\begin{equation*}
u^-=f_{min}(u), \quad {\rm where~} f_{min}(x)=\min(0,x).
\end{equation*}
Since $f_{max}$ and $f_{min}$ are Lipschitz functions, it follows that (see Ambrosio-Fusco-Pallara \cite{afp}, Theorem 3.99) both
$u^+$ and $u^-$ belong to $BV_c^\infty(\rnm)$. Let $\rho_\ve$ be a
stardard sequence of mollifiers.
\end{comment}
and we consider
 the convolutions $\rho_\ve * u$ and
define
\begin{equation*} 
A_t^\ve:= \{ \rho_\ve *u  \geq  t\} \text{ for }t>0, \text{ and } B_t^\ve:= \{ \rho_\ve *u  \leq t\} \text{ for }t<0.
\end{equation*}
Since $\rho_\ve * u \in C_c^\infty(\rnm)$
it follows that $\po A_t^\ve$ and $\po B_t^\ve $ are smooth  for a.e. $t$. Applying Lemma \ref{laformula} we compute
\begin{eqnarray}
\left|\int_{\rnm}\rho_\ve*u d\mu\right|&=&\left|\int^\infty_0\mu(A_t^\ve) dt - \int^0_{-\infty}\mu(B_t^\ve) dt\right| \nonumber  \\
&\leq & \int^\infty_0|\mu(A_t^\ve)| dt + \int^0_{-\infty}|\mu(B_t^\ve)| dt \nonumber \\
&\leq& C  \int^\infty_0 \H^{n-1}(\po A_t^\ve) dt  + C  \int^0_{-\infty} \H^{n-1}(\po B_t^\ve)\,dt \textnormal{, by } {\bf (ii)} \nonumber \\
%&\leq& C \left( \left| \int^\infty_0 \H^{n-1}(\po A_t^\ve) dt \right| + \left| \int^0_{-\infty} \H^{n-1}(\po B_t^\ve)\,dt\right| \right)  \nonumber \\
&=& C \int_{\rnm}|D (\rho_\ve*u)|\,dx  \text {, by the Coarea Formula}\nonumber \\
& \leq&  C \int_{\rnm}|D u|. \label{masesperanza}
\end{eqnarray}

\begin{comment}
Since $\rho_\ve * u^+ \in C_c^\infty(\rnm)$
it follows that $\po A_t^\ve$ is smooth  for a.e. $t$.
%and thus, for each $\ve$ and almost every $t$:
%\begin{equation}
% A_t^\ve=(A_t^\ve)^1.
%\end{equation}
Since $\rho_\ve*u^+ \geq 0$ we can now compute:
\begin{eqnarray}
\left|\int_{\rnm}\rho_\ve*u^+ d\mu\right|&=&\left|\int^\infty_0\mu(A_t^\ve) dt\right| \nonumber  \\
&\leq& \left| \int^\infty_0 C\H^{n-1}(\po A_t^\ve)\,dt\right|
 \nonumber, \textnormal{ by } {\bf (ii)} \\
&=& C \int_{\rnm}|D (\rho_\ve*u^+)|\,dx \nonumber \\
& \leq& C \int_{\rnm}|D u^+|\,dx\leq C \int_{\rnm}|D u|. 
\end{eqnarray}
In the same way we obtain
\begin{equation} \label{fe}
\left|\int_{\rnm}\rho_\ve*u^- d\mu\right| \leq 
 C \int_{\rnm}|D u|.
\end{equation}

\end{comment}

 We let $u^*$ denote the precise representative of $u$. We have that
  (see Ambrosio-Fusco-Pallara \cite{afp}, Chapter 3, Corollary 3.80):
\begin{equation}
\label{condic}
\rho_\ve* u \to u^* \quad \H^{n-1}\text{-}\tn{almost everywhere}.
\end{equation}
%From (\ref{esperanza}) and (\ref{fe})  we obtain
%\begin{equation}
%\label{help2}
%\left|\int_{\rnm}\rho_\ve* u \,\, d\mu \right|= \left|\int_{\rnm} (\rho_\ve* u^+ - \rho_%\ve*u^-) d\mu \right| \leq 2C\int_{\rnm}|D u|. 
%\end{equation}
We now let $\ve \to 0$ in \eqref{masesperanza}. Since $u$ is bounded and $\norm{\mu} << \H^{n-1}$, 
 \eqref{condic} and the dominated convergence theorem yield
\begin{equation*}
\left|\int_{\rnm}u^* d\mu\right| \leq C\int_{\rnm}|D u|,
\end{equation*}
which completes the proof of  {\bf (ii)} $\Rightarrow$ {\bf (iii)} with the same constant $C$ as given in {\bf (ii)}.

From {\bf(iii)} we obtain that the linear operator
\begin{equation}\label{TU}
 T(u):= <\mu,u>=\int_{\rnm}u^* d\mu, \quad u \in BV_c^\infty(\rnm)
\end{equation} 
is continuous and hence it can be uniquely extended, since 
$BV_c^\infty(\rnm)$ is dense in $BV_{\frac{n}{n-1}}(\rnm)$ (Lemma \ref{density}), 
to the space $BV_{\frac{n}{n-1}}(\rnm)$. 

Assume now that $\mu$ is non-negative. We take
$u \in BV_{\frac{n}{n-1}}(\rnm)$ and consider the positive and negative parts $(u^*)^+$
and $(u^*)^-$ of the representative $u^*$. With $\phi_k$ as in Lemma \ref{density}, using \eqref{TU} we have 
$$
T([\phi_k(u^*)^+] \wedge j )= \int_{\rnm}[\phi_k(u^*)^+]\wedge j \,  d\mu, \quad j=1,2, \dots 
$$
We first let $j\rightarrow\infty$ and then   $k\rightarrow\infty$. Using Lemma \ref{density}, the continuity of $T$, and the  monotone convergence theorem we find
$$
T((u^*)^+)= \int_{\rnm}(u^*)^+ d\mu.
$$
We proceed in the same way for  $(u^*)^-$ and thus by linearity we conclude
$$
T(u)=T((u^*)^+)-T((u^*)^-)=\int_{\rnm}(u^*)^+ -(u^*)^- d\mu=\int_{\rnm}u^* d\mu.
$$
To prove that {\bf (iv)} implies {\bf (i)} we take $\mu \in BV_{\frac{n}{n-1}}(\R^n)^*$. Since $\dot{W}^{1,1}(\R^n)\subset BV_{\frac{n}{n-1}}(\R^n)$ then
\begin{equation*}
\tilde{\mu}:= \mu \rtangle \dot{W}^{1,1}(\R^n) \in \dot{W}^{1,1}(\R^n)^*, 
\end{equation*}        
and therefore Lemma  \ref{lemma1} implies that there exists $\FF \in L^\infty(\R^n,\R^n)$ such that $\div \FF =\tilde{\mu}$ and thus, since $C_c^{\infty} \subset \dot{W}^{1,1}(\R^n)$, we conclude that
$\div \FF= \mu $
in the sense of distributions. 
\end{proof}

\begin{comment}
\begin{theorem}
\label{superestrella}
Let  $\mu\in \mathcal{M}(\R^n)$. Then, the following are equivalent:

{\bf (i)} There exists a vector field $F\in L^\infty (\R^n, \R^n)$ such that $\div F=\mu$.

{\bf (ii)} $|\mu(U)| \leq C \mathcal{H}^{n-1}(\partial U)$ for any smooth open set $U \subset \R^n$ with $\mathcal{H}^{n-1}(\partial U) < \infty$.

{\bf (iii)} $\mu << \mathcal{H}^{n-1}$ and
\begin{equation}
\left|\int_{\R^n} u^* d\mu \right| \leq C \int_{\R^n}|Du|, \quad u \in BV(\R^n),
\end{equation}
where $u^*$ is the representative that is defined $\H^{n-1}$-almost everywhere. 

   {\bf (iv)} $\mu\in BV(\R^n)^*$.

\end{theorem}
\end{comment}

\begin{remark}
Inequality \eqref{johnpaul} can also be obtained be means of the (one-sided) \emph{outer Minskowski content}. Indeed, since $|D d_{K}|=1$ a.e., we find  
\begin{eqnarray*}
 \left| \int_{\R^n} \FF \cdot DF_h dx\right|  &\leq& \norm{\FF}_{\infty}  \int_{\R^n}| DF_h| dx \\
&= &  \norm{\FF}_{\infty}\frac{1}{h} |\{0 < d_{ K} < h \}|.  
\end{eqnarray*}
Now sending $h\rightarrow 0^+$ and using \eqref{pata1}-\eqref{pata2} we have 
$$|\mu(K)| \leq \norm{\FF}_{\infty} \mathcal{SM}(K)= \norm{\FF}_{\infty} \mathcal{H}^{n-1}(\partial K),$$
where $\mathcal{SM}(K)$ is the outer Minskowski content of $K$ (see \cite[Definition 5]{acv}), and the last equality follows from \cite[Corollary 1]{acv}. 
This argument also holds in the case $U$ only has a Lipschitz boundary. Note that in this case we can only say that the limit in $\eqref{manota}$ holds $\mathcal{H}^{n-1}$-a.e., but this is enough for \eqref{marisol} since $\norm{\mu}<<\mathcal{H}^{n-1}$ by   \eqref{manita1} and \cite[Lemma 2.25]{ctz2}.
\end{remark}

\begin{remark}
 If  $\FF\in L^{\infty}(\rnm,\rnm)$ satisfies $\div \FF= \mu$ then, for any bounded
set of finite perimeter $E$, the Gauss-Green formula 
proved in Chen-Torres-Ziemer \cite{ctz2} yields,
\begin{equation*}
\mu(E^1 \cup \po^*E)=\int_{E^1 \cup \po^* E}\div \FF  =  \int_{\po^* E}(\mathscr{F}_e\cdot
\bnu)(y)d \mathcal{H}^{n-1}(y) 
\end{equation*}
and
\begin{equation*}
\mu(E^1 )= \int_{E^1}\div \FF  =  \int_{\po^* E}(\mathscr{F}_i \cdot
\bnu)(y)d \mathcal{H}^{n-1}(y).
\end{equation*}
Here $E^1$ is the measure-theoretic interior of $E$ and $\partial^* E$ is the reduced boundary of $E$.
The estimates $$\norm{\mathscr{F}_e\cdot \bnu}_{L^{\infty}(\po^* E)}
\leq \norm{\FF}_{L^{\infty}} {\rm ~and~} \norm{\mathscr{F}_i\cdot \bnu}_{L^{\infty}(\po^* E)}
\leq \norm{\FF}_{L^{\infty}}$$ give
\begin{equation*}
|\mu (E^1 \cup \po^*E)|=|\mu(E^1)+ \mu(\po^*E)| \leq \norm{\FF}_{L^{\infty}}\H^{n-1}(\po^* E)
\end{equation*}
and 
\begin{equation*} 
|\mu (E^1)| \leq \norm{\FF}_{L^{\infty}}\H^{n-1}(\po^* E).
\end{equation*}
Therefore, 
 $$|\mu (\po^* E)| \leq \norm{\FF}_{L^{\infty}}\H^{n-1}(\po^* E) + |\mu(E^1)| \leq
 2\norm{\FF}_{L^{\infty}}\H^{n-1}(\po^* E).$$We note  that this provides another proof of
 {\bf(i)} $\Rightarrow$ {\bf(ii)} (with $C=\norm{\FF}_{\infty}$  for both open and closed smooth sets) since for any bounded open (resp. closed) set $U$ with smooth boundary 
  we have $U=U^1$ (resp. $U=U^1\cup \partial^*U$).  
\end{remark}

\begin{comment}
\begin{definition}
$\mathcal{M}_{loc} \cap w^{1,1}(\R^n)^*= \{T \in w^{1,1}(\R^n)^* : T(\varphi)=\int_{\R^n} \varphi d\mu$ for some $\mu \in \mathcal{M}_{loc}$ and for all $\varphi \in C_c^{\infty}(\R^n)\}$. By density, the action $<\mu,f>$ makes sense for all $f \in w^{1,1}(\R^n)$. 
 
\end{definition}

\begin{definition}
\label{panza}
$\mathcal{M}_{loc} \cap BV_{\frac{n}{n-1}}(\R^n)^*=\{T \in BV_{\frac{n}{n-1}}(\R^n)^*: T(\mu)=\int_{\R^n} \varphi^* d\mu$ for some $\mu \in \mathcal{M}_{loc}$ and for all $\varphi \in BV_c^{\infty}\}$. 
Note then that the action $<\mu,f>$ could be uniquely defined for all $f\in BV_{\frac{n}{n-1}}(\R^n)$ (because of the density of $BV_c^{\infty}(\R^n))$.
\end{definition}

\begin{remark}
In definition \ref{panza}, if we use $C_c^{\infty}(\R^n)$ instead of $BV_c^{\infty}(\R^n)$, then by the Hahn-Banach Theorem there exist a non-zero $T \in BV_{\frac{n}{n-1}}(\R^n)^*$ that is represented by the zero measure. This would cause a problem of injectivity in the next theorem.
\end{remark}
\end{comment}

We recall the spaces defined in Definitions \ref{panza1} and \ref{panza2}. We now show the following new result.
\begin{theorem}
\label{nuevo}
Let $\mathcal{E}:= \mathcal{M}_{loc} \cap BV_{\frac{n}{n-1}}(\R^n)^*$ and $\mathcal{F}:= \mathcal{M}_{loc} \cap \dot{W}^{1,1}(\R^n)^*$. Then $\mathcal{E}$ and $\mathcal{F}$ are isometrically isomorphic.
\end{theorem}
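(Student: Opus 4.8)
The plan is to take $\Phi\colon\mathcal{E}\to\mathcal{F}$ to be the restriction map $\mu\mapsto\mu\rtangle\dot{W}^{1,1}(\R^n)$ and to show it is a linear isometric bijection. It is well defined: since $\dot{W}^{1,1}(\R^n)\hookrightarrow BV_{\frac{n}{n-1}}(\R^n)$ (Definition \ref{homogeneous}), restricting a functional of $\mathcal{E}$ gives a bounded functional on $\dot{W}^{1,1}(\R^n)$, and because $\varphi^*=\varphi$ for $\varphi\in C_c^\infty(\R^n)$, by Definition \ref{panza2} this restriction is still represented on $C_c^\infty(\R^n)$ by the measure $\mu$, hence lies in $\mathcal{F}$. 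Linearity is clear. For injectivity, if $\Phi(\mu)=0$ then $\int_{\R^n}\varphi\,d\mu=0$ for all $\varphi\in C_c^\infty(\R^n)$, so $\mu=0$ as a Radon measure, and then Definition \ref{panza2} together with the density of $BV_c^{\infty}(\R^n)$ in $BV_{\frac{n}{n-1}}(\R^n)$ (Theorem \ref{density}) forces the associated functional of $\mathcal{E}$ to vanish.

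For surjectivity I would take $T\in\mathcal{F}$, represented by $\mu\in\mathcal{M}_{loc}(\R^n)$, use Lemma \ref{lemma1} to write $\mu=\div\FF$ with $\FF\in L^\infty(\R^n,\R^n)$, and then invoke the implication {\bf(i)} $\Rightarrow$ {\bf(iv)} of Theorem \ref{superestrella}: this gives that $\mu$ extends to an element of $BV_{\frac{n}{n-1}}(\R^n)^*$ whose action on $BV_c^{\infty}(\R^n)$ is $u\mapsto\int_{\R^n}u^*\,d\mu$, i.e.\ $\mu\in\mathcal{E}$. Then $\Phi(\mu)$ and $T$ are bounded functionals on $\dot{W}^{1,1}(\R^n)$ that agree on $C_c^\infty(\R^n)$, hence coincide by density of $C_c^\infty(\R^n)$ in $\dot{W}^{1,1}(\R^n)$; thus $\Phi(\mu)=T$.

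It remains to check that $\Phi$ is an isometry, which is the crux. One inequality is free: since $\dot{W}^{1,1}(\R^n)$ embeds isometrically into $BV_{\frac{n}{n-1}}(\R^n)$ (for $u\in\dot{W}^{1,1}(\R^n)$ one has $\norm{Du}(\R^n)=\int_{\R^n}|Du|\,dx$), restricting a functional cannot increase its operator norm, so $\norm{\Phi(\mu)}_{\mathcal{F}}\le\norm{\mu}_{\mathcal{E}}$. For the reverse, I would use Lemma \ref{lemma1} to choose $\FF\in L^\infty(\R^n,\R^n)$ with $\div\FF=\mu$ and $\norm{\FF}_{L^\infty(\R^n,\R^n)}=\norm{\Phi(\mu)}_{\mathcal{F}}$, and then run the chain {\bf(i)} $\Rightarrow$ {\bf(ii)} $\Rightarrow$ {\bf(iii)} of Theorem \ref{superestrella} with its \emph{sharp} constant: the level sets $\{\rho_\ve*u\ge t\}$ and $\{\rho_\ve*u\le t\}$ appearing in the proof of {\bf(ii)} $\Rightarrow$ {\bf(iii)} are closed, so {\bf(ii)} is applied there with $C=\norm{\FF}_{L^\infty(\R^n,\R^n)}$, yielding $\left|\int_{\R^n}u^*\,d\mu\right|\le\norm{\FF}_{L^\infty(\R^n,\R^n)}\int_{\R^n}|Du|$ for every $u\in BV_c^{\infty}(\R^n)$; the density of $BV_c^{\infty}(\R^n)$ in $BV_{\frac{n}{n-1}}(\R^n)$ then gives $\norm{\mu}_{\mathcal{E}}\le\norm{\FF}_{L^\infty(\R^n,\R^n)}=\norm{\Phi(\mu)}_{\mathcal{F}}$. (One can also avoid Lemma \ref{lemma1} at this step and argue directly by applying $\Phi(\mu)$ to the mollifications $\rho_\ve*u\in C_c^\infty(\R^n)$, using $\int_{\R^n}|D(\rho_\ve*u)|\,dx\le\int_{\R^n}|Du|$ and passing to the limit via $\rho_\ve*u\to u^*$ $\H^{n-1}$-a.e.\ together with $\norm{\mu}\ll\H^{n-1}$ and dominated convergence.) I expect this reverse inequality to be the only genuinely delicate point — in particular, keeping the constant exactly equal to $\norm{\Phi(\mu)}_{\mathcal{F}}$ rather than a fixed multiple of it is precisely why the sharp form of {\bf(ii)} $\Rightarrow$ {\bf(iii)} in Theorem \ref{superestrella} is required; everything else is bookkeeping between the functional and measure descriptions of $\mathcal{E}$ and $\mathcal{F}$.
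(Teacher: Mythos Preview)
Your proposal is correct and follows essentially the same route as the paper: define the restriction map $\mathcal{E}\to\mathcal{F}$, prove injectivity via the density of $BV_c^\infty(\R^n)$, prove surjectivity by combining Lemma~\ref{lemma1} with the implications {\bf(i)}$\Rightarrow${\bf(ii)}$\Rightarrow${\bf(iii)} of Theorem~\ref{superestrella}, and obtain the isometry from the sharp constant $C=\norm{\FF}_{L^\infty}$ available for closed level sets. You are in fact a bit more explicit than the paper about why the norm equality (rather than mere inequality) holds, correctly isolating the point that the sets $\{\rho_\ve*u\ge t\}$ are closed so that the sharp constant from {\bf(i)}$\Rightarrow${\bf(ii)} carries through.
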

\begin{proof}
We define a map $S:\mathcal{E} \to \mathcal{F}$
as
\begin{equation*}
 S(T)=T \rtangle \dot{W}^{1,1}.
\end{equation*}
Clearly, $S$ is a linear map. We need to show that $S$ is 1-1 and on-to, and $\norm{S(T)}_{\dot{W}^{1,1}(\R^n)^*}=\norm{T}_{BV_{\frac{n}{n-1}}(\R^n)^*}$ for all
$T\in \mathcal{E}$.
\begin{comment}

\begin{equation}
\norm{S(T)}_{\dot{W}^{1,1}(\R^n)^*} \leq C_1 \norm{T}_{BV_{\frac{n}{n-1}}(\R^n)^*}
\end{equation}

and
\begin{equation}
\norm{S^{-1}(H)}_{BV_{\frac{n}{n-1}}(\R^n)^*} \leq C_2 \norm{H}_{\dot{W}^{1,1}(\R^n)^*}
\end{equation}
\end{comment}
In order to show the injectivity we assume that $S(T)=0 \in \mathcal{F}$ for some $T\in \mathcal{E}$. Then
\begin{equation*}
 T(u)=0 \text{ for all } u \in \dot{W}^{1,1}(\R^n).
\end{equation*}
Thus, if $\mu$ is the measure associated to $T \in \mathcal{E}$, then
\begin{equation*}
\int_{\R^n} \varphi d \mu= T(\varphi)=0 \text{ for all } \varphi \in C_c^{\infty}(\R^n),
\end{equation*}
which implies that $\mu=0$. Now, by definition of $\mathcal{E}$, we have
\begin{equation*}
T(u)=\int_{\R^n} u^* d\mu =0 \text{ for all  } u \in BV_c^{\infty}(\R^n),
\end{equation*}
which implies, by Theorem \ref{BLT} and Theorem \ref{density}, that 
\begin{equation*}
T\equiv 0 \text{ on } BV_{\frac{n}{n-1}(\R^n)}.
\end{equation*}

We now proceed to show the surjectivity and take $H \in \mathcal{F}$. Thus, there exists 
$\mu \in \mathcal{M}_{loc}(\R^n)$ such that
\begin{equation*}
\int_{\R^n} \varphi d\mu = H(\varphi)  \text{ for all } \varphi \in C_c^{\infty}(\R^n). 
\end{equation*}
From Lemma \ref{lemma1}, since $H \in \dot{W}^{1,1}(\R^n)^*$, there exists a bounded vector field $\FF \in L^{\infty}(\R^n,\R^n)$ such that
\begin{equation}
\label{aquienescojo}
\div \FF=\mu \text{ in the  distributional sense and } \norm{H}_{\dot{W}^{1,1}(\R^n)^*}=\norm {\mu}_{\dot{W}^{1,1}(\R^n)^*} = \norm{\FF}_{L^{\infty}(\R^n,\R^n)}.
\end{equation}
Now, from the proof of Theorem \ref{superestrella}, ${\bf (i)} \Rightarrow {\bf (ii)} \Rightarrow {\bf (iii)}$,  it follows that 
\begin{equation*}
\norm{\mu} << \H^{n-1},
\end{equation*}
\begin{equation*}
|\mu(U)| \leq \norm{\FF}_{\infty} \H^{n-1}(\partial U)
\end{equation*}
for all closed and smooth sets $U \subset \subset \R^n$, and 
\begin{equation*}
\left|\int_{\R^n} u^* d \mu \right| \leq \norm{\FF}_{L^{\infty}(\R^n,\R^n)} \norm{u}_{BV_{\frac{n}{n-1}}(\R^n)} \text{ for all } u \in BV_c^{\infty}(\R^n).
\end{equation*}

Hence, $\mu \in BV_c^{\infty}(\R^n)^*$ and from \eqref{aquienescojo} we obtain
\begin{equation*}
\norm {\mu}_{BV_c^{\infty}(\R^n)^*} = \norm{\FF}_{L^{\infty}(\R^n,\R^n)}=\norm {\mu}_{\dot{W}^{1,1}(\R^n)^*}.
\end{equation*}

From Theorem \ref{BLT}, it follows that $\mu$ can be uniquely extended to a continuous linear functional $\hat{\mu} \in BV_{\frac{n}{n-1}}(\R^n)^*$ and clearly,
\begin{equation*}
 S(\hat{\mu})=\mu,
\end{equation*}
which implies that $S$ is surjective. According to Theorem \ref{BLT}, this extension preserves the operator norm and thus
\begin{equation*}
\norm{S^{-1}(\mu)}_{BV_\frac{n}{n-1}(\R^n)^*}=\norm{\hat{\mu}}_{BV_\frac{n}{n-1}(\R^n)^*}=
 \norm{\mu}_{BV_c^{\infty}(\R^n)^*} =\norm {\mu}_{\dot{W}^{1,1}(\R^n)^*},
\end{equation*}
which shows that $\mathcal{E}$ and $\mathcal{F}$ are isometrically isomorphic.
\end{proof}

\section{On an issue raised by Meyers and Ziemer}\label{MZ-ex}
 In this section, using the result of Theorem \ref{superestrella},
we construct a locally integrable function $f$ such that $f\in BV(\R^n)^{*}$ but $|f|\not \in BV(\R^n)^{*}$.  This example settles an issue raised by Meyers and Ziemer in \cite[page 1356]{MZ}. We mention that this kind of highly oscillatory function  appeared in \cite{MV} in a different context.    

\begin{proposition}\label{exam} Let $f(x)=\epsilon |x|^{-1-\epsilon}\sin(|x|^{-\epsilon}) + (n-1)|x|^{-1}\cos(|x|^{-\epsilon})$,
where $0<\epsilon<n-1$ is fixed. Then 
\begin{equation}\label{divform}
f(x) = {\rm div} \, [x|x|^{-1} \cos(|x|^{-\epsilon})].
\end{equation}
Moreover,  there exists a sequence $\{r_k\}$  decreasing to zero such that
\begin{equation}\label{Lfplus}
\int_{B_{r_k}(0)} f^+(x) dx \geq c\, r_k^{n-1-\epsilon}
\end{equation}
for a constant $c=c(n,\epsilon)>0$ independent of $k$. Here $f^+$ is the positive part of $f$. Thus by Theorem \ref{superestrella}
we see that $f$ belongs to $BV(\R^n)^{*}$, whereas $|f|$ does not. 
\end{proposition}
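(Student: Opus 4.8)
The plan is: verify \eqref{divform} classically on $\R^n\setminus\{0\}$ and promote it to a distributional identity on $\R^n$; prove the lower bound \eqref{Lfplus}; then read off the two membership assertions from Theorem~\ref{superestrella}. For the first step, set $\FF(x)=x|x|^{-1}\cos(|x|^{-\epsilon})$, so $|\FF|\le1$ and $\FF\in L^\infty(\R^n,\R^n)$. On $\R^n\setminus\{0\}$ the field $\FF$ is smooth and radial, $\FF=h(|x|)\,x/|x|$ with $h(\rho)=\cos(\rho^{-\epsilon})$, and the elementary identity $\div\!\big(h(|x|)\tfrac{x}{|x|}\big)=h'(|x|)+\tfrac{n-1}{|x|}h(|x|)$ evaluates to exactly $f$; that is \eqref{divform} away from the origin. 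Since $0<\epsilon<n-1$, the most singular term of $f$ is $|x|^{-1-\epsilon}$, integrable near $0$ because $1+\epsilon<n$, so $f\in L^1_{loc}(\R^n)$ and $f\,\mathcal{L}^n\in\M_{loc}(\R^n)$. To get $\div\FF=f$ in $\mathcal{D}'(\R^n)$, test against $\varphi\in C_c^\infty(\R^n)$, integrate by parts on $\{|x|>\delta\}$ and send $\delta\to0$: the contribution of the sphere $\{|x|=\delta\}$ is $\le\|\FF\|_\infty\,\H^{n-1}(\partial B_\delta)=O(\delta^{n-1})$, and $\int_{\{|x|>\delta\}}f\varphi\,dx\to\int_{\R^n}f\varphi\,dx$ by dominated convergence.

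For \eqref{Lfplus}, write $f^+=\tfrac12(|f|+f)$, so that
\[\int_{B_r}f^+\,dx\ \ge\ \tfrac12\int_{B_r}|f|\,dx-\tfrac12\,\Big|\int_{B_r}f\,dx\Big|.\]
The last term is harmless: applying the divergence theorem on $B_r\setminus B_\delta$ and letting $\delta\to0$ as above gives $\int_{B_r}f\,dx=\H^{n-1}(\mathbb{S}^{n-1})\,r^{n-1}\cos(r^{-\epsilon})$, which is $O(r^{n-1})$. For the main term use $|f(x)|\ge\epsilon|x|^{-1-\epsilon}\big|\sin(|x|^{-\epsilon})\big|-(n-1)|x|^{-1}$; the subtracted term integrates over $B_r$ to $O(r^{n-1})$, while passing to polar coordinates and substituting $s=\rho^{-\epsilon}$ gives
\[\int_{B_r}|x|^{-1-\epsilon}\,\big|\sin(|x|^{-\epsilon})\big|\,dx=\frac{\H^{n-1}(\mathbb{S}^{n-1})}{\epsilon}\int_{r^{-\epsilon}}^{\infty}s^{-\frac{n-1}{\epsilon}}\,|\sin s|\,ds.\]
The exponent $\beta:=\tfrac{n-1}{\epsilon}$ is $>1$ exactly because $\epsilon<n-1$, so the integral converges; summing the contributions of the intervals $[m\pi+\tfrac\pi4,\,m\pi+\tfrac{3\pi}4]$, where $|\sin s|\ge\tfrac{1}{\sqrt2}$, yields $\int_a^\infty s^{-\beta}|\sin s|\,ds\ge c(n,\epsilon)\,a^{1-\beta}$ for $a\ge1$, and $(r^{-\epsilon})^{1-\beta}=r^{\,n-1-\epsilon}$. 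Collecting the bounds, $\int_{B_r}f^+\,dx\ge c_1r^{\,n-1-\epsilon}-c_2r^{n-1}$, so \eqref{Lfplus} holds for all small $r$, in particular along any sequence $r_k\downarrow0$ (one may take $r_k^{-\epsilon}=2k\pi$).

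Finally, from \eqref{divform} and $\FF\in L^\infty(\R^n,\R^n)$, Theorem~\ref{superestrella} together with Corollary~\ref{iso-R} (or directly \cite{PT}) gives $f\in BV(\R^n)^*$. If $|f|$ were also in $BV(\R^n)^*$, then $|f|\,\mathcal{L}^n$ would be a nonnegative measure in $BV(\R^n)^*$, so the implication ${\bf(iv)}\Rightarrow{\bf(ii)}$ of Theorem~\ref{superestrella} (equivalently, the Meyers--Ziemer ball condition \cite{MZ}) would force $\int_{B_{r_k}}|f|\,dx\le C\,\H^{n-1}(\partial B_{r_k})=C'r_k^{n-1}$; combined with $\int_{B_{r_k}}|f|\,dx\ge\int_{B_{r_k}}f^+\,dx\ge c\,r_k^{\,n-1-\epsilon}$ this gives $c\le C'r_k^{\epsilon}\to0$, a contradiction, so $|f|\notin BV(\R^n)^*$. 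The only real work is the lower bound in the second step, and its heart is the observation that $\epsilon<n-1$ is precisely what makes $\beta=\tfrac{n-1}{\epsilon}>1$: the tail $\int_{r^{-\epsilon}}^\infty s^{-\beta}|\sin s|\,ds$ is then of order $r^{\,n-1-\epsilon}$, which strictly beats the perimeter scale $r^{n-1}$ as $r\to0$; everything else is routine bookkeeping with polar coordinates and $O(r^{n-1})$ error terms.
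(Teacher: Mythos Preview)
Your proof is correct and takes a genuinely different route from the paper's. The paper argues directly with $f^+$: it fixes $r_k=(\pi/6+2k\pi)^{-1/\epsilon}$, passes to polar coordinates with the substitution $x=t^{-\epsilon}$, and then restricts the integration to the arithmetic progression of intervals $[\pi/6+2k\pi+2i\pi,\ \pi/2+2k\pi+2i\pi]$ on which \emph{both} $\sin$ and $\cos$ are nonnegative, so that $f$ itself is positive there; a comparison of the contribution of these intervals against the gaps between them recovers a fixed fraction of $\int_{r_k^{-\epsilon}}^\infty x^{-(n-1)/\epsilon}\,dx$. Your argument instead writes $f^+=\tfrac12(|f|+f)$, disposes of $\int_{B_r}f$ exactly by the divergence theorem as an $O(r^{n-1})$ term, and bounds $|f|$ from below by the dominant oscillatory piece $\epsilon|x|^{-1-\epsilon}|\sin(|x|^{-\epsilon})|$ minus the lower-order $(n-1)|x|^{-1}$; the substitution $s=\rho^{-\epsilon}$ then reduces everything to the tail $\int_{r^{-\epsilon}}^\infty s^{-(n-1)/\epsilon}|\sin s|\,ds$, whose size $\sim r^{n-1-\epsilon}$ is immediate from $\beta=(n-1)/\epsilon>1$. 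Your decomposition makes the structure ``leading term versus $O(r^{n-1})$ errors'' transparent and yields \eqref{Lfplus} for \emph{all} small $r$, not just along a special sequence, while the paper's choice of intervals avoids any triangle-inequality losses and keeps explicit constants throughout.
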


\begin{proof} The equality \eqref{divform} follows by a straightforward computation. To show \eqref{Lfplus}, we
let  $r_{k} =(\pi/6+2k\pi)^{\frac{-1}{\epsilon}}$ for $k=1, 2, 3, \dots$ Then we have
\begin{eqnarray*}
\int_{B_{r_{k}}(0)} f^{+}(x) dx%&=& \int_{B_{r_{k}}(0)} |x|^{-\epsilon-s} \sin^{+}(|x|^{-\epsilon}) dx\\
&=& s(n)\int_{0}^{r_{k}} t^{n} [\epsilon\, t^{-1-\epsilon}\sin(t^{-\epsilon})+ (n-1)t^{-1}\cos(t^{-\epsilon})]^{+} \frac{dt}{t}\\
&=& \frac{s(n)}{\epsilon}\int_{r_{k}^{-\epsilon}}^{\infty} x^\frac{-n}{\epsilon} [\epsilon\, x^\frac{\epsilon+1}{\epsilon} \sin(x) + (n-1) x^{\frac{1}{\epsilon}}\cos(x)]^{+} \frac{dx}{x}\\
&\geq& \frac{s(n)}{2} \sum_{i=0}^{\infty} \int_{\pi/6 +2k\pi +2i \pi}^{\pi/2+2k\pi +2i \pi} x^\frac{-n+1}{\epsilon}  dx,
\end{eqnarray*}
where $s(n)$ is the area of the unit sphere in $\R^n$. Thus using the elementary  observation
$$  \int_{\pi/2 +2k\pi+2i\pi}^{\pi/6 + 2k\pi+2(i+1)\pi} x^\frac{-n+1}{\epsilon} dx\leq  6 \int_{\pi/6 +2k\pi+2i\pi}^{\pi/2 + 2k\pi+2i\pi} x^\frac{-n+1}{\epsilon}  dx, $$
we find that
\begin{eqnarray*}
\int_{B_{r_{k}}(0)} f^{+}(x) dx&\geq& \frac{s(n)}{14}  \sum_{i=0}^{\infty} 7 \int_{\pi/6 +2k\pi +2i \pi}^{\pi/2+2k\pi +2i \pi} x^\frac{-n+1}{\epsilon} dx\\
&\geq&\frac{s(n)}{14} \sum_{i=0}^{\infty} \left(\int_{\pi/6 +2k\pi +2i \pi}^{\pi/2+2k\pi +2i \pi} x^\frac{-n+1}{\epsilon} dx+ \int_{\pi/2 +2k\pi+2i\pi}^{\pi/6 + 2k\pi+2(i+1)\pi} x^\frac{-n+1}{\epsilon} dx\right)\\
&\geq&\frac{s(n)}{14} \sum_{i=0}^{\infty} \int_{\pi/6 +2k\pi +2i \pi}^{\pi/6+2k\pi +2(i+1) \pi} x^\frac{-n+1}{\epsilon} dx\\
&=& \frac{s(n)}{14}\int_{\pi/6 +2k\pi}^{\infty} x^\frac{-n+1}{\epsilon} dx =\frac{ s(n)\, \epsilon}{14(n-1-\epsilon)} r_{k}^{n-1-\epsilon}.
\end{eqnarray*}

This completes the proof of the proposition.
\end{proof}

%Theorem \ref{superestrella} and Lemma \ref{lemma1} immediately imply the following new result %which states that the set of measures in $BV(\R^n)^*$ coincides with that of $w^{1,1}(\R^n)^*$:
%\begin{theorem}
%\label{nuevo1}
%$\M_{loc}(\R^n) \cap BV(\R^n)^*=\M_{loc}(\R^n) \cap w^{1,1}(\R^n)^*$.
%\end{theorem}
%\begin{proof}
%Let $\mu \in \M(\R^n) \cap BV(\R^n)^*$. Then Theorem \ref{superestrella} implies that there exists a vector field $\FF \in L^{\infty}(\R^n,\R^n)$ such that %$\div \FF= \mu$. Thus, Lemma \ref{lemma1} yields, since $A^*:L^\infty(\R^n,\R^n) \to W^{1,1}(\R^n)^*$ is surjective, that 
%$\mu \in W^{1,1}(\R^n)^*$. Conversely, if $\mu \in W^{1,1}(\R^n)^*$ then by Lemma \ref{lemma1} there exists $\FF \in L^{\infty}(\R^n,\R^n)$ such that %$A^*(\FF)=\div \FF= \mu$, and hence Theorem \ref{superestrella} yields $\mu \in BV(\R^n)^*$
%\end{proof}

\section{The space $BV_0(\Omega)$}
In this section we let $\O \subset \R^n$ be a bounded open set with Lipschitz boundary. We have 
the following well known result concerning the existence of traces of functions in $BV(\O)$
(see for example \cite[Theorem 2.10]{Giusti} and \cite[Theorem 10.2.1]{ABM}):  
\begin{theorem}
%\label{existenceoftrace}
Let $\O$ be a bounded open set with Lipschitz continuous boundary $\partial \O$ and let $u \in BV(\O)$. Then, there exists  a function $\varphi \in L^1(\partial \O)$ such that, for $\H^{n-1}$-almost every $x \in \partial \O$,
\begin{equation*}
\lim_{r \to 0} r^{-n}\int_{B(x,r) \cap \O}|u(y) - \varphi(x)|dy=0.
\end{equation*}
\end{theorem}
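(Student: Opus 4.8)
The plan is to reduce the statement, by localizing and flattening $\partial\O$, to a model half-cylinder, and there to construct the trace through the one-dimensional slices of $u$ transverse to the boundary.

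First I would cover $\partial\O$ by finitely many balls $B_i$ in each of which, after a rotation, $\O\cap B_i=\{y_n>h_i(y')\}\cap B_i$ with $y'=(y_1,\dots,y_{n-1})$ and $h_i:\R^{n-1}\to\R$ Lipschitz, and fix a smooth partition of unity $\{\zeta_i\}$ subordinate to $\{B_i\}$ on a neighborhood of $\partial\O$. By the product rule for $BV$ (\cite[Proposition 3.1]{afp}) each $\zeta_iu\in BV(\O)$, so it suffices to find for each $i$ a function $\varphi_i\in L^1(\partial\O)$ that is a trace of $\zeta_iu$ in the stated blow-up sense: since the blow-up limit is linear in $u$ and $\sum_i\zeta_i\equiv1$ near $\partial\O$, the function $\varphi:=\sum_i\varphi_i\in L^1(\partial\O)$ is then a trace of $u$. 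Fixing $i$, the bi-Lipschitz map $\Phi(y',y_n)=(y',y_n-h_i(y'))$, whose Jacobian is identically $1$, straightens the boundary: it carries $\O\cap B_i$ into a half-cylinder $Q\times(0,1)$ with $Q$ a large cube in $\R^{n-1}$, turns $\zeta_iu$ into a $BV$ function $v$ on $Q\times(0,1)$ that vanishes near $\partial Q\times(0,1)$ (so $v$ extends by $0$) but in general not near $Q\times\{0\}$, and — being volume preserving — changes $r^{-n}\int_{B(x,r)\cap\O}|u-c|\,dy$ only through the shape of the ball, hence by comparable radii. Moreover $\Phi$ restricted to $\partial\O\cap B_i=\mathrm{graph}(h_i)\cap B_i$ is, up to the bi-Lipschitz graph parametrization, the identity of $\R^{n-1}$, so $\H^{n-1}$-negligible subsets of $\partial\O\cap B_i$ correspond exactly to $\mathcal{L}^{n-1}$-negligible subsets of $Q\times\{0\}$. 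Thus everything reduces to: given $v\in BV(Q\times(0,1))$, produce $\psi\in L^1(Q)$ with
\[
\frac{1}{r^{n}}\int_{B((x_0',0),r)\cap\{z_n>0\}}|v(z)-\psi(x_0')|\,dz\ \longrightarrow\ 0\qquad\text{as }r\to0^+
\]
for $\mathcal{L}^{n-1}$-a.e.\ $x_0'\in Q$.

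On the half-cylinder I would use the slicing theory of $BV$ functions (\cite[Section 3.11]{afp}): for $\mathcal{L}^{n-1}$-a.e.\ $x'\in Q$ the slice $v^{x'}(t):=v(x',t)$ belongs to $BV(0,1)$, admits a good representative with one-sided limits, $\psi(x'):=\lim_{t\to0^+}v^{x'}(t)$ exists, $|v^{x'}(z_n)-\psi(x')|\le|Dv^{x'}|((0,z_n))$, and $\int_{A'}|Dv^{x'}|(I)\,dx'=|D_nv|(A'\times I)$ for Borel $A'\subset Q$, $I\subset(0,1)$. Integrating the slice bound at a good level $t_0$ gives $\norm{\psi}_{L^1(Q)}\le\norm{v(\cdot,t_0)}_{L^1(Q)}+|D_nv|(Q\times(0,t_0))<\infty$, so $\psi\in L^1(Q)$. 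For the blow-up, write $B'(x_0',r)$ for the $r$-ball in $\R^{n-1}$ and choose $x_0'$ to be simultaneously a Lebesgue point of $\psi$ and a point with $\lim_{r\to0}r^{-(n-1)}|Dv|\bigl(B((x_0',0),2r)\cap\{z_n>0\}\bigr)=0$; the first holds $\mathcal{L}^{n-1}$-a.e.\ since $\psi\in L^1$, and the second holds $\mathcal{L}^{n-1}$-a.e.\ because $|Dv|$ is a \emph{finite} Radon measure on the open set $Q\times(0,1)$, hence assigns zero mass to the $\H^{n-1}$-finite set $Q\times\{0\}$, and a finite measure that is null on an $\H^{n-1}$-finite Borel set has vanishing upper $(n-1)$-density $\H^{n-1}$-a.e.\ on that set (density comparison, see e.g.\ \cite[Section 2.4]{afp}). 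Then, since $B((x_0',0),r)\cap\{z_n>0\}\subset B'(x_0',r)\times(0,r)$ and $|v^{x'}(z_n)-\psi(x_0')|\le|Dv^{x'}|((0,r))+|\psi(x')-\psi(x_0')|$ for $0<z_n<r$, integrating and using the slicing identity yields
\[
\frac{1}{r^{n}}\int_{B((x_0',0),r)\cap\{z_n>0\}}|v-\psi(x_0')|\,dz\ \le\ \frac{|Dv|\bigl(B((x_0',0),2r)\cap\{z_n>0\}\bigr)}{r^{n-1}}+\frac{1}{r^{n-1}}\int_{B'(x_0',r)}|\psi-\psi(x_0')|\,dx',
\]
and both terms tend to $0$ by the choice of $x_0'$. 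This proves the reduced statement, and with it the theorem.

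The first paragraph is routine but fiddly: one must verify that the localization and bi-Lipschitz flattening preserve both the blow-up condition (with only bounded distortion of radii) and the relevant null sets. The conceptual heart is the last step: the blow-up limit vanishes not because $u$ is continuous up to $\partial\O$ — it need not be — but because at $\H^{n-1}$-a.e.\ boundary point the finite measure $|Du|$ has zero $(n-1)$-dimensional density. Establishing this cleanly, via the density comparison lemma applied to $|Du|$ and the $\H^{n-1}$-finite null set $\partial\O$, is the step I would be most careful about.
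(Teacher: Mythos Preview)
The paper does not prove this theorem: it is stated as a well-known result with references to \cite[Theorem 2.10]{Giusti} and \cite[Theorem 10.2.1]{ABM}, and only the uniqueness of the trace is remarked upon (via \cite[Lemma 2.4]{Giusti}). So there is no ``paper's own proof'' to compare against directly.

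That said, your proposal follows exactly the classical route used in those references: localize with a partition of unity, flatten the Lipschitz boundary by the bi-Lipschitz shear $\Phi(y',y_n)=(y',y_n-h_i(y'))$, and on the half-cylinder define the trace via one-dimensional $BV$ slices transverse to the boundary. Your argument is correct. The slicing bound $|v^{x'}(z_n)-\psi(x')|\le |Dv^{x'}|((0,z_n))$ together with the identity $\int_{A'}|Dv^{x'}|(I)\,dx'=|D_nv|(A'\times I)$ is the standard mechanism, and your two-term estimate for the blow-up integral is clean. The point you flag as delicate --- that the finite measure $|Dv|$, extended by zero across $Q\times\{0\}$, has vanishing upper $(n{-}1)$-density $\H^{n-1}$-a.e.\ on $Q\times\{0\}$ --- follows exactly as you indicate from the density comparison theorem (if $\Theta^{*(n-1)}(|Dv|,\cdot)\ge t$ on a set $E\subset Q\times\{0\}$ then $|Dv|\ge t\,\H^{n-1}\llcorner E$, forcing $\H^{n-1}(E)=0$ since $|Dv|(Q\times\{0\})=0$). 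The only minor items left implicit are the measurability of $\psi$ (it is an a.e.\ pointwise limit of the measurable sections $v(\cdot,t)$) and the choice of a level $t_0$ with $v(\cdot,t_0)\in L^1(Q)$ (Fubini); both are routine.
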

From the construction of the trace $\varphi$ (see \cite[Lemma 2.4]{Giusti}, we see that $\varphi$ is uniquely determined. Therefore, we have a well defined operator
\begin{equation*}
\gamma_0:BV(\O)\to L^1(\po\O).
\end{equation*}

We now define the space $BV_0(\O)$ as follows:
\begin{definition}
%\label{defuno}
Let
$$BV_0 (\O)=\ker (\gamma_0).$$
\end{definition}

We also define another $BV$ function space with a zero boundary condition.
\begin{definition}
\label{defdos}
Let
$$\mathbb{BV}_0(\O) : =\overline{C^\infty_c(\O)},$$
where the closure is taken with respect to the intermediate convergence of $BV(\O)$.
\end{definition}

By the intermediate convergence of $BV(\O)$, we mean the following
\begin{definition}
 Let $\{u_k\}\in BV(\O)$ and $u\in BV(\O)$. We say that $u_k$ converges to $u$ in the sense of intermediate (or strict) convergence if
\begin{equation*}
u_k \to u  \tn{ strongly in } L^1(\O) \tn{ and } \int_\O|Du_k|\to \int_\O |Du|.
\end{equation*}
\end{definition}
The following theorem can be found in \cite[Theorem 10.2.2]{ABM}:

\begin{theorem}
\label{continuityoftrace}
The trace operator $\gamma_0$ is continuous from $BV(\O)$ equipped with the intermediate  convergence  onto $L^1(\po\O)$ equipped with the strong convergence. 
\end{theorem}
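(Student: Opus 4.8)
The plan is to prove sequential continuity of $\gamma_0$ for the intermediate convergence; since that convergence is induced by the metric $d(u,v)=\norm{u-v}_{L^1(\O)}+\bigl|\norm{Du}(\O)-\norm{Dv}(\O)\bigr|$ on $BV(\O)$, sequential continuity is equivalent to continuity. (That $\gamma_0$ is \emph{onto} $L^1(\po\O)$ is a separate fact, which I would deduce from Gagliardo's surjectivity theorem for the trace operator on $W^{1,1}(\O)\subset BV(\O)$, whose trace coincides with $\gamma_0$.) So let $u_k\to u$ in the intermediate sense, i.e. $u_k\to u$ in $L^1(\O)$ and $\norm{Du_k}(\O)\to\norm{Du}(\O)$, and aim to show $\gamma_0 u_k\to \gamma_0 u$ in $L^1(\po\O)$. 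By the standard subsequence principle it is enough to extract from an arbitrary subsequence a further subsequence along which this holds, so I may pass to subsequences freely. Since $\gamma_0$ is local and $\po\O$ is compact, I would cover $\po\O$ by finitely many open sets $U_j$ on which, after rotating coordinates, $\O\cap U_j$ is the subgraph $\{x_n>h_j(x')\}$ of a Lipschitz function $h_j$; the bi-Lipschitz straightening $\Psi_j(x',x_n)=(x',x_n-h_j(x'))$ identifies $\O\cap U_j$ with a half-cylinder $C_j=Q_j'\times(0,a_j)$ and $\po\O\cap U_j$ with $Q_j'\times\{0\}$, with the surface measure $\H^{n-1}$ on $\po\O\cap U_j$ comparable to Lebesgue measure on $Q_j'$. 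Under this identification, $\gamma_0 u$ on $\po\O\cap U_j$ corresponds to the one-sided trace $Tv_j$ of $v_j:=u\circ\Psi_j^{-1}\in BV(C_j)$, where $Tv_j(y'):=\lim_{t\to 0^+}v_j(y',t)$ (the limit existing for a.e.\ $y'$ and in $L^1_{\loc}(Q_j')$). Shrinking each $Q_j'$ to some $Q_j''\Subset Q_j'$, still with $\{\Psi_j^{-1}(Q_j''\times\{0\})\}_j$ covering $\po\O$ and with the lateral tube $\Psi_j^{-1}(\po Q_j''\times(0,a_j))$ being $\norm{Du}$-null (both achievable for a.e.\ choice), it suffices to control $\norm{Tv_{j,k}-Tv_j}_{L^1(Q_j'')}$, where $v_{j,k}:=u_k\circ\Psi_j^{-1}$.

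The analytic core is the one-dimensional slicing estimate: for $v\in BV(C_j)$ and $0<s<a_j$,
\begin{equation*}
\norm{v(\cdot,s)-Tv}_{L^1(Q_j'')}\;\le\;\norm{Dv}\bigl(Q_j''\times(0,s)\bigr),
\end{equation*}
which follows by writing $v(y',s)-Tv(y')$ as the integral over $(0,s)$ of the one-dimensional derivative of $t\mapsto v(y',t)$ and integrating in $y'$; in particular $v(\cdot,s)\to Tv$ in $L^1(Q_j'')$ as $s\to 0^+$. Put $S_s^j:=\Psi_j^{-1}(Q_j''\times(0,s))\subset\O$, and note that the bi-Lipschitz change of variables gives $\norm{D(w\circ\Psi_j^{-1})}(E)\le C_j\,\norm{Dw}(\Psi_j^{-1}(E))$ for Borel $E\subset C_j$, with $C_j$ depending only on $\Lip(h_j)$. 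Applying the slice estimate to $v_{j,k}$ and to $v_j$ and inserting the slices $v_{j,k}(\cdot,s)$, $v_j(\cdot,s)$ via the triangle inequality yields, for every $k$ and every admissible $s$,
\begin{equation*}
\norm{\gamma_0 u_k-\gamma_0 u}_{L^1(\po\O\cap U_j)}\le C_j\Bigl[\norm{Du_k}(S_s^j)+\norm{Du}(S_s^j)+\norm{[(u_k-u)\circ\Psi_j^{-1}](\cdot,s)}_{L^1(Q_j'')}\Bigr].
\end{equation*}

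It remains to make each of the three terms small. For the last term: $u_k\to u$ in $L^1(\O)$ forces $(u_k-u)\circ\Psi_j^{-1}\to 0$ in $L^1(C_j)$, so $\int_0^{a_j}\norm{[(u_k-u)\circ\Psi_j^{-1}](\cdot,s)}_{L^1(Q_j'')}\,ds\to 0$; after passing to a further subsequence (simultaneously for the finitely many $j$) these slice norms therefore tend to $0$ for a.e.\ $s$. The first term is where intermediate convergence enters, and only through the total variations: from
$$\norm{Du_k}(\O)=\norm{Du_k}(S_s^j)+\norm{Du_k}(\po S_s^j\cap\O)+\norm{Du_k}(\O\setminus\overline{S_s^j}),$$
dropping the nonnegative middle piece, taking $\limsup_k$, and combining $\norm{Du_k}(\O)\to\norm{Du}(\O)$ with the lower semicontinuity of the total variation on the open set $\O\setminus\overline{S_s^j}$, one obtains $\limsup_k\norm{Du_k}(S_s^j)\le\norm{Du}(\overline{S_s^j}\cap\O)$, which equals $\norm{Du}(S_s^j)$ for every $s$ outside the countable set for which the top face $\Psi_j^{-1}(Q_j''\times\{s\})$ is $\norm{Du}$-charged. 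Now, given $\varepsilon>0$, choose one $s>0$ valid for all $j$ — lying in the a.e.\ set for the slice convergence, having $\norm{Du}$-null slab boundaries, and small enough that $\sum_j\norm{Du}(S_s^j)<\varepsilon$ (possible since $\bigcap_{s>0}S_s^j=\emptyset$). Summing the displayed inequality over the finitely many $j$ and taking $\limsup_k$ gives $\limsup_k\norm{\gamma_0 u_k-\gamma_0 u}_{L^1(\po\O)}\le 2(\max_j C_j)\varepsilon$, and letting $\varepsilon\to 0$ finishes the proof along the extracted subsequence, hence for the full sequence.

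I expect the main obstacle to be the localization bookkeeping rather than any single hard estimate: carefully transporting $\gamma_0$ and the total-variation bounds through the finitely many bi-Lipschitz charts, and arranging the cutoff cylinders $Q_j''$ and the one good radius $s$ so that all the null-boundary and a.e.-convergence requirements hold simultaneously across the cover. The two genuinely analytic ingredients — the one-dimensional slice estimate and the lower-semicontinuity inequality bounding $\limsup_k\norm{Du_k}(S_s^j)$ — are both short.
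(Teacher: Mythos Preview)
The paper does not prove this theorem; it merely quotes it from \cite[Theorem 10.2.2]{ABM}. So there is no ``paper's own proof'' to compare against, and your task was really to supply a proof from scratch. Your argument is correct and is in fact the standard one; it is worth noting that its two analytic ingredients are precisely tools the paper itself uses elsewhere. The slicing estimate $\norm{v(\cdot,s)-Tv}_{L^1(Q_j'')}\le\norm{Dv}(Q_j''\times(0,s))$ is exactly inequality \eqref{mostimportant} in the proof of Theorem~\ref{muyimportante} (there derived for smooth approximants and passed to the limit), and the bound $\limsup_k\norm{Du_k}(S_s^j)\le\norm{Du}(\overline{S_s^j}\cap\O)$ is Lemma~\ref{muyimportante-1}. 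The remaining work is, as you say, bookkeeping: bi-Lipschitz transport of traces and variations through finitely many charts, and the selection of a single parameter $s$ meeting a countable list of null-set constraints. One small cosmetic point: in your displayed inequality the left-hand side should really carry the smaller patch $\Psi_j^{-1}(Q_j''\times\{0\})$ rather than all of $\po\O\cap U_j$, but since those smaller patches still cover $\po\O$ the summation step goes through unchanged.
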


The following theorem is well known and can be found in many standard references including \cite{ABM, Evans, Zie89, Giusti}, but for completeness we will include the proof here.

\begin{theorem}
\label{mainapproximation}
The space $C^\infty(\O)\cap BV(\O)$ is dense in $
BV(\O)$ equipped with the intermediate convergence. Moreover, if $\O$ is a Lipschitz domain then $C^\infty(\overline{\O})$ is also dense in $BV(\O)$ for the intermediate convergence. 
%\textcolor{red}{I think we need to use
%that $C^\infty(\overline{\O})$ is dense in $W^{1,1}(\O)$ (see below proof) . This requires the domain to be $C^1$, but looking at the proof in Evans book I %still don't see why Lipschitz  would not work}.
\end{theorem}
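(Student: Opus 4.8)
The statement to prove is the classical mollification-type density theorem: $C^\infty(\O)\cap BV(\O)$ is dense in $BV(\O)$ for the intermediate convergence, and when $\partial\O$ is Lipschitz one may even approximate by $C^\infty(\overline\O)$ functions. Since intermediate convergence means $u_k\to u$ in $L^1(\O)$ together with $\int_\O|Du_k|\to\int_\O|Du|$, and since lower semicontinuity of the total variation is automatic, the whole content is to build smooth $u_k$ with $u_k\to u$ in $L^1$ and $\limsup_k\int_\O|Du_k|\le\int_\O|Du|$. The first assertion is local and needs no regularity of $\partial\O$; the second requires a boundary straightening argument using the Lipschitz charts introduced in the Definition of Lipschitz boundary above.

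\textbf{Step 1: a partition of unity adapted to the boundary.} Fix $\ve>0$. Choose open sets $\O_0\Subset\O_1\Subset\cdots$ exhausting $\O$; concretely set $A_0=\{x\in\O:\dist(x,\partial\O)>1/m_0\}$ and, for $j\ge1$, thin annular shells $A_j$ near $\partial\O$, so that $\O=\bigcup_{j\ge0}A_j$ and each point of $\O$ lies in at most two of the $A_j$. Take a subordinate partition of unity $\{\zeta_j\}\subset C_c^\infty(A_j)$, $\sum_j\zeta_j\equiv1$ on $\O$. By absolute continuity of the measure $|Du|$ we may first pick the shells so that $\sum_{j\ge1}\int_{A_j}|Du|<\ve$ (the "bad" mass near $\partial\O$ is small).

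\textbf{Step 2: mollify each piece with a fast radius.} For each $j$ the function $u\zeta_j$ has compact support in $A_j$, so for a small enough $\eta_j>0$ the mollification $(u\zeta_j)*\rho_{\eta_j}$ is supported in $A_j$ and, by the Leibniz/product rule $D(u\zeta_j)=\zeta_j Du+u\,D\zeta_j$, one gets
\[
\int_\O\bigl|D\bigl((u\zeta_j)*\rho_{\eta_j}\bigr)-\zeta_j*\rho_{\eta_j}\cdot Du\bigr|<2^{-j}\ve,\qquad
\|(u\zeta_j)*\rho_{\eta_j}-u\zeta_j\|_{L^1(\O)}<2^{-j}\ve,
\]
where the second term in the Leibniz rule is absorbed into the error because $\sum_j D\zeta_j\equiv0$ so the contributions $\sum_j u*\rho_{\eta_j}\cdot D\zeta_j$ telescope up to a term controlled by the oscillation of $u$ over shrinking balls, which tends to $0$ as $\eta_j\to0$ by $L^1$-continuity of translations. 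Set $u_\ve:=\sum_j(u\zeta_j)*\rho_{\eta_j}$; this is a locally finite sum of smooth functions, hence $u_\ve\in C^\infty(\O)$, and $u_\ve\to u$ in $L^1(\O)$ as $\ve\to0$. Summing the gradient estimates and using $\sum_j\zeta_j*\rho_{\eta_j}\le1$ together with Step 1 gives
\[
\int_\O|Du_\ve|\le\int_\O|Du|+C\ve,
\]
so, with lower semicontinuity, $\int_\O|Du_\ve|\to\int_\O|Du|$; combined with $L^1$ convergence this is exactly intermediate convergence, proving the first assertion.

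\textbf{Step 3: the Lipschitz case — gaining regularity up to the boundary.} This is the part I expect to be the main obstacle. Cover $\partial\O$ by finitely many balls $B(x_i,r_i)$ on each of which, after rotation, $\O\cap B(x_i,r_i)$ lies above the graph of a Lipschitz function $h_i$. On such a chart translate the set slightly inward along the $y_n$-direction: for $t>0$ put $u^{(t)}(y',y_n):=u(y',y_n+t)$, which is defined on a neighbourhood of $\overline{\O\cap B(x_i,r_i)}$ because the graph is Lipschitz (the inward translate of $\overline\O$ near the chart is contained in $\O$); then mollify $u^{(t)}$ at scale $\eta\ll t$ so the result is smooth on $\overline{\O\cap B(x_i,r_i)}$. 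One checks $u^{(t)}\to u$ in $L^1$ and $\int|Du^{(t)}|\to\int|Du|$ as $t\to0$ (translation continuity plus the fact that the translated measures converge weakly-$*$ with no mass escaping, since $\partial\O$ has Lebesgue measure zero and $|Du|(\partial\O)=0$ as $Du$ is a measure on the open set $\O$). Patching the boundary pieces (via the partition of unity of Step 1, now with the shell pieces handled by inward translation) with the interior mollification of Step 2 yields $u_\ve\in C^\infty(\overline\O)$ with $u_\ve\to u$ in the intermediate sense. The delicate points are: (a) verifying the inward translate of $\overline{\O\cap B(x_i,r_i)}$ genuinely sits inside $\O$, which is where the Lipschitz bound on $h_i$ enters quantitatively (choose $t$ larger than $\mathrm{Lip}(h_i)$ times the mollification radius, suitably scaled); and (b) controlling the total variation created at the overlaps of charts, which is handled by again arranging at the outset that $|Du|$ of the union of chart-overlap regions is $<\ve$.

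Throughout, the only external inputs are the product rule for $BV$ functions, standard properties of mollifiers, and lower semicontinuity of the variation — all already available — so no new machinery is needed; the care is entirely in the bookkeeping of Step 3.
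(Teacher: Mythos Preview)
Your Steps 1--2 are exactly the paper's argument for the first assertion: an exhausting sequence $\O_i$, a partition of unity subordinate to the annular cover, and mollification of each piece $u\zeta_j$ with a small radius $\eta_j$, using the cancellation $\sum_j D\zeta_j\equiv 0$ to kill the cross terms. (Two cosmetic slips: the displayed estimate should read $\|(uD\zeta_j)*\rho_{\eta_j}-uD\zeta_j\|_{L^1}<2^{-j}\ve$, and the bound you want is $\int|(\zeta_j Du)*\rho_{\eta_j}|\le\int\zeta_j|Du|$ together with $\sum_j\zeta_j=1$, not the pointwise claim $\sum_j\zeta_j*\rho_{\eta_j}\le1$, which need not hold when the $\eta_j$ differ.)

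Where you diverge is Step 3. The paper does \emph{not} redo the boundary translation argument; instead it observes that $C^\infty(\O)\cap BV(\O)=C^\infty(\O)\cap W^{1,1}(\O)$, cites the classical fact that $C^\infty(\overline\O)$ is dense in $W^{1,1}(\O)$ for the strong norm on Lipschitz domains, and then chains the two approximations: given $u\in BV(\O)$, first approximate in the intermediate sense by $v\in C^\infty(\O)\cap W^{1,1}(\O)$ (Step 1--2), then approximate $v$ strongly in $W^{1,1}$ by $w\in C^\infty(\overline\O)$. Your direct route---inward translation in each Lipschitz chart followed by mollification, patched together---is of course what underlies the cited $W^{1,1}$ result, so it is a valid self-contained proof; it just costs you the bookkeeping of part (b) that the paper avoids by quoting the literature. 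The paper's shortcut is cleaner; yours is more self-contained.
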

\begin{proof}
We note that $C^{\infty}(\O)\cap BV(\O)=C^{\infty}(\O)\cap W^{1,1}(\O)$. For Lipschitz domains, it is proved, e.g., in \cite[page 127]{Evans} that $C^{\infty}(\overline{\O})$ is dense in $W^{1,1}(\O)$, equipped with the strong convergence. This actually holds even for  domains that possess the so-called {\it segment property} (see \cite[Theorem 3.18]{AR}).
Thus, since the strong convergence implies the intermediate convergence it follows that $C^{\infty}(\overline{\O})$ is dense in  $C^{\infty}(\O)\cap BV(\O)$ in the intermediate convergence. Therefore, if $C^{\infty}(\O)\cap BV(\O)$ is dense in $BV(\O)$ for the intermediate convergence, the second statement of the theorem holds.
Let $\varepsilon>0$ and $u\in BV(\O)$. We decompose $\Omega$ as follows: 
\begin{equation*}
\O=\bigcup^\infty_{i=0}\O_i, \int_{\O\setminus\O_0}|Du|<\varepsilon \tn{ and } \O_i\subset\subset\O_{i+1}.
\end{equation*}
We consider the open cover $\{C_i\}$ defined as follows:
\begin{eqnarray*}
C_1&:=&\O_2\\
C_i &:=&\O_{i+1}\setminus {\overline{\O}}_{i-1},\quad  i\ge 2.
\end{eqnarray*}
Let $\{\varphi_i\}$ be a partition of unity associated to $\{C_i\}$; that is,
\[\varphi_i\in C^\infty_c(C_i),\quad 0\le\varphi_i\le 1,\quad \sum^\infty_{i=1}\varphi_i=1.\]
Note that $\varphi_1\equiv 1$ on $\O_1$. Let $\rho$ be a standard mollifier as in the proof of Theorem \ref{pmcondicion}. For each $i$, choose $\varepsilon_i>0$ so that:
\begin{eqnarray}
&&\tn{ spt } (\rho_{\varepsilon_i} * \varphi_i u)\subset C_i, \nonumber\\
&&\int_\O |\rho_{\var_i} * (u\varphi_i)-u\varphi_i|<{\var\over 2^i}\label{letraA},
\\&&\int_\O |\rho_{\var_i} * (u D\varphi_i)-u D\varphi_i|<{\var\over 2^i}\label{letraB},\\
&&\bigg|\int_\O |\rho_{\var_1} * (\varphi_1Du)| dx-\int_\O|\varphi_1 Du|\bigg|<\var.
\label{letraC}
\end{eqnarray}
Define \[ u_\var : =\sum^\infty_{i=1}\, \rho_{\var_i} * (u\varphi_i).\]
Then
\[\int_\O|u-u_\var|\, dx\le\sum^\infty_{i=1}\int_\O\, |\rho_{\var_i} *(u\varphi_i)-u\varphi_i|\, dx <\var, \quad \rm{by\ \eqref{letraA}}.\]
We have
\begin{eqnarray*}
Du_\var&=& \sum^\infty_{i=1}\, \rho_{\var_i} * (\varphi_i Du)+\sum^\infty_{i=1}\, \rho_{\var_i} * (u D \varphi_i)\\
&=& \sum^\infty_{i=1}\, \rho_{\var_i} * (\varphi_i Du)+\sum^\infty_{i=1}\, \big(\rho_{\var_i} * (u D \varphi_i)-uD\varphi_i\big).
\end{eqnarray*}
Then, on the one hand,
\begin{eqnarray}
\bigg|\int_\O|\rho_{\var_1} * (\varphi_1 Du)|\, dx -\int_\O|Du_\var|\bigg| &\le& 
\sum^\infty_{i=2} \int_\O|\rho_{\var_i} * (\varphi_i Du)|\, dx+\sum^\infty_{i=1}\int_\O|\rho_ {\var_i} * (u D\varphi_i)-u D\varphi_i|\, dx \nonumber\\
 &\le& \sum^\infty_{i=2} \int_\O|\rho_{\var_i} * (\varphi_i Du)| +\var, \quad\rm{by}\ \eqref{letraB} \nonumber\\
&\le& \sum^\infty_{i=2} \int_\O |\varphi_i Du|+\var, \tn{ by a property of convolution} \nonumber\\
&\le& \int_{\O\setminus\O_0}|Du|+\var \nonumber\\
&\le&  \var+\var=2\var \label{one}.
\end{eqnarray}

On the other hand,
\begin{eqnarray}
\bigg|\int_\O|\rho_{\var_1} * (\varphi_1 Du)|\, dx -\int_\O|Du|\bigg| \nonumber
&=& \bigg|\int_\O|\rho_{\var_1} * (\varphi_1 Du)|\, dx -\int_\O |\varphi_1 Du|-\int_\O(1-\varphi_1)|Du|\bigg| \nonumber\\
&\le & \var +\int_\O(1-\varphi_1)|Du|, \tn{ by } \eqref{letraC} \nonumber\\
&\le & \var +\int_{\O\setminus\O_0}|Du|\le 2\var, \tn{ since } \varphi_1 \equiv 1 \tn{ on } \O_1 \label{two}.
\end{eqnarray}

From \eqref{one} and \eqref{two}:
$$\bigg|\int_\O|Du_\var|-\int_\O|Du|\bigg|<4\var.$$
\end{proof}

\begin{theorem}
\label{muyimportante}
Let $\Omega$ be any bounded open set with Lipschitz boundary. Then $BV_c(\Omega)$ is dense in $BV_0(\Omega)$ in the strong topology of $BV(\Omega)$.
\end{theorem}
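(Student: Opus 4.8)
The plan is to prove the density by a localization-plus-cut-off argument: localize near $\partial\O$ with a partition of unity, then in each boundary chart multiply $u$ by Lipschitz cut-offs $\eta_k$ that vanish in a neighbourhood of $\partial\O$ shrinking like $1/k$. The only serious point is estimating the term $\int_\O |u|\,|D\eta_k|\,dx$ coming from differentiating the cut-off; this is precisely where the zero-trace hypothesis must be used.

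First I would cover the compact set $\partial\O$ by finitely many open sets $U_1,\dots,U_N$ in each of which, after rotation, $\O\cap U_j=\{(y',y_n)\,:\,y_n>h_j(y')\}\cap U_j$ for an $L$-Lipschitz $h_j$, add one more open set $U_0\Subset\O$, and take a smooth partition of unity $\psi_0,\dots,\psi_N$ subordinate to $\{U_j\}$ with $\sum_{j}\psi_j\equiv1$ on $\overline\O$. Then $u=\sum_{j=0}^N\psi_j u$ in $BV(\O)$; the term $\psi_0 u$ already belongs to $BV_c(\O)$, while for $j\ge1$ the function $v:=\psi_j u$ lies in $BV(\O)$, has $\mathrm{supp}\,v$ compactly contained in $U_j$, and satisfies $\gamma_0 v=\psi_j\,\gamma_0 u=0$ by the product rule for traces. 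Since a finite sum of strong approximations is a strong approximation, it suffices to approximate, strongly in $BV(\O)$, one such $v$, and we may work in the single chart $U=U_j$ with $\delta(y):=y_n-h(y')$, a nonnegative Lipschitz function comparable to $\mathrm{dist}(\cdot,\partial\O)$ on $U$ and vanishing exactly on $\partial\O\cap U$.

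Fix $\zeta\in C_c^\infty(U)$ with $\zeta\equiv1$ on a neighbourhood of $\overline{\mathrm{supp}\,v}$, let $\theta_k\colon\mathbb R\to[0,1]$ be piecewise linear with $\theta_k\equiv0$ on $(-\infty,1/k]$, $\theta_k\equiv1$ on $[2/k,\infty)$, $|\theta_k'|\le k$, and set $\eta_k:=\theta_k(\delta)\,\zeta$. Then $\eta_k$ is Lipschitz, $0\le\eta_k\le1$, $|D\eta_k|\le Ck$ with $D\eta_k$ supported (near $\mathrm{supp}\,v$) in the band $\{1/k\le\delta\le2/k\}$, and $v\eta_k\in BV_c(\O)$. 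By the product rule $D(v\eta_k)=\eta_k Dv+v\,\nabla\eta_k\,\mathcal L^n$, so
\[
\norm{D(v\eta_k)-Dv}(\O)\le \int_\O|\eta_k-1|\,d\norm{Dv}+\int_\O|v|\,|D\eta_k|\,dx .
\]
The first term is at most $\norm{Dv}(\{\delta<2/k\}\cap\O)$, which tends to $0$ because $\norm{Dv}$ is a finite measure on $\O$ and $\{\delta<2/k\}\cap\O\downarrow\emptyset$; similarly $\norm{v\eta_k-v}_{L^1(\O)}\le\int_{\{\delta<2/k\}}|v|\,dx\to0$.

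The \textbf{main obstacle} is the remaining term, bounded by $Ck\int_{\{\delta<2/k\}}|v|\,dx$, so I must show $k\int_{\{\delta<t_k\}}|v|\,dx\to0$ for $t_k=2/k\to0$. Writing the integral in the chart coordinates and substituting $y_n=h(y')+s$ gives
\[
\frac1t\int_{\{\delta<t\}}|v|\,dy=\int_{V'}\Big(\frac1t\int_0^t |v(y',h(y')+s)|\,ds\Big)dy',
\]
where $V'$ is the bounded projection of $U$ onto $\R^{n-1}$. Now $\gamma_0 v=0$ means exactly that the slices $v(\cdot,h(\cdot)+s)$ converge to $0$ in $L^1(V';dy')$ as $s\to0^+$ (this is the content of the trace construction in the cited references \cite{Giusti,ABM}); hence $|v(\cdot,h(\cdot)+s)|\to0$ in $L^1(V')$, and therefore so do the averages $t^{-1}\int_0^t|v(\cdot,h(\cdot)+s)|\,ds$. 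Since these averages are nonnegative, the displayed identity yields $t^{-1}\int_{\{\delta<t\}}|v|\,dy\to0$, which is what was needed. Combining the three estimates gives $v\eta_k\to v$ strongly in $BV(\O)$, and summing the approximations of $\psi_0u,\dots,\psi_Nu$ completes the proof.
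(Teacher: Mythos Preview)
Your proof is correct and follows essentially the same route as the paper: localize by a partition of unity, cut off by a function of the ``height'' $\delta=y_n-h(y')$ above the Lipschitz graph, and control the cross term $k\int_{\{\delta<2/k\}}|v|$ via the $L^1$-convergence of slices implied by zero trace. The only cosmetic difference is that the paper first flattens to a cylinder by the bi-Lipschitz map $(y',y_n)\mapsto(y',y_n-h(y'))$ and re-derives the slice estimate \eqref{mostimportant} from the smooth intermediate approximation and Lemma~\ref{muyimportante-1}, whereas you invoke that estimate as a known consequence of the trace construction in \cite{Giusti}; since the paper itself cites \cite{Giusti} for the trace, this is a legitimate shortcut.
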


\begin{proof}
We consider first the case $u\in BV_0(C_{R,T})$, where $C_{R,T}$ is the
open cylinder 
\begin{equation*}
%\label{cilindro}
C_{R,T}= \mathcal{B}_{R} \times (0,T),
\end{equation*}
$\mathcal{B}_{R}$ is an open ball of radius $R$ in $\R^{n-1}$, and $\tn{supp}(u) \cap \partial C_{R,T}=\tn{supp}(u) \cap (\mathcal{B}_R\times\{0\})$. A generic point in $C_{R,T}$ will be denoted by $(x',t)$, with $x' \in \mathcal{B}_R$ and $t\in (0,T)$.
% Using partitions of unity and flattening out $\partial \O$ we can assume that $u\in BV(\R^n_+)$, $u$ has compact support in $\%overline{\R}^n_+$ and $\gamma_0(u)=0$ on $\partial \R^n_+= \R^{n-1}$.
 From Theorem \ref{mainapproximation}, we can approximate $u$ with a sequence of functions
$u_k\in C^\infty(\overline{C_{R,T}})$  
%\textcolor{red}{Here we use the second part of Theorem \ref{mainapproximation}, which we need because below we use the fundamental theorem of calculus for $u_k$ %up to the boundary $t=0$} 
such that 
\begin{eqnarray} \label{three}
u_k \to u \tn{ in }\ L^1(C_{R,T}) \tn{ and } \int_{C_{R,T}} |Du_k|\to \int_{C_{R,T}} |Du|.
\end{eqnarray}
Notice that the condition $\tn{supp}(u) \cap \partial C_{R,T}=\tn{supp}(u) \cap (\mathcal{B}_R\times\{0\})$ implies that 
$$\gamma_0(u_k)\rtangle (\partial C_{R,T}\setminus (\mathcal{B}_R\times\{0\})) \equiv 0.$$ From Theorem \ref{continuityoftrace}, $\gamma_0(u_k)\to \gamma_0(u) \textnormal{ in } L^1(\partial C_{R,T})$ and hence
\begin{equation}\label{four}
 \gamma_0(u_k) \rtangle (\mathcal{B}_R \times\{0\})=u_k \big|_{(\mathcal{B}_R \times\{0\})}\to 0 \tn{ in } L^1(\mathcal{B}_R \times\{0\}).
\end{equation}
For $x'\in \mathcal{B}_R$, $0 \leq x_n \leq T$, we have
\begin{equation*}
u_k(x',x_n)-u_k(x',0)=\int^{x_n}_0{\partial u_k\over \partial x_n}\, (x', t) dt,
\end{equation*}
and hence,
\begin{equation}
\label{help}
|u_k(x',x_n)|\le |u_k(x',0)|+\int^{x_n}_0 \left| {\partial u_k\over \partial x_n}\, (x', t)\right| dt.
\end{equation}
We integrate both sides in \eqref{help} to obtain:
\begin{equation}
\label{importante}
\int_{\mathcal{B}_R}|u_k(x', x_n)| dx'\le \int_{\mathcal{B}_R}|u_k(x', 0)| dx'+
\int^{x_n}_0\int_{\mathcal{B}_R}|Du_k(x', t)| dx'\, dt.
\end{equation}
From \eqref{four} we have
\begin{equation}
\label{ya}
 \lim_{k \to \infty} \int_{\mathcal{B}_R}|u_k(x',0)| dx' = 0,
\end{equation}
and thus, letting $k\to\infty$ in \eqref{importante} and using \eqref{ya}, \eqref{three} and Lemma \ref{muyimportante-1} (in particular, \eqref{clave} with $A:= \mathcal{B}_R \times (0,x_n)$ for a.e. $0< x_n < T$) we obtain
\begin{equation}
\label{mostimportant}
\int_{\mathcal{B}_R}|u(x', x_n)| dx'\le \int^{x_n}_0\int_{\mathcal{B}_R}|Du|  = \norm{Du}(\mathcal{B}_R \times(0,x_n))  \tn{  for a.e.  } 0< x_n < T.
\end{equation}
Consider a  function $\varphi\in C_c^\infty(\R)$ such that $\varphi$ is decreasing in $[0, +\infty)$ and satisfies 
\begin{equation*}
\varphi\equiv 1 \tn{ on } [0,1],\varphi\equiv 0 \tn{ on } \R\setminus [-1,2], 0\le\varphi\le 1.
\end{equation*}
We define
\begin{eqnarray}
&\varphi_k(t)=\varphi(k t)\nonumber, \quad k=1,2, \dots\\
 &v_k(x',t) = (1-\varphi_k(t))u(x',t)\label{six}.
\end{eqnarray}
Clearly, $v_k\to u$ in $L^1(C_{R,T})$. Also, if $u\geq 0$ then $v_k \uparrow u$ since $\varphi$ is decreasing in $[0, +\infty)$.  Moreover, 
\begin{eqnarray*}
{\partial v_k\over \partial t}&=&(1-\varphi_k){\partial u\over\partial t}-k\varphi'(k t)u,\\
D_{x'}v_k&=&(1-\varphi_k)D_{x'}u.
\end{eqnarray*}
Thus we have 
\begin{eqnarray*}
\int_{C_{R,T}}|D v_k-Du|&=&\int_{C_{R,T}}\bigg|\Big(D_{x'}u-\varphi_k D_{x'}u,{\partial u\over\partial t}-\varphi_k{\partial u\over\partial t}-k\varphi'(k t)u \Big)- \Big(D_{x'}u, {\partial u\over\partial t}\Big)\bigg|\\
&=&\int_{C_{R,T}}\bigg| \Big(-\varphi_k D_{x'}u, -\varphi_k{\partial u\over\partial t}-k\varphi'(k t)u \Big )\bigg|.
\end{eqnarray*}
Since $\varphi_k(t) =0$ for $ t >{2\over k}$ we have the following:
\begin{eqnarray}
\int_{C_{R,T}}|D v_k-Du|&\le& C\bigg(\int_{C_{R,T}}\varphi_k|Du|+\int_{C_{R,T}}k|\varphi'(kx_n)||u|\bigg)\nonumber\\
&\le& C\int_0^{2/k}\int_{\mathcal{B}_R}|Du|+C\, k\int_0^{2/k}\int_{\mathcal{B}_R}|u(x',t)| dx'dt\nonumber\\
&\le& C\int_0^{2/k}\int_{\mathcal{B}_R}|Du|+C\, k\int_0^{2/k}\norm{Du}(\mathcal{B}_R \times(0,t)) dt, \tn{ by } \eqref{mostimportant}\nonumber\\
&\leq&C\int_0^{2/k}\int_{\mathcal{B}_R}|Du|+C\, k \cdot \norm{Du}(\mathcal{B}_R \times(0,2/k))\cdot \int^{2/k}_0 dt\nonumber\\
&=&C\int_0^{2/k}\int_{\mathcal{B}_R}|Du|+C\, k\cdot\frac 2k\cdot  \int_0^{2/k}\int_{\mathcal{B}_R}|Du|\nonumber\\
&=&C\int_0^{2/k}\int_{\mathcal{B}_R}|Du|.\label{oneone}
\end{eqnarray}
Since $\norm{Du}$ is a Radon measure and $\cap_{k=1}^{\infty}(\mathcal{B}_R\times (0,\frac{2}{k}))= \emptyset$ we find
\begin{equation*}
\int_0^{2/k}\int_{\mathcal{B}_R}|Du| \to 0, \tn{ as } k \to \infty,
\end{equation*}
which by \eqref{oneone} yields
\begin{equation*}
%\label{cruz}
\lim_{k\to\infty}\int_{C_{R,T}}|Dv_k-Du|=0.
\end{equation*}
Thus
\begin{equation}
\label{densidadenelcilindro}
v_k\to u \tn{ in the strong topology of } BV(C_{R,T}).
\end{equation}

%Since $w_k=0$ if $0<t <\frac 1k$ we can mollify $w_k$ to produce a sequence
%\begin{equation*}
% u_k\in C^\infty_c(C_{R,T})
%\end{equation*}
%such that
%\begin{equation*}
%u_k\to u \tn{ in the intermediate convergence in } C_{R,T},
% \end{equation*}
%and this implies that
%\begin{equation*}
%u\in\mathbb{BV}_0(C_{R,T}).
%\end{equation*}

We consider now the general case of a  bounded  open set $\O$ with Lipschitz boundary and let $u \in BV_0(\Omega)$. For each point $x_0 \in \partial \Omega$, there exists a neighborhood $A$ and a bi-Lipschitz function $g:B(0,1)\to A$ that maps $B(0,1)^{+}$ onto $A\cap \Omega$ and the flat part of $\partial B(0,1)^{+}$ onto $A \cap \partial\O$. A finite number of such sets $A_1, A_2, \dots, A_n$ cover $\partial\Omega$. By adding possibly an additional open set $A_0 \subset \subset \Omega$, we get a finite covering of $\overline{\Omega}$. Let $\{\alpha_i\}$ be a partition of unity relative to that covering, and let $g_i$ be the bi-Lipschitz map relative to the set $A_i$ for $i=1, 2, \dots,N$. For each $i \in \{1,2, \dots,N\}$ the function 
$$U_i=(\alpha_i u) \circ g_i$$
belongs to $BV_0(B(0,1)^{+})$, and has support non-intersecting the curved part of $\partial B(0,1)^{+}$. Thus, we can extend $U_i$ to the whole cylinder $C_{1,1}:=\mathcal{B}_{1}(0) \times (0,1)$ by setting $U_i$ equal to zero outside $B(0,1)^{+}$. By \eqref{densidadenelcilindro}, for each $\varepsilon >0$, we can find a function $W_i \in BV_c(C_{1,1})$ such that 
\begin{equation}
\label{callie}
\norm{W_i-U_i}_{BV(C_{1,1})} \leq \varepsilon,
\end{equation}
for $i=1, 2, \dots,N$. Letting now
\begin{equation*}
 w_i=W_i \circ g_i^{-1}, \,\, i=1,2, \dots,N, 
\end{equation*} 
we have $w_i \in BV_c(A_i \cap \Omega)$ and 
\begin{eqnarray}
 \norm{D(w_i-\alpha_i u)}(A_i \cap \O)&=& \norm{ D(W_i \circ g_i^{-1} -((\alpha_i u) \circ g_i) \circ g_i^{-1}) } (A_i \cap \O) \nonumber\\
 &=&\norm{D(g_{i\#}(W_i-(\alpha_i u)\circ g_i))} (A_i \cap \Omega)\nonumber\\
 &\leq& C\,  g_{i\#} \norm{D(W_i-(\alpha_i u)\circ g_i)} (A_i \cap \Omega), \textnormal{ by  \cite[Theorem 3.16]{afp}} \nonumber\\
 &=&C \int_{g_i^{-1} (A_i \cap \Omega)} |D(W_i-U_i)|, \textnormal{by definition of $g_{i\#}$ acting on measures}\nonumber\\
 &=& C \int_{B(0,1)^{+}} |D(W_i-U_i)|\nonumber\\
 &\leq& C\, \varepsilon, \textnormal{ by } \eqref{callie}.
\label{DWiai}
\end{eqnarray}
Here $C=\max_{i}\{[\Lip(g_i)]^{n-1}\}$ (see \cite[Theorem 3.16]{afp}). Let $w_0=\alpha_0u$. Then $w_0 \in BV_c(\O)$. Define
\begin{equation*}
w= \sum_{i=0}^{N}w_i.
\end{equation*}
We have $w \in BV_c(\Omega)$, and by \eqref{DWiai}
\begin{eqnarray*}
\norm{D(w-u)}(\Omega) &\leq& \sum_{i=0}^N \norm{D(w_i-\alpha_i u)}(A_i\cap \Omega)\nonumber\\
&=& \sum_{i=1}^N \norm{D(w_i-\alpha_i u)}(A_i\cap \Omega) \nonumber\\
&\leq& N C\, \varepsilon.
\end{eqnarray*}
Likewise, by \eqref{callie} and a change of variables we have 
\begin{equation*}
\norm{w-u}_{L^1(\Omega)} \leq \sum_{i=0}^N \norm{w_i-\alpha_i u}_{L^1(A_i\cap \Omega)}\leq  \sum_{i=1}^N \norm{w_i-\alpha_i u}_{L^1(A_i\cap \Omega)} \leq N c\, \varepsilon.
\end{equation*}
 Thus $\overline{BV_c(\Omega)}=BV_0(\Omega)$ in the strong topology of $BV(\O)$. 
\end{proof}

\begin{remark}\label{INC-apr}
By \eqref{six} and the construction of $w$ in the proof of Theorem \ref{muyimportante} above, we see that each $u\in BV_0(\O)$ can be approximated 
by a sequence $\{u_k\}\subset BV_c(\O)$ such that  $u_k=u$ in $\O\setminus N_k$ for a set $N_k=\{x\in \Omega: d(x, \partial\O)\leq \delta(k)\}$ with 
$\delta(k)\rightarrow 0$ as $k\rightarrow +\infty$. Moreover, if $u \geq 0$ then so is  $u_k$  and $u_k \uparrow u$ as $k$ increases to $+\infty$.

\end{remark}

We will also need the following density result.

\begin{lemma}\label{density2}
$BV_0^\infty(\O)$ is dense in $BV_0(\O)$. Likewise, $BV_c^\infty(\O)$ is dense in $BV_c(\O)$, and $BV^\infty(\O)$ is dense in $BV(\O)$ in the strong topology of $BV(\O)$.
\end{lemma}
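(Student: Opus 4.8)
The plan is to approximate each function by its truncations. Given $u$ in the relevant space, set $T_j(s)=\max\{-j,\min\{j,s\}\}$ and $w_j:=u-(T_j\circ u)$ for $j=1,2,\dots$. Since $T_j$ is $1$-Lipschitz with $T_j(0)=0$, the chain rule for $BV$ functions (see \cite[Theorem 3.99]{afp}) gives $T_j\circ u\in BV(\O)$ with $\norm{D(T_j\circ u)}(\O)\le\norm{Du}(\O)$; as $|T_j\circ u|\le j$ one has $T_j\circ u\in BV^\infty(\O)$, and since $\supp(T_j\circ u)\subset\supp(u)$ one has $T_j\circ u\in BV_c^\infty(\O)$ whenever $u\in BV_c(\O)$.

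First I would check the $L^1$ convergence: $|w_j|=(|u|-j)^+\le|u|\in L^1(\O)$ and $w_j\to 0$ a.e., so $\norm{w_j}_{L^1(\O)}\to 0$ by dominated convergence. For the total variation I would apply the coarea formula to $w_j$: one checks that, up to a Lebesgue-null set of values $t$, $\{w_j>t\}=\{u>j+t\}$ for $t>0$ and $\{w_j>t\}=\{u>t-j\}$ for $t<0$, whence
\[
\norm{Dw_j}(\O)=\int_j^\infty P(\{u>s\},\O)\,ds+\int_{-\infty}^{-j}P(\{u>s\},\O)\,ds
=\norm{Du}(\O)-\int_{-j}^{j}P(\{u>s\},\O)\,ds.
\]
Since $\int_{\R}P(\{u>s\},\O)\,ds=\norm{Du}(\O)<\infty$ by the coarea formula, the right-hand side tends to $0$ as $j\to\infty$. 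Combining the two estimates, $T_j\circ u\to u$ in the strong topology of $BV(\O)$, which proves that $BV^\infty(\O)$ is dense in $BV(\O)$ and that $BV_c^\infty(\O)$ is dense in $BV_c(\O)$.

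For the first assertion I would note that truncation is compatible with the trace: since $T_j$ is $1$-Lipschitz, for $\H^{n-1}$-a.e.\ $x\in\partial\O$
\[
r^{-n}\int_{B(x,r)\cap\O}|T_j(u(y))-T_j(\gamma_0 u(x))|\,dy\ \le\ r^{-n}\int_{B(x,r)\cap\O}|u(y)-\gamma_0 u(x)|\,dy\ \longrightarrow\ 0 \quad (r\to 0),
\]
so by uniqueness of the trace $\gamma_0(T_j\circ u)=T_j\circ\gamma_0 u$; hence $\gamma_0 u=0$ implies $\gamma_0(T_j\circ u)=T_j(0)=0$, i.e.\ $T_j\circ u\in BV_0^\infty(\O)$, and the approximation above applies unchanged. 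Alternatively, one can bypass this step entirely: $BV_c(\O)$ is dense in $BV_0(\O)$ by Theorem~\ref{muyimportante} and $BV_c^\infty(\O)$ is dense in $BV_c(\O)$ by the case just treated, and since $BV_c^\infty(\O)\subset BV_0^\infty(\O)$, taking closures gives $\overline{BV_0^\infty(\O)}\supset\overline{BV_c^\infty(\O)}\supset BV_0(\O)$.

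The argument is largely routine; the only points requiring a little care are the coarea bookkeeping identifying $\norm{Dw_j}(\O)$ as $\norm{Du}(\O)$ minus a truncated integral (so that convergence to $0$ is immediate from $\int_{\R}P(\{u>s\},\O)\,ds<\infty$), and the elementary verification that the trace operator commutes with truncation needed in the $BV_0(\O)$ case.
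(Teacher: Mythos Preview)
Your proof is correct and follows essentially the same approach as the paper: approximate $u$ by its truncations and use the coarea formula to control the total variation of the remainder. The paper first reduces to nonnegative $u$ by splitting into positive and negative parts and then truncates one-sidedly, whereas you handle the two-sided truncation $T_j\circ u$ directly; your coarea bookkeeping $\norm{Dw_j}(\O)=\norm{Du}(\O)-\int_{-j}^{j}P(\{u>s\},\O)\,ds$ is correct and gives a slightly cleaner argument. You are also more explicit than the paper in checking that truncation preserves the zero-trace condition (the paper asserts $f_j-g_j\in BV_0(\O)$ without comment), and your alternative route via Theorem~\ref{muyimportante} is legitimate since that theorem precedes this lemma.
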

\begin{proof} We shall only prove the first statement as the others  can be shown in a similar way.
Let $u\in BV_0^+(\Omega)$ and define
\begin{equation*}
%\label{cortar}
u_j:=u\wedge j, \quad j=1,2, \dots
\end{equation*}
Obviously, $u_j \to u$ in $L^1(\O)$. We will now show that $\norm{D(u-u_j)}(\O)\to 0$. The coarea formula yields
\begin{eqnarray*}
\int_{\O}|D(u-u_j)|&=&\int_0^\infty \mathcal{H}^{n-1}(\O \cap \partial^*\{u-u_j>t\})dt\\
&=&\int_0^\infty \mathcal{H}^{n-1}(\O \cap \partial^*\{u-j>t\})dt\\
&=&\int_0^\infty \mathcal{H}^{n-1}(\O \cap \partial^*\{u>j+t\})dt\\
&=&\int_j^\infty \mathcal{H}^{n-1}(\O \cap \partial^*\{u>s\})ds.
\end{eqnarray*}
Since $\int_0^\infty\mathcal{H}^{n-1}(\O \cap \partial^*\{u>s\})ds < \infty$, the Lebesgue dominated convergence theorem implies that
\begin{equation}
\label{lunes}
\int_{\O}|D(u-u_j)|\to 0 \textnormal{ as } j \to \infty.
\end{equation}
If $u\in BV_0(\O)$, we write $u=u^+-u^-$ and define $f_j=u^+\wedge j$ and $g_j=u^-\wedge j$. Thus $f_j-g_j\in BV_0(\O)$ and 
\begin{eqnarray*}
\int_{\O}|D(u-(f_j-g_j))|&=&\int_{\O}|D u^+-D u^--D f_j + D g_j|\\
&\leq& \int_{\O}|D(u^+-f_j)|+\int_{\O}|D(u^--g_j)|\\
&\to& 0 \textnormal{ as } j \to \infty,
\end{eqnarray*}
due to \eqref{lunes}. That completes the proof of the lemma.
\end{proof}

We are now ready to prove the main theorem of this section that makes precise the definition of the space of functions of bounded variation in $\O$ with zero trace on the boundary of $\O$.

\begin{theorem} \label{sonlomismo}
 $\mathbb{BV}_0(\O)=BV_0(\O)$.
\end{theorem}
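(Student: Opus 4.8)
The plan is to prove the two inclusions separately. The inclusion $\mathbb{BV}_0(\O)\subseteq BV_0(\O)$ is the easy direction: given $u\in\mathbb{BV}_0(\O)$, choose $u_k\in C_c^\infty(\O)$ converging to $u$ in the intermediate sense. Each $u_k$ vanishes in a neighborhood of $\partial\O$, so $\gamma_0(u_k)=0$, and since by Theorem~\ref{continuityoftrace} the trace operator is continuous from $BV(\O)$ with the intermediate convergence into $L^1(\partial\O)$ with the strong convergence, we get $\gamma_0(u)=\lim_{k}\gamma_0(u_k)=0$; hence $u\in\ker(\gamma_0)=BV_0(\O)$.

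For the reverse inclusion $BV_0(\O)\subseteq\mathbb{BV}_0(\O)$, I would start from Theorem~\ref{muyimportante}: given $u\in BV_0(\O)$, it provides a sequence $v_j\in BV_c(\O)$ with $v_j\to u$ strongly in $BV(\O)$, in particular $v_j\to u$ in $L^1(\O)$ and $\int_\O|Dv_j|\to\int_\O|Du|$. Then I would approximate each $v_j$ by functions in $C_c^\infty(\O)$ by mollification: extending $v_j$ by zero to $\R^n$ one has $v_j\in BV(\R^n)$ with $\|Dv_j\|(\R^n)=\|Dv_j\|(\O)$, and for $\varepsilon$ smaller than the distance from $\tn{supp}(v_j)$ to $\partial\O$ the mollification $\rho_\varepsilon*v_j$ lies in $C_c^\infty(\O)$. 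Using $D(\rho_\varepsilon*v_j)=\rho_\varepsilon*(Dv_j)$ and the elementary bound $\int_{\R^n}|\rho_\varepsilon*\nu|\,dx\le|\nu|(\R^n)$ for a finite measure $\nu$, we get $\int_\O|D(\rho_\varepsilon*v_j)|\,dx\le\|Dv_j\|(\O)$; on the other hand $\rho_\varepsilon*v_j\to v_j$ in $L^1$, so lower semicontinuity of the total variation forces $\liminf_{\varepsilon\to0^+}\int_\O|D(\rho_\varepsilon*v_j)|\,dx\ge\|Dv_j\|(\O)$, and therefore $\rho_\varepsilon*v_j\to v_j$ in the intermediate sense as $\varepsilon\to0^+$. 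A diagonal choice $\varepsilon=\varepsilon(j)\to0$ with $\|\rho_{\varepsilon(j)}*v_j-v_j\|_{L^1(\O)}\le1/j$ and $\bigl|\int_\O|D(\rho_{\varepsilon(j)}*v_j)|-\int_\O|Dv_j|\bigr|\le1/j$ then gives $\psi_j:=\rho_{\varepsilon(j)}*v_j\in C_c^\infty(\O)$ with $\psi_j\to u$ in $L^1(\O)$ and $\int_\O|D\psi_j|\to\int_\O|Du|$, i.e. $\psi_j\to u$ in the intermediate sense, so $u\in\overline{C_c^\infty(\O)}=\mathbb{BV}_0(\O)$.

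The only real obstacle, and the reason a diagonal argument is needed, is that one cannot shortcut the second part by claiming that $C_c^\infty(\O)$ is \emph{strongly} dense in $BV_c(\O)$: mollification of a $BV_c$ function yields convergence in $L^1$ together with convergence of the total variations, but the gradients $D(\rho_\varepsilon*v)$ converge to $Dv$ only in the weak-$*$ sense of measures, not in total variation norm, so one does not obtain strong $BV$ convergence. Consequently the whole second half has to be carried out in the intermediate topology, splicing the strong approximation of Theorem~\ref{muyimportante} with the intermediate approximation by mollification; everything else (preservation of compact support under mollification for small $\varepsilon$, the $L^1$ convergence, and the total-variation estimate) is routine.
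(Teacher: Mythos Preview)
Your proof is correct and follows essentially the same route as the paper: the first inclusion via continuity of the trace under intermediate convergence (Theorem~\ref{continuityoftrace}), and the second via the strong approximation by $BV_c(\O)$ functions from Theorem~\ref{muyimportante} followed by mollification and a diagonal choice of $\varepsilon_j$. Your added justification for $\int_\O|D(\rho_\varepsilon*v_j)|\to\|Dv_j\|(\O)$ via the convolution upper bound and lower semicontinuity, and your closing remark on why strong density fails, are more explicit than the paper but do not change the underlying argument.
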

\begin{proof}
 Let 
 Let $u\in \mathbb{BV}_0(\O)$. Then Definition \ref{defdos} implies the existence of a sequence $\{u_k\} \in C^\infty_c(\O)$ such that
\begin{equation*}
u_k\to u \quad \tn{in }\ L^1(\O) \tn{  and } \int_\O|Du_k|\to \int_\O|Du|.
\end{equation*}
Since $u_k\in C^\infty_c(\O)$, we have  $\gamma_0(u_k)\equiv 0$. Then Theorem \ref{continuityoftrace} yields
\begin{equation*}
 \gamma_0(u_k)\to \gamma(u) \quad {\rm  in~} L^1(\partial\Omega),
 \end{equation*}
 and so
 \begin{equation*}
 \gamma(u)=0 \quad \tn{ and ~ } u\in BV_0(\O).
 \end{equation*}
 In the other direction, let $u \in BV_0(\O)$. Then, from Theorem \ref{muyimportante} there exists a sequence $u_k \in BV_c(\O)$ such that
\begin{equation}
\label{papitas1}
\lim_{k \to \infty} \int_{\O}|u_k -u|=
 \lim_{k \to \infty} \int_{\O}|Du_k - Du|=0.
 \end{equation}
 Given a sequence $\varepsilon_k \to 0$, we consider the sequence of mollifications
\begin{equation*}
w_k:= u_k * \rho_{\varepsilon_k}.
\end{equation*}
We can choose $\varepsilon_k$ sufficiently small to have
\begin{equation*}
w_k \in C_c^{\infty}(\O).
\end{equation*}
Also, for each $k$,
\begin{equation*}
\lim_{\varepsilon \to 0} \int_{\O} |D(u_k*\rho_{\varepsilon})| = \int_{\O}|Du_k|,
\end{equation*}
and
\begin{equation*}
\lim_{\varepsilon \to 0} \int_{\O} |u_k*\rho_{\varepsilon}-u_k| =0.
\end{equation*}
Thus we can choose $\varepsilon_k$ small enough so that, for each $k$,
\begin{equation}
\label{papitas2}
  \left| \int_{\O} |D(u_k*\rho_{\varepsilon_k})| - \int_{\O}|Du_k|\right |\leq \frac{1}{k},
 \end{equation}
and 
\begin{equation}
\label{papitas22}
 \int_{\O} |u_k*\rho_{\varepsilon_k}-u_k| \leq \frac{1}{k}.  
\end{equation}
Using \eqref{papitas22} and \eqref{papitas1} we obtain
\begin{equation}
\label{yaaa}
\lim_{k \to \infty} \int_{\O}|w_k -u| \leq \lim_{k \to \infty} \int_{\O}|w_k -u_k| + \lim_{k \to \infty} \int_{\O}|u_k -u|=0.
\end{equation}
Also, letting $k \to \infty$ in \eqref{papitas2} and using \eqref{papitas1}, we obtain
\begin{equation}
\label{yaaaa}
\lim_{k \to \infty} \int_{\O}|D(u_k* \rho_{\varepsilon_k})| = \int_{\O}|Du|.
\end{equation}
From \eqref{yaaa} and \eqref{yaaaa} we conclude that $w_k \to u$ in the intermediate convergence which implies that $u\in \mathbb{BV}_0(\O)$.
\end{proof}

With Theorem \ref{sonlomismo} we can now prove the following Sobolev's inequality for functions in $BV_0(\O)$:

\begin{theorem}
\label{sobolevbv0}
Let $u \in BV_0(\O)$, where $\O$ is a bounded open set with Lipschitz boundary. Then
\begin{equation*}
\norm{u}_{L^{\frac{n}{n-1}}(\O)} \leq C \norm{D u}(\O),
\end{equation*}
for a constant $C=C(n)$.
\end{theorem}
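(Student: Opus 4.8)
The plan is to reduce the desired inequality to the Sobolev inequality on $\R^n$ (Theorem \ref{mesorprendi}) by a density argument, using the identification $BV_0(\O)=\mathbb{BV}_0(\O)$ established in Theorem \ref{sonlomismo}. Given $u\in BV_0(\O)$, that theorem furnishes a sequence $u_k\in C_c^\infty(\O)$ with $u_k\to u$ in $L^1(\O)$ and $\int_\O|Du_k|\to\int_\O|Du|$ (intermediate convergence).

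First I would extend each $u_k$ by zero outside $\O$. Since $u_k$ has compact support in $\O$, the extension $\tilde u_k$ lies in $C_c^\infty(\R^n)\subset BV(\R^n)$, and — crucially — no boundary contribution appears, so $\norm{D\tilde u_k}(\R^n)=\int_\O|Du_k|$. This is exactly where the density of $C_c^\infty(\O)$, equivalently the zero-trace condition, enters: the zero extension of a generic element of $BV(\O)$ would carry an extra trace term on $\partial\O$. Applying \eqref{muyutil} to $\tilde u_k$ gives
\begin{equation*}
\norm{u_k}_{L^{\frac{n}{n-1}}(\O)}=\norm{\tilde u_k}_{L^{\frac{n}{n-1}}(\R^n)}\leq C(n)\,\norm{D\tilde u_k}(\R^n)=C(n)\int_\O|Du_k|.
\end{equation*}

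Then I would pass to the limit. Since $u_k\to u$ in $L^1(\O)$, a subsequence (not relabeled) converges to $u$ pointwise a.e.\ in $\O$, and Fatou's lemma yields
\begin{equation*}
\int_\O|u|^{\frac{n}{n-1}}\,dx\leq\liminf_{k\to\infty}\int_\O|u_k|^{\frac{n}{n-1}}\,dx\leq\liminf_{k\to\infty}\Bigl(C(n)\int_\O|Du_k|\Bigr)^{\frac{n}{n-1}}=\Bigl(C(n)\int_\O|Du|\Bigr)^{\frac{n}{n-1}},
\end{equation*}
where the last equality uses $\int_\O|Du_k|\to\int_\O|Du|$. Raising to the power $\tfrac{n-1}{n}$ gives the claim with the same dimensional constant as in Theorem \ref{mesorprendi}.

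There is no serious obstacle: the argument is entirely soft once Theorem \ref{sonlomismo} is in hand. The only point deserving care is the harmless observation above — that the zero extension of a $C_c^\infty(\O)$ function has no jump across $\partial\O$, so its total variation in $\R^n$ equals that in $\O$ — which is precisely what legitimizes the reduction to the known inequality on $\R^n$.
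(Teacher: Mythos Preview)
Your proof is correct and follows essentially the same route as the paper: both invoke Theorem \ref{sonlomismo} to obtain an approximating sequence $u_k\in C_c^\infty(\O)$, apply the Sobolev inequality on $\R^n$ to each $u_k$, and then pass to the limit via a pointwise-a.e.\ convergent subsequence and Fatou's lemma. The only cosmetic difference is that you spell out the zero-extension step explicitly, whereas the paper treats $u_k\in C_c^\infty(\O)\subset C_c^\infty(\R^n)$ directly.
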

\begin{proof}
The Sobolev inequality  for smooth functions states that
\begin{equation}
\label{arriba1}
\norm{u}_{L^{\frac{n}{n-1}}(\R^n)}\leq C \int_{\R^n}|D u| \tn{  for each } u \in C_c^\infty(\R^n).
\end{equation}
From Theorem \ref{sonlomismo} there exists a sequence $u_k \in C_c^{\infty}(\O)$ such that
\begin{equation}
\label{pompo1}
u_k\to u \quad \tn{in }\ L^1(\O) \tn{  and } \int_\O|Du_k|\to \int_\O|Du|.
\end{equation}
Since $u_k \to u$ in $L^1(\O)$ then there exists a subsequence $\{u_{k_j}\}$ of $\{u_k\}$ such that
\begin{equation*}
u_{k_j} (x) \to u(x) \tn{ for a.e. } x \in \O.
\end{equation*}
%Thus,
%\begin{equation*}
%|u_{k_j} (x)|^{\frac{n}{n-1} }\to |u(x)|^{\frac{n}{n-1}} \tn{ for a.e. } x \in \O.
%\end{equation*}
Using Fatou's Lemma and \eqref{arriba1}, we obtain
\begin{equation}
\label{gracias}
\int_{\O}|u|^{\frac{n}{n-1}} \leq  \liminf_{j \to \infty} \int_{\O} |u_{k_j}|^{\frac{n}{n-1}}   \leq \liminf_{j \to \infty} 
\left(C \int_{\O} |D u_{k_j}| \right)^{\frac{n}{n-1}}.
\end{equation}
Finally,  using \eqref{pompo1} in \eqref{gracias} we conclude
\begin{equation*}
\left(\int_{\O}|u|^{\frac{n}{n-1}}\right)^{\frac{n-1}{n}}  \leq C\int_{\O}|Du|.
\end{equation*} 

\begin{comment}
and since $\O$ is bounded we obtain from \eqref{arriba} and  for each function $u_k$,
\begin{equation}
\label{abajo}
\norm{u_k}_{L^1(\O)} \leq C\norm{u_k}_{L^{\frac{n}{n-1}}(\O)}\leq C \int_{\O}|D u_k|.
\end{equation}
Letting $k \to \infty$ in \eqref{abajo} and using \eqref{pompo1} we conclude
\begin{equation*}
\norm{u}_{L^1(\O)} \leq C \int_{\O}|D u|.
\end{equation*}
\end{comment}
\end{proof}

By Theorem \ref{sobolevbv0}, we see that $\norm{u}_{BV(\O)}$ is equivalent to $\norm{Du}(\O)$ whenever $u\in BV_0(\O)$ (or $\mathbb{BV}_0(\O)$) and $\O$ is a bounded Lipschitz domain. Thus, for the rest of the paper we will equip $BV_0(\O)$ with the homogeneous norm:
$$\norm{u}_{BV_0(\O)}=\norm{Du}(\O).$$ 

From Theorem \ref{muyimportante} and Lemma \ref{density2} we obtain

\begin{corollary}
\label{density3}
Let $\O$ be any bounded open set with Lipschitz boundary. Then $BV_c^\infty(\O)$ is dense in $BV_0(\O)$.
\end{corollary}

%%%%%%%%%%%%%%%%%%%%%%%%%%%%%%%%%%%%%%%%%%%%%%%%%%%%%%%%%%%%%%%%%%%%%%%%%%
\section{Characterizations of  measures in $BV_0(\Omega)^*$}
 First, as in the case of $\R^n$, we make precise the definitions of  measures in the spaces $W_0^{1,1}(\O)^*$ and $BV_{0}(\O)^*$.
\begin{definition}
 For a bounded open set $\O$ with Lipschitz boundary, we let
\begin{equation*}
\mathcal{M}_{loc}(\O) \cap W_0^{1,1}(\O)^*:= \{T \in W_0^{1,1}(\O)^* : T(\varphi)=\int_{\O} \varphi d\mu \text{ for some } \mu \in \mathcal{M}_{loc}(\O), \forall \varphi \in C_c^{\infty}(\O)\}. 
\end{equation*}
Therefore, if $\mu \in \mathcal{M}_{loc}(\O) \cap W_0^{1,1}(\O)^*$, then the action $<\mu,u>$ can be uniquely defined for all $u \in W_0^{1,1}(\O)$ (because of the density of $C_c^{\infty}(\O)$ in $W_0^{1,1}(\O))$. 
\end{definition}

\begin{definition}
For a bounded open set $\O$ with Lipschitz boundary, we let
\begin{equation*}
\mathcal{M}_{loc}(\O) \cap BV_{0}(\O)^*:=\{T \in BV_{0}(\O)^*: T(\varphi)=\int_{\O} \varphi^* d\mu \text{ for some } \mu \in \mathcal{M}_{loc}(\O), \forall \varphi \in BV_c^{\infty}(\O)\},
\end{equation*} 
where $\varphi^*$ is the precise representative of $\varphi$. Thus, if $\mu \in \mathcal{M}_{loc}(\O) \cap BV_{0}(\O)^*$, then the action $<\mu,u>$ can be uniquely defined for all $u \in BV_{0}(\O)$ (because of the density of $BV_c^{\infty}(\O)$
in $BV_{0}(\O)$ by Corollary \ref{density3}).
\end{definition}

We will use the following characterization of  $W_{0}^{1,1}(\O)^*$ whose proof is completely analogous to that of  Lemma \ref{lemma1}.
\begin{lemma}
\label{lemma2}
Let $\Omega$ be any bounded open set with Lipschitz boundary. The distribution $T$ belongs to $W_{0}^{1,1}(\O)^*$ if and only if $T=\div \FF$ for some vector field $\FF \in L^{\infty}(\O,\R^n)$. Moreover,
\begin{equation*}
 \norm{T}_{W_0^{1,1}(\O)^*}= \min \{ \norm{\FF}_{L^{\infty}(\O,\R^n)} \},
\end{equation*}
where the minimum is taken over all $\FF \in L^{\infty}(\O,\R^n)$ such that $\div \FF =T$. Here we use the norm
$$\norm{\FF}_{L^{\infty}(\O,\R^n)}:=  \norm{(F_1^2 + F_2^2 + \cdots + F_n^2)^{1/2}}_{L^{\infty}(\O)} \text{ for } \FF=(F_1, \dots, F_n).$$
\end{lemma}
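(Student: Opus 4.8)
The plan is to mirror the proof of Lemma~\ref{lemma1} essentially verbatim, replacing the ambient space $\R^n$ by the bounded Lipschitz domain $\O$ and the homogeneous Sobolev space $\dot{W}^{1,1}(\R^n)$ by $W_0^{1,1}(\O)$. For the easy direction, if $T = \div \FF$ with $\FF \in L^{\infty}(\O,\R^n)$, then for every $\varphi \in C_c^{\infty}(\O)$ one has
\[
|T(\varphi)| = \left| \int_{\O} \FF \cdot D\varphi \, dx \right| \leq \norm{\FF}_{L^{\infty}(\O,\R^n)} \int_{\O} |D\varphi| \, dx = \norm{\FF}_{L^{\infty}(\O,\R^n)} \, \norm{\varphi}_{W_0^{1,1}(\O)};
\]
since $C_c^{\infty}(\O)$ is dense in $W_0^{1,1}(\O)$, Theorem~\ref{BLT} shows that $T \in W_0^{1,1}(\O)^*$ with $\norm{T}_{W_0^{1,1}(\O)^*} \leq \norm{\FF}_{L^{\infty}(\O,\R^n)}$.

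For the converse, let $T \in W_0^{1,1}(\O)^*$. The first thing to record is that, because $\O$ is bounded, the Poincar\'e inequality makes $\int_{\O} |Du| \, dx$ a genuine norm on $W_0^{1,1}(\O)$, equivalent to the full $W^{1,1}$ norm, so $W_0^{1,1}(\O)$ is complete under it. Hence the map
\[
A: W_0^{1,1}(\O) \to L^1(\O,\R^n), \qquad A(u) = Du,
\]
is a linear isometry onto its range $R(A)$, which is therefore a closed subspace of $L^1(\O,\R^n)$. Define $T_1: R(A) \to \R$ by $T_1(Du) = T(u)$; this is well defined and $\norm{T_1}_{R(A)^*} = \norm{T}_{W_0^{1,1}(\O)^*}$. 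By the Hahn--Banach theorem $T_1$ admits a norm-preserving extension $T_2$ to all of $L^1(\O,\R^n)$, and by the Riesz Representation Theorem for vector-valued functions (\cite[pp.~98--100]{DU}) there is $\FF \in L^{\infty}(\O,\R^n)$ with $T_2(v) = \int_{\O} \FF \cdot v \, dx$ for all $v \in L^1(\O,\R^n)$ and $\norm{\FF}_{L^{\infty}(\O,\R^n)} = \norm{T_2} = \norm{T_1}_{R(A)^*} = \norm{T}_{W_0^{1,1}(\O)^*}$. In particular, for each $\varphi \in C_c^{\infty}(\O)$ we get $T(\varphi) = T_1(D\varphi) = T_2(D\varphi) = \int_{\O} \FF \cdot D\varphi \, dx$, i.e. $T = \div(-\FF)$, with $\norm{-\FF}_{L^{\infty}(\O,\R^n)} = \norm{T}_{W_0^{1,1}(\O)^*}$. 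Combining the two directions yields the stated identity for $\norm{T}_{W_0^{1,1}(\O)^*}$ as a minimum.

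The only point that is not a literal copy of the proof of Lemma~\ref{lemma1} is the verification that $\int_{\O}|Du|\,dx$ is actually a norm under which $W_0^{1,1}(\O)$ is complete, so that $R(A)$ is closed; this is precisely where boundedness of $\O$ enters, via Poincar\'e's inequality, and it is already implicit in the remark (after Theorem~\ref{sobolevbv0}) that the homogeneous and full norms are equivalent on the zero-trace space. I do not anticipate any genuine obstacle beyond this bookkeeping; once closedness of $R(A)$ is in hand, the Hahn--Banach/Riesz argument goes through unchanged.
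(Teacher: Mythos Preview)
Your proposal is correct and is exactly the approach the paper takes: the paper states that the proof of Lemma~\ref{lemma2} ``is completely analogous to that of Lemma~\ref{lemma1}'' and omits it, and your write-up supplies precisely that analogous argument, including the one new ingredient (Poincar\'e on the zero-trace space to guarantee that $R(A)$ is closed).
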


We are now ready to state the main result of this section.
\begin{theorem}
\label{superestrellabounded}
Let $\Omega$ be any bounded open set with Lipschitz boundary  and $\mu\in \mathcal{M}_{loc}(\Omega)$. Then, the following are equivalent:

{\bf (i)} There exists a vector field $\FF\in L^\infty (\O, \R^n)$ such that $\div \FF=\mu$.

{\bf (ii)} $|\mu(U)| \leq C\, \mathcal{H}^{n-1}(\partial U)$ for any smooth open (or closed) set $U \subset  \subset \Omega$ with $\mathcal{H}^{n-1}(\partial U) < +\infty$.

 {\rm {\bf(iii)}} $\H^{n-1}(A)=0$ implies $\norm{\mu}(A)=0$ for all Borel sets $A  \subset \Omega$ and there is a constant $C$ such that, for all $ u \in BV_c^\infty(\O)$,
 $$
|<\mu,u>|:=\left|\int_{\O} u^*d\mu\right| \leq C\int_{\O}| D u|, 
$$
where $u^*$ is the representative in the class of $u$ that is defined
$\H^{n-1}$-almost everywhere.

{\rm {\bf(iv)}}  $\mu \in BV_0(\O)^*$. The action of $\mu$ on any $u \in BV_0(\O)$
is defined (uniquely) as
$$ 
<\mu, u>:= \lim_{k \to \infty} <\mu,u_k>= \lim_{k \to \infty} \int_{\O} u_{k}^*d\mu,
$$
where $u_k \in BV_c^\infty(\O)$ 
converges to $u$ in $BV_0(\O)$. In particular, if $u\in BV_c^\infty(\O)$ then
$$<\mu, u>=\int_\O u^* d\mu,$$
and moreover, if $\mu$ is a non-negative measure then,
for all $u \in BV_0(\O)$,
$$
<\mu,u>=\int_{\O} u^*d\mu.
$$
\end{theorem}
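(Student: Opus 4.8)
The plan is to prove the chain {\bf (i)} $\Rightarrow$ {\bf (ii)} $\Rightarrow$ {\bf (iii)} $\Rightarrow$ {\bf (iv)} $\Rightarrow$ {\bf (i)}, following closely the proof of Theorem~\ref{superestrella} but keeping careful track of the fact that every competitor set and every test function appearing in the argument is compactly contained in $\O$, which is precisely what makes the integrations by parts against a field $\FF\in L^{\infty}(\O,\R^n)$ legitimate. For {\bf (i)} $\Rightarrow$ {\bf (ii)}, given a smooth $U\subset\subset\O$ I would mollify $\chi_U$ to $u_k=\chi_U*\rho_{1/k}$, which lies in $C_c^{\infty}(\O)$ once $k$ is large; since $\partial U$ is smooth, $u_k\to\chi_U^*$ pointwise everywhere, and passing to the limit in $\int_\O\FF\cdot Du_k\,dx=-\int_\O u_k\,d\mu$, together with $\int_\O|D\chi_U|=\H^{n-1}(\partial U)$, gives $|\mu({\rm Int}(U))+\frac12\mu(\partial U)|\le\norm{\FF}_{\infty}\H^{n-1}(\partial U)$. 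For the closed set $K=\overline U\subset\subset\O$, the Lipschitz cut-offs $F_h=1-\min\{d_K,h\}/h$ have compact support in $\O$ for $h$ small, and the coarea formula for the distance function $d_K$ yields $|\mu(\overline U)|\le\norm{\FF}_{\infty}\H^{n-1}(\partial U)$; combining the two estimates controls $|\mu(\partial U)|$ and then $|\mu({\rm Int}(U))|$, with constant $\norm{\FF}_{\infty}$ for closed sets and $3\norm{\FF}_{\infty}$ for open ones.

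For {\bf (ii)} $\Rightarrow$ {\bf (iii)} I observe that hypothesis {\bf (ii)} is precisely the hypothesis of Theorem~\ref{pmcondicion}, so Corollary~\ref{pmcondition2} already gives $\norm{\mu}\ll\H^{n-1}$ in $\O$. For $u\in BV_c^{\infty}(\O)$ the mollification $\rho_\ve*u$ belongs to $C_c^{\infty}(\O)$ for $\ve$ small; applying Lemma~\ref{laformula} to its super- and sublevel sets (compactly contained in $\O$, with smooth boundary for a.e.\ level), then {\bf (ii)} and the coarea formula, yields $\bigl|\int_\O\rho_\ve*u\,d\mu\bigr|\le C\int_\O|D(\rho_\ve*u)|\le C\int_\O|Du|$, and letting $\ve\to 0$ with $\rho_\ve*u\to u^*$ $\H^{n-1}$-a.e.\ and dominated convergence (legitimate since $u$ is bounded and $\norm{\mu}\ll\H^{n-1}$) finishes this implication with the same constant.

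For {\bf (iii)} $\Rightarrow$ {\bf (iv)}, the functional $T(u)=\int_\O u^*\,d\mu$ is bounded on $BV_c^{\infty}(\O)$, which by Corollary~\ref{density3} is dense in $BV_0(\O)$, so Theorem~\ref{BLT} produces the unique bounded extension, and the case $u\in BV_c^{\infty}(\O)$ is simply the definition of $T$. When $\mu\ge0$, I would take $u\in BV_0(\O)$, note that $u^{+}$ and $u^{-}$ belong to $BV_0(\O)$ (they are in $BV(\O)$ with $\norm{Du^{\pm}}(\O)\le\norm{Du}(\O)$, and from the averaged characterization of the trace the trace of $u^{\pm}$ is the positive/negative part of $\gamma_0(u)$, hence zero), and then approximate $u^{+}$ from below using Remark~\ref{INC-apr}: a sequence $v_k\uparrow u^{+}$ in $BV_c(\O)$ with $v_k\ge0$, followed by truncations $v_k\wedge j\in BV_c^{\infty}(\O)$. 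Evaluating $T(v_k\wedge j)=\int_\O(v_k\wedge j)^*\,d\mu$ and letting first $j\to\infty$ (using Lemma~\ref{density2} for the convergence in $BV_0(\O)$ and the monotone convergence theorem against $\mu$) and then $k\to\infty$ (using Remark~\ref{INC-apr}/Theorem~\ref{muyimportante} and monotone convergence again), the continuity of $T$ gives $T(u^{+})=\int_\O (u^*)^{+}\,d\mu$, and the same for $u^{-}$, whence $T(u)=\int_\O u^*\,d\mu$ by linearity.

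Finally, for {\bf (iv)} $\Rightarrow$ {\bf (i)}, since $W_0^{1,1}(\O)$ continuously embeds in $BV_0(\O)$ (indeed isometrically in the homogeneous norms, because $Du\in L^1$ there), the restriction $\tilde\mu:=\mu\rtangle W_0^{1,1}(\O)$ lies in $W_0^{1,1}(\O)^*$, so Lemma~\ref{lemma2} furnishes $\FF\in L^{\infty}(\O,\R^n)$ with $\div\FF=\tilde\mu$; since $C_c^{\infty}(\O)\subset W_0^{1,1}(\O)$, this means $\div\FF=\mu$ in the sense of distributions. The step I expect to be the main obstacle is the non-negative case of {\bf (iii)} $\Rightarrow$ {\bf (iv)}: one must be sure that the monotone cut-off approximation of $u\in BV_0(\O)$ can genuinely be carried out inside $BV_c^{\infty}(\O)$ — which is exactly where Remark~\ref{INC-apr} and Lemma~\ref{density2}, hence the full machinery of Theorem~\ref{muyimportante}, become indispensable — and that the precise representative is compatible with truncation and with splitting into positive and negative parts, so that interchanging the $j$- and $k$-limits with both $T$ and the integral against $\mu$ is justified.
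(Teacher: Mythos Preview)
Your proposal is correct and follows essentially the same approach as the paper's own proof: the same mollification and distance-function cut-off argument for {\bf (i)} $\Rightarrow$ {\bf (ii)}, the same use of Corollary~\ref{pmcondition2}, Lemma~\ref{laformula}, and coarea for {\bf (ii)} $\Rightarrow$ {\bf (iii)}, extension by density via Corollary~\ref{density3} for {\bf (iii)} $\Rightarrow$ {\bf (iv)} (with the monotone approximation from Remark~\ref{INC-apr} and truncation from Lemma~\ref{density2} in the non-negative case), and restriction to $W_0^{1,1}(\O)$ plus Lemma~\ref{lemma2} for {\bf (iv)} $\Rightarrow$ {\bf (i)}. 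The only cosmetic difference is that the paper works directly with $(u^*)^+$ and $(u^*)^-$ rather than first arguing that $u^{\pm}\in BV_0(\O)$, but this is immaterial.
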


\begin{proof}
Suppose {\bf(i)} holds. Then for every $\varphi \in C_c^{\infty}(\Omega)$ we have
 \begin{equation*}
% \label{manita}
 \int_{\O} \FF \cdot D \varphi dx = -\int_{\O} \varphi d\mu.
 \end{equation*}
 Let $U \subset \subset \Omega$ be any open (or closed) set with smooth boundary satisfying $\H^{n-1}(\partial U) < \infty$. We proceed as in Theorem \ref{superestrella} and consider the characteristic function $\chi_U$ and the sequence $u_k:= \chi_U * \rho_{1/k}$. Since $U$ is strictly contained in $\O$, for $k$ large enough, the support of $\{u_k\}$ are contained in $\O$. We can then proceed exactly as in Theorem \ref{superestrella} to conclude that 
\begin{equation*} %\label{mismedidas}
|\mu(U)|\le C\, {\mathcal H}^{n-1}(\partial U),
\end{equation*}
where $C=\norm{\FF}_{L^\infty(\O)}$ for closed sets $U$ and  $C=3\norm{\FF}_{L^\infty(\O)}$ for open sets $U$.

If $\mu$ satisfies {\bf (ii)} with a constant $C>0$, then Corollary \ref{pmcondition2} implies that $\norm{\mu}< <\H^{n-1}$. 
 We let $u \in BV_c^\infty(\O)$ and $\{\rho_\ve\}$ be a
standard sequence of mollifiers. Consider  the convolution $\rho_\ve * u$ and note that $\rho_\ve * u \in C_c^\infty(\O)$, for $\varepsilon$ small enough.
Then as in the proof of Theorem \ref{superestrella} we have, for $\varepsilon$ small enough,
\begin{equation*} %\label{fe}
\left|\int_{\O}\rho_\ve*u d\mu\right| \leq 
 C \int_{\O}|D u|.
\end{equation*}
Sending $\ve$ to zero and using the dominated convergence theorem yield
\begin{equation*}
\left|\int_{\O}u^* d\mu\right| \leq C\int_{\O}|D u|,
\end{equation*}
with the same constant $C$ as in {\bf (ii)}. This gives  {\bf (ii)} $\Rightarrow$ {\bf (iii)}.

 From {\bf(iii)} we obtain that the linear operator
\begin{equation}\label{rep-for-b}
 T(u):= <\mu,u>=\int_{\O}u^* d\mu, \quad u \in BV_c^\infty(\O)
\end{equation}
is continuous and hence it can be uniquely extended, since 
$BV_c^\infty(\O)$ is dense in $BV_0(\O)$ (Corollary \ref{density3}), 
to the space $BV_0(\O)$.

Assume now that $\mu$ is non-negative. We take
$u \in BV_0(\O)$ and consider the positive and negative parts $(u^*)^+$
and $(u^*)^-$ of the representative $u^*$. By Remark \ref{INC-apr}, there is an increasing sequence of nonnegative functions $\{v_k\}\subset
BV_c(\O)$  that converges to $(u^{*})^{+}$  pointwise  and in the $BV_0$ norm. Therefore, using \eqref{rep-for-b} we have
$$T(v_k \wedge j) =\int_\O v_k \wedge j d\mu, \quad j=1,2, \dots$$
We first send $j$ to infinity and then $k$ to infinity. Using the continuity of $T$, \eqref{lunes},  and the monotone convergence theorem we get
$$
T((u^*)^+)= \int_{\O}(u^*)^+ d\mu.
$$
We proceed in the same way for  $(u^*)^-$ and thus by linearity we conclude
$$
T(u)=T((u^*)^+)-T((u^*)^-)=\int_{\O}(u^*)^+ -(u^*)^- d\mu=\int_{\O}u^* d\mu.
$$

%Since $BV_0^\infty(\Omega)$ is dense in $BV_0(\Omega)$ (see Lemma \ref{density2}) we can extend %\eqref{aqui} to conclude that
%\begin{equation}
%\mu \in BV_0(\O)^*
%\end{equation}
Finally, to prove that {\bf (iv)} implies {\bf (i)} we take $\mu \in BV_0(\O)^*$. Since $W_0^{1,1}(\O)\subset BV_0(\Omega)$ then
\begin{equation*}
\tilde{\mu}:= \mu \rtangle W_0^{1,1}(\O) \in W_0^{1,1}(\O)^*, 
\end{equation*}        
and therefore Lemma \ref{lemma2} implies that there exists $\FF \in L^\infty(\O,\R^n)$ such that $\div \FF =\tilde{\mu}$ and thus, since $C_c^{\infty} \subset W_0^{1,1}(\O)$, we conclude that $\div \FF= \mu$  in the sense of distributions.
\begin{comment}
\begin{equation}
\label{contenido}
BV_0(\O)^* \subset W_0^{1,1}(\O)^*,
\end{equation}
and hence $\mu \in W_0^{1,1}(\O)^*$. 

Define 
\begin{equation}
A:W_0^{1,1}(\O)\to L^1(\O,\R^n), \quad A(u)=-\nabla u,
\end{equation}
and $\norm{Au}_{L^1(\O)}=\norm{\nabla u}_{L^1(\O)}=\norm{u}_{W_0^{1,1}}(\O)$. Therefore, $A$ is a continuous operator. Moreover, the operator $A$ is 1-1 since $A(u)=0$ implies that $\norm{u}_{W_0^{1,1}}(\O)=0$, and then $u$ is constant with zero trace, which implies $u=0$. Since $A$ is continuos and 1-1 then the adjoint operator is on-to, where
\begin{equation}
A^*:L^{\infty}(\O,\R^n) \to W_0^{1,1}(\O)^*, \quad A^*(F)=\div F.
\end{equation}e $\mu \in W_0^{1,1}(\O)^*$ and $A^*$ is on-to it follows that there exists $F \in L^{\infty}(\O,\R^n)$ such that $\div F=\mu$.
\end{comment}
\end{proof}

\begin{remark} If $\O$ is  a bounded domain containing the origin  then the function $f$  given in Proposition \ref{exam} belongs to $BV_0(\O)^{*}$
but $|f|$ does not. 
\end{remark}

Theorem \ref{superestrellabounded} and Lemma \ref{lemma2} immediately imply the following new result which states that the set of measures in $BV_0(\O)^*$ coincides with that of $W^{1,1}_0(\O)^*$. 
\begin{comment}
First, as in the case of $\R^n$, we make precise the definitions of  measures in the spaces $W_0^{1,1}(\O)^*$ and $BV_{0}(\O)^*$.
\begin{definition}
 For a bounded open set $\O$ with Lipschitz boundary, we let
\begin{equation*}
\mathcal{M}_{loc}(\O) \cap W_0^{1,1}(\O)^*:= \{T \in W_0^{1,1}(\O)^* : T(\varphi)=\int_{\O} \varphi d\mu \text{ for some } \mu \in \mathcal{M}_{loc}(\O), \forall \varphi \in C_c^{\infty}(\O)\}. 
\end{equation*}
Therefore, if $\mu \in \mathcal{M}_{loc}(\O) \cap W_0^{1,1}(\O)^*$, then the action $<\mu,u>$ can be uniquely defined for all $u \in W_0^{1,1}(\O)$ (because of the density of $C_c^{\infty}(\O)$ in $W_0^{1,1}(\O))$. 
\end{definition}

\begin{definition}
For a bounded open set $\O$ with Lipschitz boundary, we let
\begin{equation*}
\mathcal{M}_{loc}(\O) \cap BV_{0}(\O)^*:=\{T \in BV_{0}(\O)^*: T(\varphi)=\int_{\O} \varphi^* d\mu \text{ for some } \mu \in \mathcal{M}_{loc}(\O), \forall \varphi \in BV_c^{\infty}(\O)\},
\end{equation*} 
where $\varphi^*$ is the precise representative of $\varphi$. Thus, if $\mu \in \mathcal{M}_{loc}(\O) \cap BV_{0}(\O)^*$, then the action $<\mu,u>$ can be uniquely defined for all $u \in BV_{0}(\O)$ (because of the density of $BV_c^{\infty}(\O)$
in $BV_{0}(\O)$ by Corollary \ref{density3}).
\end{definition}
\end{comment}

%Theorem \ref{superestrellabounded} implies the following result which states that the measures %in $BV_0(\O)^*$ coincide with the measures in $W^{1,1}_0(\O)^*$:
\begin{theorem}
\label{nuevo1}
The normed spaces  $\M_{loc}(\O) \cap BV_0(\O)^*$ and $\M_{loc}(\O) \cap W^{1,1}_0(\O)^*$ are isometrically isomorphic.
\end{theorem}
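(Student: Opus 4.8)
The plan is to mimic the proof of Theorem \ref{nuevo} essentially verbatim, replacing $BV_{\frac{n}{n-1}}(\R^n)$ by $BV_0(\O)$, the homogeneous Sobolev space $\dot W^{1,1}(\R^n)$ by $W^{1,1}_0(\O)$, and the two structural inputs Lemma \ref{lemma1} and Theorem \ref{superestrella} by their bounded-domain counterparts Lemma \ref{lemma2} and Theorem \ref{superestrellabounded}. Concretely, I would define
$$S:\M_{loc}(\O)\cap BV_0(\O)^*\longrightarrow \M_{loc}(\O)\cap W^{1,1}_0(\O)^*,\qquad S(T)=T\rtangle W^{1,1}_0(\O).$$
First one checks that $S$ is well defined and lands in the claimed space: for $u\in W^{1,1}_0(\O)$ one has $\norm{u}_{W^{1,1}_0(\O)}=\int_\O|Du|\,dx=\norm{Du}(\O)=\norm{u}_{BV_0(\O)}$, so the inclusion $W^{1,1}_0(\O)\hookrightarrow BV_0(\O)$ is isometric and hence $S(T)\in W^{1,1}_0(\O)^*$ with $\norm{S(T)}_{W^{1,1}_0(\O)^*}\le\norm{T}_{BV_0(\O)^*}$. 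Moreover, if $\mu\in\M_{loc}(\O)$ is the measure that represents $T$ on $BV_c^\infty(\O)$, then $\mu$ also represents $S(T)$ on $C_c^\infty(\O)\subset BV_c^\infty(\O)$, so $S(T)\in\M_{loc}(\O)\cap W^{1,1}_0(\O)^*$. Linearity of $S$ is immediate.

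For injectivity, suppose $S(T)=0$, and let $\mu$ be the measure representing $T$. Then $\int_\O\varphi\,d\mu=T(\varphi)=0$ for every $\varphi\in C_c^\infty(\O)$, so $\mu=0$; consequently $T(u)=\int_\O u^*\,d\mu=0$ for all $u\in BV_c^\infty(\O)$, and since $BV_c^\infty(\O)$ is dense in $BV_0(\O)$ (Corollary \ref{density3}) and $T$ is continuous, Theorem \ref{BLT} forces $T\equiv 0$ on $BV_0(\O)$.

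For surjectivity together with the isometry, take $H\in\M_{loc}(\O)\cap W^{1,1}_0(\O)^*$ with associated $\mu\in\M_{loc}(\O)$. By Lemma \ref{lemma2} there is $\FF\in L^\infty(\O,\R^n)$ with $\div\FF=\mu$ (distributionally, using $C_c^\infty(\O)\subset W^{1,1}_0(\O)$) and $\norm{\FF}_{L^\infty(\O,\R^n)}=\norm{H}_{W^{1,1}_0(\O)^*}$. The chain ${\bf(i)}\Rightarrow{\bf(ii)}\Rightarrow{\bf(iii)}$ of Theorem \ref{superestrellabounded} then gives $\norm{\mu}\ll\H^{n-1}$ together with
$$\left|\int_\O u^*\,d\mu\right|\le\norm{\FF}_{L^\infty(\O,\R^n)}\int_\O|Du|\qquad\text{for all }u\in BV_c^\infty(\O),$$
so $\mu\in BV_c^\infty(\O)^*$ with $\norm{\mu}_{BV_c^\infty(\O)^*}\le\norm{\FF}_{L^\infty(\O,\R^n)}$. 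By Theorem \ref{BLT} and Corollary \ref{density3}, $\mu$ extends uniquely to $\hat\mu\in BV_0(\O)^*$ with $\norm{\hat\mu}_{BV_0(\O)^*}=\norm{\mu}_{BV_c^\infty(\O)^*}$; since $\hat\mu$ and $H$ are continuous on $W^{1,1}_0(\O)$ and agree on the dense subspace $C_c^\infty(\O)$, we get $S(\hat\mu)=H$, so $S$ is onto. Finally, the chain
$$\norm{\hat\mu}_{BV_0(\O)^*}\le\norm{\FF}_{L^\infty(\O,\R^n)}=\norm{H}_{W^{1,1}_0(\O)^*}=\norm{S(\hat\mu)}_{W^{1,1}_0(\O)^*}\le\norm{\hat\mu}_{BV_0(\O)^*}$$
forces equality throughout, and since every $T$ is of the form $\hat\mu$ this shows $S$ is a surjective isometry, hence an isometric isomorphism.

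I do not expect a real obstacle here: the substantive work is carried out in Lemma \ref{lemma2} and, above all, in Theorem \ref{superestrellabounded}, which are already available. The only points needing a little care are (a) recording that $W^{1,1}_0(\O)\hookrightarrow BV_0(\O)$ is an isometric embedding for the homogeneous norms, which makes $S$ norm-nonincreasing for free, and (b) the elementary fact that a bounded functional on $W^{1,1}_0(\O)$ (resp.\ on $BV_0(\O)$) of the relevant type is completely determined by the $\M_{loc}(\O)$-measure representing it on the dense test class $C_c^\infty(\O)$ (resp.\ $BV_c^\infty(\O)$) — this is precisely why the two spaces in the statement are defined via these dense classes rather than a larger one, exactly as in the remark following Definition \ref{panza2}.
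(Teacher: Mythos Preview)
Your proposal is correct and follows exactly the approach indicated in the paper, which explicitly says the proof of Theorem \ref{nuevo1} ``is similar to that of Theorem \ref{nuevo} but this time one uses Theorem \ref{superestrellabounded} and Corollary \ref{density3} in place of Theorem \ref{superestrella} and Theorem \ref{density}, respectively'' and omits the details. Your write-up supplies precisely those details, including the care about the sharp constant in the chain ${\bf(i)}\Rightarrow{\bf(ii)}\Rightarrow{\bf(iii)}$ that makes the map $S$ an isometry rather than merely a bounded bijection.
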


The proof of Theorem \ref{nuevo1} is similar to that of Theorem \ref{nuevo} but this time one uses Theorem \ref{superestrellabounded}  and Corollary \ref{density3} in place of Theorem \ref{superestrella} and Theorem \ref{density}, respectively. Thus we shall omit its proof.

%\begin{proof}
%Let $\mu \in \M(\R^n) \cap BV_0(\O)^*$. Then Theorem \ref{superestrellabounded} implies that there exists a vector field $\FF \in L^{\infty}(\O,\R^n)$ such that $\div \FF= \mu$. Thus, Lemma \ref{lemma2} yields, since $A^*:L^\infty(\O,\R^n) \to W^{1,1}_0(\O)^*$ is surjective, that 
%$\mu \in W^{1,1}_0(\O)^*$. Conversely, if $\mu \in W^{1,1}_0(\O)^*$ then by Lemma \ref{lemma2} there exists $\FF \in L^{\infty}(\O,\R^n)$ such that $A^*(\FF)=\div \FF= \mu$, and hence Theorem \ref{superestrellabounded} yields $\mu \in BV_0(\O)^*$
%\end{proof}

\section{Finite measures in $BV(\Omega)^*$}
 In this section, we  characterize all {\it finite} signed measures  that belong to $BV(\O)^*$. Note that the finiteness condition here is necessary at least 
for {\it positive} measures in $BV(\O)^*$. By a measure $\mu\in BV(\O)^*$ we mean that the inequality 
\begin{equation*}
\left | \int_\Omega u^{*} d\mu \right | \leq C \|u\|_{BV(\Omega)}
\end{equation*}
holds for all $u\in BV^\infty(\O)$. By Lemma \ref{density2} we see that such a $\mu$ can be uniquely extended to be a continuous linear  
functional in $BV(\O)$.

We will use the following result, whose proof can be found in \cite[Lemma 5.10.14]{Zie89}:
\begin{lemma}\label{ellema}
Let $\Omega$ be an open set with Lipschitz boundary and $u\in BV(\Omega)$.
Then, the extension of $u$ to $\Bbb R^n$ defined by
\begin{equation*}
u_0(x)=\begin{cases}u(x),&x\in\Omega\\
0,&x\in\Bbb R^n\backslash \Omega \end{cases}
\end{equation*}
satisfies that $u_0\in BV(\Bbb R^n)$ and
\begin{equation*}
\|u_0\|_{BV(\Bbb R^n)}\leq C \|u\|_{BV(\Omega)},
\end{equation*}
where $C=C(\Omega)$.
\end{lemma}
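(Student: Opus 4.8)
The plan is to identify the distributional gradient of the zero-extension $u_0$ explicitly and then read the estimate off from it. The key identity is the Gauss--Green formula for $BV$ functions on a Lipschitz domain: for every $\varphi\in C_c^1(\R^n,\R^n)$,
\begin{equation}
\label{GGsketch}
\int_\O u\,\div\varphi\,dx = -\int_\O \varphi\cdot dDu + \int_{\po\O}(\gamma_0 u)\,(\varphi\cdot\nu)\,d\H^{n-1},
\end{equation}
where $\nu$ is the outward unit normal of $\po\O$ (defined $\H^{n-1}$-a.e.\ since $\po\O$ is Lipschitz) and $\gamma_0 u\in L^1(\po\O)$ is the trace of Section~6. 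To prove \eqref{GGsketch} I would use Theorem~\ref{mainapproximation} to choose $u_k\in C^\infty(\overline{\O})\cap BV(\O)$ converging to $u$ in the intermediate convergence, apply the classical divergence theorem to each $u_k$ (so that \eqref{GGsketch} holds for $u_k$ with $\gamma_0 u_k=u_k|_{\po\O}$), and then pass to the limit: the left-hand side converges because $u_k\to u$ in $L^1(\O)$; the first term on the right converges because the intermediate (strict) convergence $\int_\O|Du_k|\to\int_\O|Du|$ prevents the variation of $Du_k$ from escaping to $\po\O$, so $\int_\O\varphi\cdot dDu_k\to\int_\O\varphi\cdot dDu$; and the boundary term converges by the continuity of the trace operator of Theorem~\ref{continuityoftrace}.

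Granting \eqref{GGsketch}, note that $\int_{\R^n}u_0\,\div\varphi\,dx=\int_\O u\,\div\varphi\,dx$ for every $\varphi\in C_c^1(\R^n,\R^n)$, so \eqref{GGsketch} says precisely that
\begin{equation*}
Du_0 = Du\rtangle\O \;-\; (\gamma_0 u)\,\nu\,\H^{n-1}\rtangle\po\O
\end{equation*}
as finite $\R^n$-valued measures on $\R^n$. Since $\O$ is bounded we also have $u_0\in L^1(\R^n)$ with $\norm{u_0}_{L^1(\R^n)}=\norm{u}_{L^1(\O)}$, hence $u_0\in BV(\R^n)$, and because $|\nu|=1$ on $\po\O$,
\begin{equation*}
\norm{Du_0}(\R^n)\le \norm{Du}(\O) + \int_{\po\O}|\gamma_0 u|\,d\H^{n-1}.
\end{equation*}

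It remains to bound $\int_{\po\O}|\gamma_0 u|\,d\H^{n-1}$ by $C(\O)\,\norm{u}_{BV(\O)}$, i.e.\ to establish the boundedness (not merely the intermediate-to-strong continuity) of $\gamma_0\colon BV(\O)\to L^1(\po\O)$. I would do this by covering $\po\O$ with finitely many bi-Lipschitz boundary charts and a subordinate partition of unity, reducing to the model case of a cylinder $\mathcal{B}_R\times(0,T)$, and there estimating the trace of a smooth approximant $u_k$ by a one-dimensional integral of $|Du_k|$ exactly as in \eqref{importante}--\eqref{mostimportant} in the proof of Theorem~\ref{muyimportante}, before passing to the limit $k\to\infty$. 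Combining this with the two displays above yields $\norm{u_0}_{BV(\R^n)}\le C(\O)\,\norm{u}_{BV(\O)}$, with $C(\O)$ depending only on the Lipschitz constants and the number of charts. The genuine obstacle here is the boundary term: both the appearance of $\gamma_0 u$ in \eqref{GGsketch} and its $L^1(\po\O)$ bound rest on the Lipschitz structure through the boundary charts, whereas the interior limit-passing is routine.
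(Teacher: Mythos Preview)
The paper does not actually prove this lemma: it merely cites \cite[Lemma 5.10.14]{Zie89}. Your proposal is a correct and essentially standard argument, and it matches the approach taken in the references the paper relies on (Ziemer, Giusti, Ambrosio--Fusco--Pallara): identify $Du_0$ as $Du\rtangle\O-(\gamma_0 u)\,\nu\,\H^{n-1}\rtangle\partial\O$ via the Gauss--Green formula for $BV$ on Lipschitz domains, and then invoke the $L^1$ bound on the trace.

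Two small remarks. First, your justification for passing to the limit in $\int_\O\varphi\cdot dDu_k$ is slightly informal: since $\varphi$ need not vanish on $\partial\O$, weak$^*$ convergence of $Du_k$ in $\M(\O)$ alone is not enough; you genuinely need that strict convergence (weak$^*$ plus convergence of total variations) upgrades to convergence against $C(\overline{\O})$ test functions, which is the standard tightness argument. Second, the formulas \eqref{importante}--\eqref{mostimportant} you cite actually run in the opposite direction (bounding interior slices by the trace at height $0$), but the same fundamental-theorem-of-calculus estimate, integrated over $x_n\in(0,T)$, yields $\int_{\mathcal{B}_R}|u_k(x',0)|\,dx'\le T^{-1}\norm{u_k}_{L^1}+\norm{Du_k}(\mathcal{B}_R\times(0,T))$, which is what you need. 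Neither point is a gap; both are routine to fill in.
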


\begin{theorem}
\label{resultforbounded}
Let $\Omega$ be an open set with Lipschitz boundary and let $\mu$ be a finite signed measure in $\O$. Extend $\mu$ by zero to $\R^n \setminus \O$ by setting $\mu(\R^n \setminus \O)=0$.
 Then, $\mu\in BV(\Omega)^*$ if and only if
\begin{equation}\label{condicion}
|\mu (U)|\leq C\, \H^{n-1} (\partial U)
\end{equation}
for every smooth open set $U\subset\Bbb R^n$ and a constant $C=C(\Omega, \mu)$.
\end{theorem}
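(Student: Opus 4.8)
The plan is to deduce both implications from Theorem \ref{superestrella}, applied to the zero--extension $\bar\mu$ of $\mu$, i.e. the finite signed measure on $\R^n$ with $\bar\mu\rtangle\Omega=\mu$ and $\bar\mu(\R^n\setminus\Omega)=0$. Two features of $\bar\mu$ will be used repeatedly: it is carried by the open set $\Omega$ (so $\int_{\R^n}f\,d\bar\mu=\int_\Omega f\,d\bar\mu$ for every Borel $f$, and this is insensitive to how $f$ is modified off $\Omega$), and, because $\Omega$ is bounded, $\bar\mu$ has compact support; moreover $\bar\mu$ is finite. The bridge between $BV(\Omega)$ and $BV(\R^n)$ is Lemma \ref{ellema} in one direction, restriction in the other, together with the Sobolev inequality of Theorem \ref{mesorprendi} to absorb the $L^1$--part of $\|\cdot\|_{BV(\Omega)}$.

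For the implication \eqref{condicion}$\,\Rightarrow\,\mu\in BV(\Omega)^*$: the hypothesis is precisely condition {\bf(ii)} of Theorem \ref{superestrella} for $\bar\mu$ (only bounded open sets are needed in {\bf(ii)}$\Rightarrow${\bf(iii)}), so that theorem gives $\bar\mu\in\mathcal{M}_{loc}\cap BV_{\frac{n}{n-1}}(\R^n)^*$ with $\big|\int_{\R^n}v^*\,d\bar\mu\big|\le C\|Dv\|(\R^n)$ for all $v\in BV_c^\infty(\R^n)$. Given $u\in BV^\infty(\Omega)$, Lemma \ref{ellema} yields that its zero--extension $u_0$ lies in $BV(\R^n)$, is bounded, and satisfies $\|u_0\|_{BV(\R^n)}\le C(\Omega)\|u\|_{BV(\Omega)}$. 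I would then identify the canonical action $\langle\bar\mu,u_0\rangle$ with $\int_\Omega u^*\,d\mu$: writing $u=u^+-u^-$ and approximating the nonnegative $u_0^\pm$ by $(\phi_k u_0^\pm)\wedge j\in BV_c^\infty(\R^n)$ as in Theorem \ref{density} (first $j\to\infty$, then $k\to\infty$), observe that once $\overline\Omega\subset B_k$ and $j\ge\|u\|_{L^\infty}$ these approximants equal $u_0^\pm$ on $\Omega$; since $\bar\mu$ is carried by $\Omega$ and $u_0^*=u^*$ there, the pairings stabilize at $\int_\Omega(u^\pm)^*\,d\mu$, and passing to the limit (using continuity of $\bar\mu$ on $BV_{\frac{n}{n-1}}(\R^n)$) gives $\langle\bar\mu,u_0\rangle=\int_\Omega u^*\,d\mu$. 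Hence
\[
\Big|\int_\Omega u^*\,d\mu\Big|=|\langle\bar\mu,u_0\rangle|\le C\,\|Du_0\|(\R^n)\le C\,C(\Omega)\,\|u\|_{BV(\Omega)},
\]
which is exactly $\mu\in BV(\Omega)^*$.

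For the converse, assume $\mu\in BV(\Omega)^*$. I would first show that $\bar\mu$ is a bounded functional on $BV_{\frac{n}{n-1}}(\R^n)$ in the homogeneous norm. For $v\in BV_c^\infty(\R^n)$ the restriction $v|_\Omega$ lies in $BV^\infty(\Omega)$, its precise representative coincides with $v^*$ on the open set $\Omega$, and therefore $\big|\int_{\R^n}v^*\,d\bar\mu\big|=\big|\int_\Omega (v|_\Omega)^*\,d\mu\big|\le C\,\|v|_\Omega\|_{BV(\Omega)}\le C(\|v\|_{L^1(\Omega)}+\|Dv\|(\R^n))$. Since $|\Omega|<\infty$, Hölder's inequality and Theorem \ref{mesorprendi} give $\|v\|_{L^1(\Omega)}\le|\Omega|^{1/n}\|v\|_{L^{n/(n-1)}(\R^n)}\le C(n)\,|\Omega|^{1/n}\,\|Dv\|(\R^n)$, so $\big|\int_{\R^n}v^*\,d\bar\mu\big|\le C'\|Dv\|(\R^n)$ on the dense subspace $BV_c^\infty(\R^n)$ of $BV_{\frac{n}{n-1}}(\R^n)$ (Theorem \ref{density}); hence $\bar\mu$ extends to an element of $BV_{\frac{n}{n-1}}(\R^n)^*$ represented by the finite measure $\bar\mu$, i.e. condition {\bf(iv)} of Theorem \ref{superestrella} holds for $\bar\mu$. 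Applying {\bf(iv)}$\Rightarrow${\bf(i)}$\Rightarrow${\bf(ii)} of that theorem produces $\FF\in L^\infty(\R^n,\R^n)$ with $\div\FF=\bar\mu$ and $\|\FF\|_\infty\le C'$, and hence \eqref{condicion} for every smooth bounded open set $U\subset\R^n$ with $\mathcal{H}^{n-1}(\partial U)<\infty$, with a constant $C=C(\Omega,\mu)$ depending only on $|\Omega|$ and on the operator norm of $\mu$; when $\mathcal{H}^{n-1}(\partial U)=\infty$ the inequality is vacuous.

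The step I expect to be most delicate is the identification $\langle\bar\mu,u_0\rangle=\int_\Omega u^*\,d\mu$ in the first implication, since $u\in BV^\infty(\Omega)$ need not vanish on $\partial\Omega$ and $u_0$ jumps across $\partial\Omega$. The convenient device is precisely that the approximants of Theorem \ref{density} leave $u_0$ untouched on any fixed ball once $k$ is large, so --- crucially because $\Omega$ is bounded and $\bar\mu$ is carried by $\Omega$ --- no boundary correction terms along $\partial\Omega$ (nor the $\tfrac12\mu(\partial U)$ terms that appear in the proof of Theorem \ref{superestrella}) ever enter. An alternative would be to mollify $u_0$ and run the layer--cake computation of Lemma \ref{laformula} as in {\bf(ii)}$\Rightarrow${\bf(iii)} there, truncating the finite--measure superlevel sets with large balls whose spheres meet them in $\mathcal{H}^{n-1}$--small sets; the first route is cleaner. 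Finiteness of $\mu$ is used to make $\int_\Omega u^*\,d\mu$ meaningful for bounded $u$ and to place $\bar\mu$ in $\mathcal{M}_{loc}(\R^n)$ with compact support, while boundedness of $\Omega$ is used for the Sobolev absorption and, again, for the compact support of $\bar\mu$.
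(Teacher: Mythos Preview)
Your proof is correct and follows essentially the same route as the paper: both directions reduce to Theorem \ref{superestrella} for the zero--extension $\bar\mu$, using Lemma \ref{ellema} to pass from $BV^\infty(\Omega)$ to $BV_c^\infty(\R^n)$ and Theorem \ref{mesorprendi} to absorb the $L^1$--part of the $BV(\Omega)$ norm. The only notable difference is that your identification $\langle\bar\mu,u_0\rangle=\int_\Omega u^*\,d\mu$ via the approximants $(\phi_k u_0^\pm)\wedge j$ is more elaborate than necessary: since $u\in BV^\infty(\Omega)$ and $\Omega$ is bounded, the extension $u_0$ already lies in $BV_c^\infty(\R^n)$, so Theorem \ref{superestrella}{\bf(iii)} gives directly $\langle\bar\mu,u_0\rangle=\int_{\R^n}(u_0)^*\,d\bar\mu$, and this equals $\int_\Omega u^*\,d\mu$ because $\bar\mu$ is carried by the open set $\Omega$, where $(u_0)^*=u^*$. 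The paper takes this shorter route.
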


\begin{proof}Suppose that $\mu\in BV(\Omega)^*$.
Let $u\in BV_c^{\infty}(\Bbb R^n)$ and assume that $u$ is the representative that is defined $\H^{n-1}$-almost everywhere. Consider $v:=u\chi_\Omega$ and note that  $v \rtangle \O \in BV^\infty(\Omega)$ since $Dv$ is a finite vector-measure in $\R^n$  given by 
\begin{equation*}
%\label{elestamal}
Dv=u D\chi_\Omega+\chi_\Omega Du,
\end{equation*}
and therefore,
\begin{eqnarray}
\int_\Omega |Dv|&=&\int_\Omega |u D\chi_\Omega+\chi_\Omega Du|\leq\int_\Omega |u| |D\chi_\Omega| + \int_\Omega |Du|\nonumber\\
&=&\int_\Omega |Du|\leq\int_{\Bbb R^n} |Du| = \|u\|_{BV(\Bbb R^n)} < +\infty.\label{omegita2}
\end{eqnarray}
Since $\mu\in BV(\Omega)^*$ there exists a constant $C=C(\Omega,\mu)$ such that
\begin{equation}\label{omegita1}
\bigg|\int_\Omega v d\mu\bigg|\leq C \|v\|_{BV(\Omega)}.
\end{equation}

Then,
\begin{eqnarray*}
\bigg|\int_{\R^n} u d\mu\bigg|=\bigg|\int_\Omega u d\mu\bigg|&=&\bigg|\int_\Omega v d\mu\bigg|\leq C\, \|v\|_{BV(\Omega)}, \tn{ by } \eqref{omegita1} \\
&=& C\, \norm{v}_{L^1(\O)} + C\, \int_{\O}|Dv| \\
&\leq & C\, \norm{v}_{L^1(\O)} + C\, \int_{\R^n}|Du|  , \tn{ by  } \eqref{omegita2} \\
%&=& \norm{u}_{L^1(\O)} + C\int_{\R^n}|Du| \\
&\leq& C\norm{v}_{L^{\frac{n}{n-1}}(\O)} + C\int_{\R^n}|Du|, \tn{ since } \O \tn{ is bounded} \\
&=& C\norm{u}_{L^{\frac{n}{n-1}}(\R^n)} + C\int_{\R^n}|Du|\\
&\leq&  C\int_{\R^n}|Du|= \norm{u}_{BV(\R^n)}, \tn{ by Theorem }\ref{mesorprendi},
\end{eqnarray*}
which implies that $\mu\in BV(\Bbb R^n)^*$.
Thus, Theorem  \ref{superestrella}  gives 
\begin{equation*}
|\mu (U)|\leq C \H^{n-1} (\partial U)
\end{equation*}
for every open set $U\subset \Bbb R^n$ with smooth boundary.

Conversely, assume that $\mu$ satisfies condition \eqref{condicion}. Then Theorem \ref{superestrella}  yields that $\mu\in BV(\Bbb R^n)^*$.
Let $u \in BV^\infty(\Omega)$ and consider its extension $u_0\in BV(\Bbb R^n)$ as in Lemma \ref{ellema}.
Then, since $u_0\in BV_c^\infty(\R^n)$, there exists a constant $C$ such that
\begin{equation} \label{elcinco}
\left | \int_{\Bbb R^n} (u_0)^* d\mu \right | \leq C \|u_0\|_{BV(\Bbb R^n)}.
\end{equation}

Now, Lemma \ref{ellema} yields $\|u_0\|_{BV(\Bbb R^n)}\leq C\|u\|_{BV(\Omega)}$ and since $u_0\equiv 0$ on $\Bbb R^n\backslash\Omega$ and $u_0\equiv u$ on $\Omega$, we obtain from \eqref{elcinco} the inequality
\begin{equation}
\left | \int_\Omega u^* d\mu \right | \leq C \|u\|_{BV(\Omega)},
\end{equation}
which means that $\mu\in BV(\Omega)^*$.

\end{proof}

\begin{remark} It is easy to see that if $\mu$ is a {\it positive} measure in $BV(\O)^*$ then its action on $BV(\O)$ is given by 
$$<\mu, u>=\int_{\O} u^{*} d\mu$$ 
for all $u\in BV(\O)$.
\end{remark}

\end{document}